\documentclass[11pt]{article}

\usepackage{amsmath}
\usepackage{amsfonts}
\usepackage{amssymb}
\usepackage{amsthm}
\usepackage{graphicx}
\usepackage{color}
\usepackage{enumitem}
\usepackage{float}
\usepackage{hyperref}
\usepackage[top=1.2in,bottom=1.2in,right=1in,left=1in]{geometry}
\usepackage[numbers,square,sort&compress,longnamesfirst]{natbib}
\usepackage{endnotes}
\let\footnote=\endnote

\theoremstyle{plain}
\newtheorem{theorem}{Theorem}[section]
\newtheorem{corollary}[theorem]{Corollary}

\newtheorem{definition}[theorem]{Definition}

\newtheorem{lemma}[theorem]{Lemma}
\newtheorem{proposition}[theorem]{Proposition}
\newtheorem{remark}[theorem]{Remark}

\numberwithin{equation}{section}

\newcommand{\tr}{\operatorname{Tr}}

\newcommand{\R}{\mathbb{R}}

\def\og{\leavevmode\raise.3ex\hbox{$\scriptscriptstyle\langle\!\langle$~}}
\def\fg{\leavevmode\raise.3ex\hbox{~$\!\scriptscriptstyle\,\rangle\!\rangle$}}

\begin{document}

\author{Eberhard Mayerhofer\thanks{Department of Mathematics \& Statistics, University of Limerick, Limerick, V94 T9PX, Ireland; \url{eberhard.mayerhofer@ul.ie }}, Robert Stelzer\thanks{Institute of Mathematical Finance, Ulm University, Helmholtzstrasse 18, 89075 Ulm, Germany; \url{robert.stelzer@uni-ulm.de}} \ and Johanna Vestweber\thanks{Institute of Mathematical Finance, Ulm University, Helmholtzstrasse 18, 89075 Ulm, Germany; \url{johanna2912@googlemail.com}}}

\title{Geometric Ergodicity of Affine Processes on Cones.}
\maketitle

\begin{abstract}

For affine processes on finite-dimensional cones, we give criteria for geometric ergodicity - that is exponentially fast convergence to a unique stationary distribution. Ergodic results include both the existence of exponential moments of the limiting distribution, where we exploit the crucial affine property, and finite moments, where we invoke the polynomial property of affine semigroups. Furthermore, we elaborate sufficient conditions for aperiodicity and irreducibility. Our results are applicable to Wishart processes with jumps on the positive semidefinite matrices, continuous-time branching processes with immigration in high dimensions, and classical term-structure models for credit and interest rate risk.

\end{abstract}

\begin{tabbing}
\emph{AMS Subject Classification 2010: }\\ \emph{Primary: } {60J25} \, \emph{Secondary: } 60G10, 60G51, 60J60, 60J75\end{tabbing}

\vspace{0.3cm}

\noindent\emph{Keywords:} 
Affine process, geometric ergodicity, Feller process, Foster-Lyapunov drift condition, Harris recurrence, irreducibility, L\'evy process, Wishart process.

\section{Introduction}\label{sec: intro}

Affine processes are very popular, due to their flexibility and high analytical tractability: While they enjoy all classical features of general Markov processes in continuous time (e.g., state-dependent diffusive and jump behaviour), the Backward Kolmogorov PDEs simplify to non-linear ODEs for exponential initial data. On the canonical state spaces $\mathbb R_+^m\times\mathbb R^n$ they have been fully characterized by Duffie, Filipovi\'c and Schachermayer, in the seminal paper \cite{duffie2003}. Particular affine models are well-known from financial applications (see e.g. \cite{bns2001,duffieetal2000,dai2000,heston1993closed,CIR1985}). They have been generalized to the time inhomogeneous case \cite{filipovic2005} and to other state spaces \cite{cuchiero2016affine,cuchieroteichmann2013,kellerresseletal2013,cuchiero,SpreijVeerman2012}. Of particular interest as state space are cones, especially the cone of positive semi-definite matrices (cf.~\cite{cuchiero,Alfonsi2015,Mayerhofer2009,mayerhofer2012b}). Such processes are used to model financial markets, where several assets
exhibit stochastic (co)volatilities. The typical models derive stochastic volatility from the positive-definite Wishart process (see e.g. \cite{bru,mayerhofer2012,donati2008,donatietal2004,graczyk2006,AlfonsiKebaierRey2017,BaldeauxPlaten2013,daFonsecaGrasselli2011,GourierouxSufana2010} and references therein), the Wishart model enriched with jumps (\cite{Mahoney2016,LeippoldTrojani2008,Brangeretal2018}) and the Ornstein-Uhlenbeck type stochastic volatility model (see e.g. \cite{MuhleKarbePfaffelStelzer2009,barndorffstelzer2007,BenthVos2013,GranelliVeraart2016}). Wishart and general positive semi-definite affine models have also been successfully used to model interest rates (e.g. \cite{Biagini2018,GnoattoGrasselli2014,GrasselliMiglietta2016,ChiarellaFonsecaGrasselli2014}). 
For stochastic processes the existence and uniqueness of stationary solutions as well as convergence to the stationary solution is of high interest. Geometric ergodicity means that convergence to a unique stationary distribution happens in total variation exponentially fast. It ensures fast convergence to the stationary regime in simulations and paves the way for statistical inference. By the same argument as in \cite[Proof of Theorem 4.3, Step 2]{masuda2004} geometric ergodicity and the existence of some $p$-moments of the stationary distribution guarantee exponential $\beta$-mixing (and thus strong mixing) for Markov processes. This, in turn, implies functional central limit theorems for the process (see for instance \cite{doukhan}), and thus yields asymptotic normality of estimators.\footnote{A typical statistical  approach  selects a parametric (sub)model and uses an estimation approach like quasi-maximum-likelihood or generalized methods of moments (see \cite{hansen82,Matyas1999,Hall2005}). The parametrization  needs to be sufficiently smooth and  identifiable. If one assumes that the observations are following the stationary model as well as that the true parameter is included in the parameter space and  if one chooses the parametric set-up additionally such that for every parameter one has sufficiently fast strong mixing, this implies that the quasi-likelihood or the empirical moments converge to the corresponding expected values when the number of observations/the time horizon goes to infinity and that central limit theorems hold provided sufficiently many moments are finite. Combining this with the identifiability and regularity of the parametrization then typically implies consistency and asymptotic normality of the estimators. For examples of such approaches see \cite{BrockwellSchlemm2011,Schlemm:Stelzer:2010b,genoncatalotjeantheaularedo00,Haugetal2007}. Strong mixing allows further to obtain an asymptotic theory for estimators of the spectral density, see \cite{rosenblatt1984}.  Other strands of the literature like the series of papers \cite{GaltchoukPergamenshchikov2014,GaltchoukPergamenshcikov2015,GaltchoukPergamenshchikov2007,GaltchoukPergamenshchikov20011} consider the non-parametric estimation of a drift  in a one-dimensional diffusion model. Their approach needs geometric ergodicity in a uniform manner over the parameter space. We conjecture that in order to use Theorem 2.1 of \cite{GaltchoukPergamenshchikov2014} one could strengthen our arguments to obtain uniform Lyapunov drift conditions over suitable ``compact'' parameter spaces. However, establishing the additionally needed ``uniform smallness (minorization)'' condition seems to be  intricate. Thus uniformity of geometric ergodicity results over parameter spaces are beyond the scope of the present paper and left for future research.}

In this paper we investigate geometric ergodicity of affine processes on proper, closed convex cones $K$ in finite dimensional vector spaces, by using the stability concepts for Markov processes (cf.~\cite{MT1,MT2,MT3,DMT1995}). In particular, we construct natural exponential and polynomial test functions to establish a Foster-Lyapunov drift condition, and we are able to characterize those affine processes
which satisfy the latter condition in terms of a sufficiently fast decay of the linear drift term. To this end, we develop a new result for linear maps $A$ with positive resolvent, i.e. $(\lambda-A)^{-1}(K)\subseteq K$,
extending a result by Ky Fan, \cite[Theorem 5']{fan1958topological} for the natural cones $\mathbb R_+^n$. Thus, for aperiodic and irreducible affine processes, we
find easy-to-check conditions for geometric ergodicity. For affine jump-diffusions on symmetric cones whose jumps are of compound-Poisson type with state-dependent jump intensity, we provide sufficient conditions for irreducibility and aperiodicity by relating them to diffusion processes killed at an exponential rate. Likewise, we establish easy to check sufficient conditions for irreducibility and aperiodicity in the finite activity pure jump case. Finally, we apply our general results to the special case of Wishart diffusions (with jumps) as well as the standard cone $\mathbb{R}_+^m$ and consider a simple case for the general state space $K\times \mathbb{R}^n$, for which no general theory on affine processes yet exists.

The literature on geometric ergodicity for affine processes on cones is sparse and either limited to the canonical state space,
or to Ornstein-Uhlenbeck type processes:
\begin{itemize}
\item For $K=\mathbb R_+$ \cite{jin2016positive,JinKremerRuediger2017} prove Harris recurrence and exponential ergodicity for the basic affine jump-diffusion, which arises as a default intensity model in credit risk \cite{duffie2001risk}, and slight extensions. These processes may not have state-dependent jumps. The proofs of \cite{jin2016positive} are technical and use
the explict form of the transition density. The result is a special case of our theory (see Remark \ref{rem: app dim 1}).
\item On the state space $\mathbb R_+\times\mathbb R$ \cite{barczy2014stationarity} studies geometric ergodicity of a mean-reverting two-factor affine jump--diffusion process on a half-space. We give a generalization
to affine diffusion models on $K\times\mathbb R^n$, where $K$ is a proper closed convex cone (See Theorem \ref{megageil} and Remark \ref{megasuper} \ref{nacka bazzi}).
\item Ornstein-Uhlenbeck processes driven by L\'evy noise belong to the affine class. They are cone-valued when driven by appropriate cone-valued L\'evy processes and the drift ensures that the cone cannot be left. For these processes existence of and convergence to stationary distributions is well-understood (see \cite{satoyamazato1984} and \cite{masuda2004} for geometric ergodicity).
\item In the Wishart case only criteria for convergence in distribution to the stationary case and ergodicity, in the sense that laws of large numbers hold, are known (see \cite{AlfonsiKebaierRey2017}).
\item While finalizing the present paper, \cite{ZhangGlynn2018} became available. This paper considers geometric ergodicity on the canonical state space $\mathbb{R}_+^m\times \mathbb{R}^n$. It requires finite activity jumps, strong solutions to the SDEs associated to affine processes and diagonal diffusion terms ensuring a kind of uniform ellipticity on the whole state space. If there are no state dependent jumps, it establishes positive Harris recurrence under logarithmic moment conditions and geometric ergodicity under $p$-th moment conditions with $p>0$. In the case of state dependent jumps it requires at least first moment conditions.
\end{itemize}

The present paper is primarily dedicated to affine processes on high-dimensional cones, and we allow for general, possibly infinite variation, jump behaviour and state-dependent compensators. We do not need the affine processes to be strong solutions to SDEs. Moreover, we also obtain results on exponential moments which are particularly relevant and natural in an affine context. To ensure irreducibility and aperiodicity we use appropriate controllability conditions which are weaker than uniform-ellipticity like conditions.
\subsection*{Program of the paper}
The preliminary Section \ref{sec: prep} is divided into three subsections. In Section \ref{sec: order} we develop a characterization of linear maps whose eigenvalues have strictly negative
parts (Theorem \ref{thm: linag}). This result plays a crucial part in the construction of suitable test functions for which the generator of affine processes satisfies the Foster-Lyapunov drift condition (Section \ref{sec:affine_ge}). In Section \ref{sec: affine} we recall the definition of affine processes on cones, as provided by \cite{cuchiero2016affine},
and summarize some properties. Section \ref{sec: ergodicity} recalls the definition of Harris recurrence and geometric ergodicity, and sufficient criteria for the latter property.

The main ergodicity results are developed in Section \ref{sec:affine_ge}. We split this section into two parts, for exponentially affine test functions (Section \ref{sec: exp}, Theorem \ref{prop:FLdrift_expo}) and polynomial ones (Section \ref{sec: pol}, Theorem \ref{prop:fldrift_trace}). 

Section \ref{sec: irr} provides criteria for aperiodicity and irreducibility for affine jump-diffusions with hypoelliptic main symbol and pure jump processes.

The final Section \ref{sec: appl} is dedicated to apply the theory of Sections \ref{sec:affine_ge} and \ref{sec: irr} to particularly relevant special cases: Wishart processes (with jumps) on the cone of positive semi-definite $d\times d$ matrices, affine processes on the natural cone $\mathbb R_+^n$,
and affine term-structure models on $K\times \mathbb R^n$, where $K$ is a proper closed convex cone. By attaching a linear space to the cone $K$, the state space is not proper anymore and thus falls outside
the cones considered before. 
\section{Preparatory statements}\label{sec: prep}
\subsection{Order preserving maps}\label{sec: order}
Throughout the paper, we use the notation $\mathbb R_+$ for the non-negative real line, and $\mathbb C$ for the complex numbers.

Let $V$ be a finite dimensional linear space $V$ with inner product $\langle \,,\,\rangle$. We identify, by virtue of Riesz' representation theorem, each element $\varphi$ of the dual $V^*$
by the unique $u\in V$ such that $\varphi(x)=\langle u,x\rangle$ for any $x\in V$. Hence we shall not distinguish between $V$ and $V^*$ in the following.

Let $K\subset V$ be a closed convex cone. We denote by $\preceq$ the induced partial order, that is for $x,y\in V$, $x\preceq y$ if and only if $y-x\in K$. 
$K$ is called proper, if $K\cap \{-K\}=\{0\}$, and generating, if $K-K=V$. The dual cone of $K$ is defined by $K^*:=\{u\in V\mid \langle u,x\rangle\geq 0\text{ for all } x\in K\}$. 
\begin{lemma}\label{lem: nonempty}
Let $K$ be a generating proper closed convex cone. Then
\begin{enumerate}
\item \label{cone prop 1} $K^*$ has non-empty interior $(K^*)^\circ$, and
\begin{equation}\label{eq: char dual}
(K^*)^\circ=\{u\in V\mid \langle u,x\rangle>0,\quad\forall x\in K\setminus\{0\}\}.
\end{equation}
\item \label{cone prop 2} $K^*$ is a generating proper closed convex cone. 
\item \label{cone prop 3} $K$ has non-empty interior and
\[
K^\circ=\{x\in V\mid \langle u,x\rangle>0,\quad\forall u\in K^*\setminus\{0\}\}.
\]
\end{enumerate}
\end{lemma}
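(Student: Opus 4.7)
My plan is to prove the three statements in sequence, using the bipolar identity $K^{**}=K$ (standard for closed convex cones in finite dimension) as the bridge between $K$ and its dual.

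First I would prove the characterization (\ref{eq: char dual}) in part \ref{cone prop 1} by two direct inclusions. For $\supseteq$, if $u\in V$ satisfies $\langle u,x\rangle>0$ for every nonzero $x\in K$, then by closedness the set $S:=\{x\in K:\|x\|=1\}$ is compact, so the continuous functional $x\mapsto\langle u,x\rangle$ attains a strictly positive minimum $m>0$ on it. For any $v\in V$ with $\|v\|<m$, Cauchy--Schwarz gives $\langle u+v,x\rangle\geq m-\|v\|>0$ on $S$, and by positive homogeneity $u+v\in K^*$; hence $u\in(K^*)^\circ$. For $\subseteq$, if $B(u,\eps)\subset K^*$ for some $\eps>0$, then for every nonzero $x\in K$ the vector $u-\eps x/\|x\|$ lies in $K^*$, giving $\langle u,x\rangle\geq\eps\|x\|>0$.

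Second, I would establish non-emptiness of $(K^*)^\circ$. Since $K$ is a closed convex cone, $K^{**}=K$; combined with properness this yields $\{x\in V:\langle u,x\rangle=0 \text{ for all } u\in K^*\}=K\cap(-K)=\{0\}$, so $\mathrm{span}(K^*)=V$. Pick a basis $u_1,\dots,u_n$ of $V$ inside $K^*$; then $u:=u_1+\cdots+u_n\in K^*$, and whenever $x\in K$ satisfies $\langle u,x\rangle=0$ each nonnegative summand $\langle u_i,x\rangle$ vanishes, forcing $x=0$. By the characterization just proved, $u\in(K^*)^\circ$. For part \ref{cone prop 2}, $K^*$ is trivially a closed convex cone, it is generating by the span computation just made, and it is proper because $K$ is generating: if $u\in K^*\cap(-K^*)$ then $\langle u,x\rangle=0$ for every $x\in K$, hence for every $x\in K-K=V$, so $u=0$. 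Finally, part \ref{cone prop 3} is immediate: by parts \ref{cone prop 1} and \ref{cone prop 2}, $K^*$ is itself a generating proper closed convex cone, so applying part \ref{cone prop 1} to $K^*$ and invoking $K^{**}=K$ yields the stated description of $K^\circ$.

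The only subtlety is the bookkeeping: properness of $K$ is what makes $K^*$ generating, while the generating property of $K$ is what makes $K^*$ proper, so the two halves of parts \ref{cone prop 1} and \ref{cone prop 2} must be assembled in the order above before part \ref{cone prop 3} can be deduced by pure duality. No real obstacle is expected beyond correctly invoking the finite-dimensional bipolar theorem.
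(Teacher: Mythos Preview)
Your argument is correct and self-contained. The paper, by contrast, dispatches parts \ref{cone prop 1} and \ref{cone prop 2} by direct citation to Faraut's monograph (Proposition I.1.4 and Corollary I.1.3 of \cite{faraut1994analysis}), and then derives \ref{cone prop 3} from \ref{cone prop 1} and \ref{cone prop 2} via $(K^*)^*=K$ exactly as you do. So the closing step is identical, but where the paper outsources the substance of \ref{cone prop 1}--\ref{cone prop 2} to a reference, you supply an elementary direct proof: the compactness-of-the-unit-sphere argument for the characterization \eqref{eq: char dual}, the $(\mathrm{span}\,K^*)^\perp=K\cap(-K)=\{0\}$ computation to extract a basis from $K^*$, and the sum-of-basis trick to exhibit an interior point. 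Your version has the advantage of being entirely transparent and not depending on access to \cite{faraut1994analysis}; the paper's version is shorter on the page. Your closing remark about the order of assembly---that properness of $K$ yields $K^*$ generating while the generating property of $K$ yields $K^*$ proper---is exactly the content the paper extracts from Faraut's Corollary I.1.3, so you have in effect reproduced that result inline.
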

\begin{proof}
The first property holds in view of \cite[Proposition I.1.4]{faraut1994analysis}.

$K^*$ is generating, because $K$ is proper (\cite[Corollary I.1.3]{faraut1994analysis}. Since $(K^*)^*=\overline K$, and $K$ is a closed generating cone, it follows that $K^*$ is proper (see \cite[Corollary I.1.3]{faraut1994analysis}). Hence \ref{cone prop 2} is proved.

Property \ref{cone prop 3} is an application of Property \ref{cone prop 1} applied to the dual cone, taking into account that $(K^*)^*=\overline K=K$ and using Property \ref{cone prop 2}.
\end{proof}
In dimensions higher than one, comparison arguments for solutions of ODEs only hold, when the vector fields have a special property, namely quasimonotonicity (besides enough regularity to allow uniqueness). We recall this property next:
\begin{definition}
A map $A: V\supseteq U\rightarrow V$ is \emph{quasimonotone increasing (qmi)} with respect to $K$, if 
\begin{equation}\label{eq:def qmi}
x,y\in U,\quad y-x\in K,\quad v\in K^*,\quad \langle v,y-x\rangle=0\quad\Rightarrow\quad \langle v,A(y)-A(x)\rangle\geq 0. 
\end{equation}
\end{definition}
\begin{remark}\label{rem both qmi}
\begin{enumerate}
\item Note that for linear maps $A: V\rightarrow V$, this property simplifies to
\[
x\in K,\quad v\in K^*,\quad \langle v,x\rangle=0\quad\Rightarrow\quad \langle v, Ax\rangle\geq 0.
\]
\item Clearly, a linear map $A:V\rightarrow V$ is 
qmi with respect to $K$ if and only if $A^\top$ is qmi with respect to $K^*$, where $A^\top$ is the adjoint of $A$, defined by
\[
\langle x,A^\top v\rangle=\langle Ax,v\rangle,\quad x,y \in V.
\]
\end{enumerate}
\end{remark}
For a linear map $A: V\rightarrow V$, let $\sigma(A)$ denote its spectrum. We use the notation $\tau(A)=\max\{\Re(\lambda),\quad \lambda\in\sigma(A)\}$ for the spectral bound, $\rho(A):=\max\{\vert \lambda\vert,\quad \lambda\in\sigma(A)\}$ for the spectral radius and $e^{tA}$ for the matrix exponential. Clearly $\tau(A)=\tau(A^\top)$, because $A$ and $A^\top$ have the same eigenvalues. Furthermore, $\lambda>\tau(A)$ implies
that $\lambda\not\in \sigma(A)$, whence $\lambda-A$ is an isomorphism. The following is a fundamental characterization of
qmi maps in terms of their resolvent:
\begin{proposition}\label{prop 1}
Let $A: V\rightarrow V$ be a linear map, and $K$ be a generating, proper closed convex cone. The following are equivalent:
\begin{enumerate}
\item \label{qmichar1} $A$ is qmi with respect to $K$.
\item $e^{tA}(K)\subseteq K$.
\item For any $\lambda>\tau(A)$, $(\lambda-A)^{-1}(K)\subseteq K$.
\item \label{qmichar4} For any $\lambda>\tau(A)$, $(\lambda-A^\top)^{-1}(K^*)\subseteq K^*$.
\end{enumerate}
\end{proposition}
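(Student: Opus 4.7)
The plan is to establish the cycle \ref{qmichar1} $\Rightarrow$ (ii) $\Rightarrow$ (iii) $\Rightarrow$ \ref{qmichar1} and then deduce (iii) $\Leftrightarrow$ \ref{qmichar4} by pure duality, exploiting the bipolar identity $K^{**}=K$ that holds since $K$ is a closed convex cone.

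The hard step will be \ref{qmichar1} $\Rightarrow$ (ii), i.e.\ invariance of $K$ under the flow $\dot u=Au$; my approach is a perturbation argument. By Lemma \ref{lem: nonempty}\ref{cone prop 3} pick $e\in K^\circ$, and choose $M>0$ so large that $f:=Me-Ae$ also lies in $K^\circ$ (possible because $K^\circ$ is an open cone containing $e$). For $\varepsilon>0$ and $x\in K$, define
\[
u_\varepsilon(t):=e^{tA}x+\varepsilon e^{Mt}e,
\]
which starts in $K^\circ$ and satisfies $u_\varepsilon'(t)=Au_\varepsilon(t)+\varepsilon e^{Mt}f$. If $u_\varepsilon$ ever exited $K$, continuity would yield a first exit time $s>0$ with $u_\varepsilon(s)\in\partial K$, and Lemma \ref{lem: nonempty}\ref{cone prop 3} would supply some $v\in K^*\setminus\{0\}$ with $\langle v,u_\varepsilon(s)\rangle=0$. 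The function $\psi(t):=\langle v,u_\varepsilon(t)\rangle$ would be non-negative on $[0,s]$ and vanish at $s$, forcing $\psi'(s)\leq 0$. On the other hand, qmi applied at $u_\varepsilon(s)\in K$ gives $\langle v,Au_\varepsilon(s)\rangle\geq 0$, and Lemma \ref{lem: nonempty}\ref{cone prop 1} gives $\langle v,f\rangle>0$, so $\psi'(s)\geq \varepsilon e^{Ms}\langle v,f\rangle>0$ - contradiction. Hence $u_\varepsilon(t)\in K$ for every $t\geq 0$, and sending $\varepsilon\to 0$ yields $e^{tA}x\in K$ by closedness of $K$.

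The implication (ii) $\Rightarrow$ (iii) I will obtain via the Laplace representation $(\lambda-A)^{-1}=\int_0^\infty e^{-\lambda t}e^{tA}\,dt$, valid for $\lambda>\tau(A)$: for $x\in K$ the integrand is $K$-valued with positive weight, so the integral lies in the closed convex cone $K$. For (iii) $\Rightarrow$ \ref{qmichar1}, given $x\in K$ and $v\in K^*$ with $\langle v,x\rangle=0$, the resolvent identity $\lambda(\lambda-A)^{-1}x=x+(\lambda-A)^{-1}Ax$ combined with (iii) yields $\langle v,(\lambda-A)^{-1}Ax\rangle\geq 0$ for $\lambda>\max(\tau(A),0)$; multiplying by $\lambda$ and sending $\lambda\to\infty$, the standard norm convergence $\lambda(\lambda-A)^{-1}\to I$ produces $\langle v,Ax\rangle\geq 0$.

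Finally, (iii) $\Leftrightarrow$ \ref{qmichar4} is the general duality fact that a linear map $T$ satisfies $T(K)\subseteq K$ iff $T^\top(K^*)\subseteq K^*$, since $\langle T^\top u,x\rangle=\langle u,Tx\rangle$ for $u\in K^*$ and $x\in K$ and $K^{**}=K$; applied to $T=(\lambda-A)^{-1}$, whose adjoint is $(\lambda-A^\top)^{-1}$, this delivers the equivalence.
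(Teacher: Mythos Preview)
Your proof is correct and self-contained, whereas the paper's proof simply cites \cite[Satz 1]{elsner1974quasimonotonie} for the equivalence of \ref{qmichar1}, (ii) and (iii), and then obtains \ref{qmichar4} from \ref{qmichar1} via the duality observation of Remark \ref{rem both qmi} (namely $A$ is qmi w.r.t.\ $K$ iff $A^\top$ is qmi w.r.t.\ $K^*$), applying the already-established equivalence to $A^\top$ and $K^*$. So your route is genuinely different: you prove \ref{qmichar1} $\Rightarrow$ (ii) by a direct first-exit-time/perturbation argument, (ii) $\Rightarrow$ (iii) by the Laplace representation of the resolvent, (iii) $\Rightarrow$ \ref{qmichar1} by the resolvent expansion $\lambda(\lambda-A)^{-1}\to I$, and (iii) $\Leftrightarrow$ \ref{qmichar4} by the elementary duality $T(K)\subseteq K \Leftrightarrow T^\top(K^*)\subseteq K^*$. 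The benefit of your approach is that it is entirely elementary and does not require the reader to track down and translate Elsner's result; the paper's approach is shorter on the page and defers the analytic work to the literature. One small slip: where you write ``Lemma \ref{lem: nonempty}\ref{cone prop 1} gives $\langle v,f\rangle>0$'', you actually need part \ref{cone prop 3} (since $f\in K^\circ$ and $v\in K^*\setminus\{0\}$, not $v\in(K^*)^\circ$); this does not affect the argument.
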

\begin{proof}
The equivalence of the first three conditions follows from \cite[Satz 1, (2), (3) and (5), $\alpha=1$]{elsner1974quasimonotonie}. Note that \cite[(3)]{elsner1974quasimonotonie} is our definition of qmi, whereas \cite{elsner1974quasimonotonie} uses an equivalent one. Furthermore, the statement was shown for matrices, but we use the obvious modification for linear maps. 

By Remark \ref{rem both qmi}, $A$ is qmi with respect to $K$, if and only if $A^\top$ is qmi with respect to $K^*$. Therefore, statement \ref{qmichar4} is equivalent to \ref{qmichar1}, and can be proved similarly, by replacing the role of $K$ and $K^*$, realizing that due to Lemma \ref{lem: nonempty} the dual cone $K^*$ has non-empty interior and thus $K^*$ satisfies the standing assumption (e) of \cite{elsner1974quasimonotonie}. 
\end{proof}
Another ingredient in our main statement of this section (Theorem \ref{thm: linag}) is the celebrated
theorem by Krein-Rutman \cite{krein1948linear}:
\begin{theorem}\label{th: Krein Rutman}
Let $X$ be a Banach space and $C\subset X$ be a convex cone such that $C-C$ is dense in $X$. Let $T: X\rightarrow X$ be a compact operator which is positive, meaning $T(C)\subseteq C$, and assume that its spectral radius $\rho(T)$ is strictly positive. Then there exists $x\in C\setminus \{0\}$ such that $Tx=\rho(T)x$.
\end{theorem}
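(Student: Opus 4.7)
The plan is to combine the Riesz--Schauder theory of compact operators with a resolvent-based (or, as a fallback, a Schauder fixed-point) limit argument that stays inside the cone $C$. First, since $T$ is compact and $\rho(T)>0$, the Riesz--Schauder theorem together with Gelfand's formula imply that $\rho(T)\in\sigma(T)$, and any nonzero spectral value of a compact operator is an eigenvalue. Hence $\rho(T)$ is already known to be an eigenvalue of $T$; the task reduces to exhibiting a corresponding eigenvector in $C\setminus\{0\}$.

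To produce such a positive eigenvector, pick $u_0\in C$ with $T u_0\neq 0$ (such $u_0$ exists, for otherwise $T$ would vanish on the dense subspace $C-C$ and hence on $X$, contradicting $\rho(T)>0$). For $r>\rho(T)$ the Neumann series
\[
R(r)=(r-T)^{-1}=\sum_{n\geq 0}\frac{T^n}{r^{n+1}}
\]
converges in operator norm and preserves $C$, because each $T^n$ does and $C$ is closed and convex. Taking $r_k\downarrow\rho(T)$ one has $\|R(r_k)\|_{\mathrm{op}}\to\infty$ because $\rho(T)\in\sigma(T)$; selecting $u_k\in C$ with $\|u_k\|=1$ and $\|R(r_k)u_k\|\to\infty$, define the normalizations $y_k:=R(r_k)u_k/\|R(r_k)u_k\|\in C$, so that $\|y_k\|=1$ and
\[
(r_k-T)\,y_k=\frac{u_k}{\|R(r_k)u_k\|}\longrightarrow 0.
\]
Combined with $r_k\to\rho(T)$ this gives $T y_k-\rho(T)y_k\to 0$. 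The compactness of $T$ yields a convergent subsequence $T y_{k_j}\to w$, and the displayed relation then forces $y_{k_j}\to y^*:=w/\rho(T)$, with $y^*\in C$ (since $C$ is closed), $\|y^*\|=1$, and $T y^*=\rho(T)y^*$, providing the required positive eigenvector.

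The main obstacle is the selection of $u_k\in C$ with $\|R(r_k)u_k\|\to\infty$: the uniform-boundedness principle does not directly apply on the merely dense, possibly non-Baire subspace $C-C$, so the operator-norm blow-up does not automatically transfer to a vector of $C$. My preferred way to bypass this is to recast the construction as a Schauder fixed-point argument. Using Hahn--Banach separation of $\{-u_0\}$ from $C$, produce $\phi\in X^*$ with $\phi|_C\geq 0$ and $\phi(u_0)>0$, and apply Schauder's theorem to the perturbed normalized map $F_\epsilon(x)=(Tx+\epsilon u_0)/\phi(Tx+\epsilon u_0)$ on a bounded closed convex subset of the affine slice $\{x\in C:\phi(x)=1\}$; compactness of $T$ makes $F_\epsilon$ compact and produces fixed points satisfying $T x_\epsilon+\epsilon u_0=\lambda_\epsilon x_\epsilon$. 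Letting $\epsilon\to 0$ along a subsequence and using the compactness of $T$ yields a positive eigenvector $x^*\in C\setminus\{0\}$ with eigenvalue $\lambda^*\geq 0$, and a final spectral comparison (any eigenvalue is bounded by $\rho(T)$, while the resolvent blow-up rules out the strict inequality) identifies $\lambda^*$ with $\rho(T)$.
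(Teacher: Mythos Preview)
The paper does not prove this theorem at all; it is quoted as the classical Krein--Rutman theorem with a citation to the original 1948 paper, and used as a black box. So there is no paper's proof to compare against; I can only assess your argument on its own merits.

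Your first paragraph contains a genuine gap. Riesz--Schauder and Gelfand's formula give you that \emph{some} $\lambda\in\sigma(T)$ has $|\lambda|=\rho(T)$, and that every nonzero spectral value is an eigenvalue; they do \emph{not} give you that the positive real number $\rho(T)$ itself lies in $\sigma(T)$. That conclusion is exactly where positivity enters, and it is essentially half of what Krein--Rutman asserts. So the reduction ``the task reduces to exhibiting a corresponding eigenvector in $C\setminus\{0\}$'' is premature.

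Your resolvent argument correctly identifies its own obstruction (transferring the blow-up of $\|R(r_k)\|$ to a vector in $C$), and your Schauder fallback is the right instinct---it is how most modern proofs proceed---but as written it is too sketchy at two points. First, the slice $\{x\in C:\phi(x)=1\}$ need not be bounded, so you must specify a bounded closed convex subset that $F_\varepsilon$ maps into itself; this requires a quantitative bound tying $\phi$ to the norm on the image of $T$, and is where compactness of $T$ really does work. Second, and more seriously, the ``final spectral comparison'' is where the substance lies: you obtain a positive eigenvector with some eigenvalue $\lambda^*\ge 0$, but showing $\lambda^*=\rho(T)$ rather than a smaller positive eigenvalue requires an argument (typically: show that for any eigenvalue $\mu$ of $T$ there is a domination $|\mu|\le \lambda^*$, using that the eigenvector lies in $C$ and that $C-C$ is dense). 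Your parenthetical ``the resolvent blow-up rules out the strict inequality'' does not supply this; the resolvent blows up at \emph{every} spectral value, not just at $\rho(T)$. Finally, note that you repeatedly use that $C$ is closed, which the stated hypotheses omit (the paper's applications all have $C$ closed, and the standard Krein--Rutman hypothesis includes it, so this is a defect of the statement rather than of your proof).
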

We require an elementary lemma:
\begin{lemma}\label{lem eval qmi}
Let $A:V\rightarrow V$ be qmi with respect to a generating, proper closed convex cone. Then $\tau(A)=\max\{\Re(\lambda)\mid \lambda\in \sigma(A)\}$ is an eigenvalue of $A$.
\end{lemma}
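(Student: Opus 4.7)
The plan is to boost the standard Perron--Frobenius type statement for the single positive operator $e^A$ to an eigenvalue statement for the generator $A$ itself, by restricting to the dominant eigenspace of $e^A$ and exploiting the cone $K$ a second time.

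First I would apply Theorem~\ref{th: Krein Rutman} to $T := e^A$. Because $A$ is qmi, Proposition~\ref{prop 1} gives $e^{tA}(K) \subseteq K$ for every $t \geq 0$, so in particular $T$ preserves $K$. The operator $T$ is compact (as $\dim V < \infty$), and its spectrum equals $\{e^\lambda : \lambda \in \sigma(A)\}$, so $\rho(T) = e^{\tau(A)} > 0$. Since $K$ is generating, $K - K = V$ is trivially dense, so Krein--Rutman supplies $x_0 \in K \setminus \{0\}$ with $e^A x_0 = e^{\tau(A)} x_0$.

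Next I would pass to the subspace $V_0 := \ker(e^A - e^{\tau(A)} I)$, which contains $x_0$ and is $A$-invariant, and set $B := A|_{V_0} - \tau(A) I$. The identity $e^B = I_{V_0}$ forces $\sigma(B) \subseteq 2\pi i\,\mathbb{Z}$; moreover, if $B$ had a Jordan block of size $>1$ then so would $e^B$, so $B$ must be diagonalizable over $\mathbb{C}$.

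The main difficulty is that $0$ need not a priori belong to $\sigma(B)$: the $2\times 2$ rotation $A=\bigl(\begin{smallmatrix} 0 & 2\pi \\ -2\pi & 0 \end{smallmatrix}\bigr)$ satisfies $e^A = I$ with no real eigenvalue, showing that the Krein--Rutman output alone is insufficient and that the cone must be used a second time. Arguing by contradiction, suppose $0 \notin \sigma(B)$ and set
\[
y(t) := e^{tB} x_0 = e^{-t\tau(A)} e^{tA} x_0, \qquad t \geq 0.
\]
Then $y(t) \in K$ for every $t \geq 0$, since $e^{tA}(K) \subseteq K$. Expanding $x_0$ along the eigenbasis of $B$ in the complexification of $V_0$ writes $y(t)$ as a finite sum of terms $e^{2\pi i k t} x_{0,k}$ with $k \neq 0$, so $\int_0^1 y(t)\,dt = 0$. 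On the other hand, for any $v \in K^*$ the map $t \mapsto \langle v, y(t)\rangle$ is continuous, non-negative on $[0,1]$ and integrates to zero, hence vanishes identically; since $K^* - K^* = V$ by Lemma~\ref{lem: nonempty}, this forces $y \equiv 0$ on $[0,1]$, contradicting $x_0 = y(0) \neq 0$. Therefore $0 \in \sigma(B)$, which is exactly $\tau(A) \in \sigma(A)$.
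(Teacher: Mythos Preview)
Your proof is correct and takes a genuinely different route from the paper's.

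The paper argues by contradiction directly at the level of Krein--Rutman: if $\tau(A)\notin\sigma(A)$, then the eigenvalues on the critical line are $\tau(A)+i\mu_j$ with $\mu_j\neq 0$, and one can pick a single $t_0>0$ so that none of the $e^{t_0(\tau(A)+i\mu_j)}$ coincide with $e^{t_0\tau(A)}$. Then $P=e^{t_0A}$ is a positive operator whose spectral radius $e^{t_0\tau(A)}$ is not an eigenvalue, contradicting Theorem~\ref{th: Krein Rutman}. So the paper uses Krein--Rutman once, at a cleverly chosen time, and never looks at the eigenvector it produces.

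You instead apply Krein--Rutman at $t=1$, keep the eigenvector $x_0\in K$, and then exploit the full semigroup $t\mapsto e^{tA}$ a second time through the averaging argument $\int_0^1 y(t)\,dt=0$. The key extra idea is that a nonzero continuous curve lying in a proper cone cannot average to zero; this is where the properness of $K$ (via $K^*-K^*=V$) enters again. Your approach is slightly longer but arguably more transparent about \emph{why} the cone forces a real dominant eigenvalue: it rules out purely oscillatory behaviour of the orbit inside $K$. The paper's approach is shorter because it front-loads the work into the choice of $t_0$, but it hides this same obstruction inside the spectral mapping. Both arguments are valid and rest on the same two ingredients, Proposition~\ref{prop 1} and Theorem~\ref{th: Krein Rutman}; they simply combine them in a different order.
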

\begin{proof}
Assume, for a contradiction, $\tau(A)\not\in\sigma(A)$. Then there exist $r\in\mathbb{N}, \mu_1,\dots,\mu_r\neq 0$ such that 
\[
\sigma_A^T:=\sigma(A)\cap \{\tau(A)+i\mathbb R\}=\{\tau(A)+i\mu_i,\quad 1\leq i\leq r\}.
\]
Hence there exists $t_0>0$ such that $t_0 \mu_i\not \in \mathbb Z$ for any $1\leq i\leq r$. By the Jordan normal form of $A$,
all eigenvalues of $P:=e^{t_0 A}$ are of the form $e^{t_0\lambda}$, where $\lambda\in \sigma(A)$. Therefore, the spectral radius of $P$
equals $\rho_P:=e^{t_0 \tau(A)}$, but since $e^{t_0(\tau(A)+i\mu_i)}\neq e^{t_0\tau(A)}=\rho_P$ for any $1\leq i\leq r$, the spectral radius
is not an eigenvalue of $P$. On the other hand, since $A$ is qmi, $P$ is a positive map, hence by Theorem \ref{th: Krein Rutman} $\rho(P)$
must be an eigenvalue, a contradiction. We conclude that indeed $\tau(A)\in\sigma(A)$.
\end{proof}
The main statement of this section is the following extension of \cite[Theorem 5', (a), (c) for $K=\mathbb R_+^n$]{fan1958topological} to general cones $K$:
\begin{theorem}\label{thm: linag}
Let $K$ be a generating, proper closed convex cone, and $A: V\rightarrow V$ be a qmi linear map with respect to $K$. The following are equivalent:
\begin{enumerate}
\item \label{qmi 1} $\tau(A)<0$.
\item \label{qmi 2} There exists $v\in (K^*)^\circ$ such that $A^\top v\in -(K^*)^{\circ}$.
\end{enumerate}
\end{theorem}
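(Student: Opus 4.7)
The plan is to prove both directions using the resolvent characterization of qmi maps (Proposition \ref{prop 1}) combined with a Krein--Rutman type eigenvector in $K$.

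For the direction \ref{qmi 1} $\Rightarrow$ \ref{qmi 2}, note that $\tau(A)<0$ means $\lambda=0>\tau(A)$ lies in the resolvent set, and by Proposition \ref{prop 1}\ref{qmichar4} we get $-(A^\top)^{-1}(K^*)\subseteq K^*$. Concretely, $-(A^\top)^{-1}=\int_0^\infty e^{tA^\top}\,dt$ (the integral converges in norm because $\tau(A^\top)=\tau(A)<0$). Fix any $u\in (K^*)^\circ$ and define
\[
v:=-(A^\top)^{-1}u=\int_0^\infty e^{tA^\top}u\,dt.
\]
Then trivially $A^\top v=-u\in -(K^*)^\circ$, and $v\in K^*$ since the integrand lies in $K^*$ by Proposition \ref{prop 1}. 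The remaining task is to show $v$ is in the \emph{interior} of $K^*$. Suppose towards contradiction $v\in\partial K^*$. Then by Lemma \ref{lem: nonempty}\ref{cone prop 3} there exists $w\in K\setminus\{0\}$ with $\langle w,v\rangle=0$. Writing this out,
\[
0=\langle w,v\rangle=\int_0^\infty \langle w,e^{tA^\top}u\rangle\,dt=\int_0^\infty \langle e^{tA}w,u\rangle\,dt.
\]
Because $A$ is qmi, $e^{tA}w\in K$; because $e^{tA}$ is invertible, $e^{tA}w\neq 0$; and since $u\in (K^*)^\circ$, the characterization \eqref{eq: char dual} gives $\langle e^{tA}w,u\rangle>0$ for every $t\ge 0$, contradicting the vanishing integral. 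Hence $v\in (K^*)^\circ$.

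For the converse \ref{qmi 2} $\Rightarrow$ \ref{qmi 1}, the key step is to produce an eigenvector of $A$ for the eigenvalue $\tau(A)$ that lies in $K$. By Lemma \ref{lem eval qmi}, $\tau(A)\in\sigma(A)$. Pick any $\lambda_0>\tau(A)$ and set $R:=(\lambda_0-A)^{-1}$. The spectrum of $R$ is $\{(\lambda_0-\lambda)^{-1}:\lambda\in\sigma(A)\}$, and a direct estimate shows that its spectral radius equals $\rho(R)=(\lambda_0-\tau(A))^{-1}$, attained precisely at $\lambda=\tau(A)$. By Proposition \ref{prop 1}\ref{qmichar1}--\ref{qmichar4}, $R(K)\subseteq K$; $R$ is compact since $V$ is finite dimensional; and $K-K=V$ is dense. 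Applying the Krein--Rutman Theorem \ref{th: Krein Rutman} to $R$ on the cone $K$ yields $w\in K\setminus\{0\}$ with $Rw=\rho(R)w$, which rearranges to $Aw=\tau(A)w$.

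Now pair this eigenvector with the hypothetical $v$. Since $v\in (K^*)^\circ$ and $w\in K\setminus\{0\}$, \eqref{eq: char dual} gives $\langle v,w\rangle>0$; since $-A^\top v\in (K^*)^\circ$, the same characterization gives $\langle A^\top v,w\rangle<0$. Hence
\[
\tau(A)\,\langle v,w\rangle=\langle v,Aw\rangle=\langle A^\top v,w\rangle<0,
\]
which forces $\tau(A)<0$. The main obstacle is the boundary exclusion in the first direction (showing $v\in (K^*)^\circ$ rather than merely $v\in K^*$), since it requires combining the integral representation with the qmi property of $A$ on the primal side via duality; once this is in place, the reverse direction follows cleanly from Krein--Rutman applied to a suitable resolvent.
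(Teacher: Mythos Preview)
Your proof is correct and follows essentially the same strategy as the paper: both directions rest on the resolvent positivity of Proposition~\ref{prop 1} and the Krein--Rutman theorem applied to $(\lambda-A)^{-1}$, followed by the pairing $\tau(A)\langle v,w\rangle=\langle A^\top v,w\rangle$. Two minor differences are worth noting. For \ref{qmi 1}$\Rightarrow$\ref{qmi 2}, the paper picks $\lambda\in(\tau(A),0)$ and sets $\eta^*=(\lambda-A^\top)^{-1}v$ for some $v\in(K^*)^\circ$, obtaining $-A^\top\eta^*=v-\lambda\eta^*\in(K^*)^\circ+K^*=(K^*)^\circ$; the fact that $\eta^*$ itself lies in the \emph{interior} of $K^*$ is left implicit (it follows because a linear isomorphism mapping $K^*$ into $K^*$ is open, hence sends $(K^*)^\circ$ into $(K^*)^\circ$). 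Your integral representation $v=\int_0^\infty e^{tA^\top}u\,dt$ makes this interior membership fully explicit via the strict positivity of the integrand, which is a cleaner bookkeeping. (One small citation slip: you should invoke Lemma~\ref{lem: nonempty}\ref{cone prop 1}, i.e.\ \eqref{eq: char dual}, rather than \ref{cone prop 3}, to produce the separating $w\in K\setminus\{0\}$.) For \ref{qmi 2}$\Rightarrow$\ref{qmi 1}, the paper derives $\tau=\lambda-1/\rho_\lambda$ and then argues via a comparison of the two functions $\mu\mapsto\lambda-|\lambda-\mu|$ and $\mu\mapsto\Re(\mu)$ on $\sigma(A)$ that $\tau=\tau(A)$; your direct computation $|\lambda_0-\mu|^2=(\lambda_0-\Re\mu)^2+(\Im\mu)^2\ge(\lambda_0-\tau(A))^2$, combined with Lemma~\ref{lem eval qmi} to ensure the bound is attained, reaches the same conclusion more economically.
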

\begin{proof}
\ref{qmi 1} $\Rightarrow$ \ref{qmi 2}:\\
$A^\top$ is qmi with respect to $K^*$, and clearly $\tau(A)=\tau(A^\top)$. Pick $\tau(A)<\lambda<0$. By Proposition \ref{prop 1}, $(\lambda-A^\top)^{-1} (K^*)\subseteq K^*$, hence by Lemma \ref{lem: nonempty}, there exists $v\in (K^*)^{\circ}$ such that $\eta^*:=(\lambda-A^\top)^{-1}v\in K^*$, and clearly $v\neq 0$. Therefore,
$\lambda \eta^*-A^\top\eta^*=(\lambda-A^\top) \eta^*=v$. We conclude that $-A^\top\eta^*=v-\lambda\eta^*\in K^*+(K^*)^{\circ}=(K^*)^{\circ}$.

\ref{qmi 2} $\Rightarrow$ \ref{qmi 1}:\\ Pick a $\lambda>\tau(A)$. By Proposition \ref{prop 1}, the operator
$P:=(\lambda-A)^{-1}$ is positive on $K$, and since $0$ is not an eigenvalue of a linear isomorphism, the spectral radius of $P$ is strictly larger than $0$. Hence by Theorem \ref{th: Krein Rutman}, there exists 
$x\in K\setminus\{0\}$ such that $(\lambda-A)^{-1}x=\rho_\lambda x$, and $\rho_\lambda:=\max\{\vert \mu\vert\mid \mu \in \sigma(\lambda-A)^{-1})\}$. A straightforward check reveals that $\tau:=\lambda-\frac{1}{\rho_\lambda}\in\sigma (A)$ and $A x=\tau x$. Furthermore,
\[
\tau=\lambda-\frac{1}{\rho_\lambda}=\max\{\lambda-\vert \lambda-\mu\vert\mid\mu\in\sigma(A)\}.
\]
The two functions $\sigma(A)\rightarrow \mathbb R$, 
\[
\mu\mapsto \lambda-\vert\lambda-\mu\vert,\quad\mu\mapsto \Re(\mu),
\]
agree on the real line, and the first one has its maximum at the real value $\tau\in \sigma(A)$, while the second one has its maximum at $\tau(A)\in\sigma(A)$ (see Lemma \ref{lem eval qmi}). Therefore the maxima of the two functions agree, i.e.,
\begin{align*}
\tau&=\lambda-\frac{1}{\rho_\lambda}=\max\{\lambda-\vert \lambda-\mu\vert\mid\mu\in\sigma(A)\}\\
&=\max\{\lambda-\vert \lambda-\mu\vert\mid\mu\in\sigma(A)\cap \mathbb R\}=\max\{\Re(\mu)\mid\mu\in\sigma(A)\cap \mathbb R\}
\\&=\max\{\Re(\mu)\mid\mu\in\sigma(A)\}=\tau(A).
\end{align*}
It thus remains to show that $\tau (A)<0$: To this end, let $v\in (K^*)^\circ$ such that $A^\top v\in -(K^*)^{\circ}$. Then
\[
\langle A^\top v,x\rangle=\langle v, Ax\rangle=\langle v, \tau(A) x\rangle=\tau(A)\langle v,x\rangle.
\]
Since $\langle v,x\rangle>0$ and $\langle A^\top v,x\rangle<0$ due to Lemma \ref{lem: nonempty}, $\tau(A)$ must be strictly negative, and we are done.
\end{proof}
\subsection{Affine processes.}\label{sec: affine}
In the following we give the definition of affine processes on cones, as provided by \cite{cuchiero2016affine},
and summarize some properties. The state space $K$ will be a generating, proper closed convex cone in a finite dimensional vector space $V$, as in the previous section.\footnote{The generating property, $V=K-K$ is technically necessary to identify the generalized Riccati differential equations for the exponents $\phi$, $\psi$.} Adjoined to the state space $K$ is a point $\Delta \notin K$, the cemetery state, and we set $K_{\Delta}=K\cup\{\Delta\}$ for the one-point compactification. For a Markov process $X$ on $K$,
its transition function is denoted by
$(p_t(x,\cdot))_{t\geq 0, x \in K}$. We can extend to $K_{\Delta}$ by introducing
\[
p_t(x,\{\Delta\})=1-p_t(x,K), \quad p_t(\Delta, \{\Delta\})=1,
\]
for all $ t \in \mathbb R_+$ and $x \in K$, with the convention $f(\Delta)=0$ for any function $f$ on $K$. 
\begin{definition}\label{def:affineprocessK}
A time-homogeneous Markov process $X$ relative to some filtration
$(\mathcal{F}_t)$ with state space $K$ (augmented by $\Delta$) and
transition kernels $(p_t(x,d\xi))_{t
\geq 0,x \in K}$ is called \emph{affine} if
\begin{enumerate}
\item it is stochastically continuous, that is, $\lim_{s\to t}
p_s(x,\cdot)=p_t(x,\cdot)$ weakly on $K$ for every $t \geq 0$ and $x\in K$, and
\item\label{def:affineprocess2K} its Laplace transform has exponential-affine
dependence on the initial state. This means that there exist functions
$\phi:\mathbb R_+ \times K^{\ast} \to \mathbb R_+$ and $\psi:\mathbb R_+ \times K^{\ast} \to V $
such that
\begin{align}\label{eq:affineprocessK}
\int_{K}e^{-\langle u, \xi
\rangle}p_t(x,d\xi)=e^{-\phi(t,u)-\langle \psi(t,u),x\rangle},
\end{align}
for all $x\in K$ and $(t,u) \in \mathbb R_+ \times K^{\ast}$.
\end{enumerate}
\end{definition}
The following statement recalls the basic properties of affine processes on cones, in particular
the description of $(\phi,\psi)$ as solutions for so-called generalized Riccati differential equations (cf. \cite[Theorem 2.4, Part I]{cuchiero2016affine})
\begin{theorem}\label{th: main theorem}
Let $X$ be an affine process on $K$. Then $X$ is a Feller process, the functions $\phi$ and $\psi$ given in
\eqref{eq:affineprocessK} are differentiable with respect to time
and satisfy the generalized Riccati equations for $u \in K^{\ast}$,
that is,
\begin{subequations}\label{eq:Riccati}
\begin{align}
\frac{\partial \phi(t,u)}{\partial t}&=F(\psi(t,u)), &\quad &\phi(0,u)=0,\label{eq:RiccatiF}\\
\frac{\partial \psi(t,u)}{\partial t}&=R(\psi(t,u)), &\quad &\psi(0,u)=u \in K^{\ast}, \label{eq:RiccatiR}
\end{align}
\end{subequations}
where $F(u)=\partial_t\phi(t,u)|_{t=0}$ and $R(u)=\partial_t \psi(t,u)|_{t=0}$.
Moreover, relative to some truncation function\footnote{A truncation function is continuous, bounded in norm by $1$ and equal to the identity in a neighborhood of the origin.} $\chi$, there exists a parameter set $(Q,b,B,c,\gamma,m,\mu)$
such that the functions $F$ and $R$ are of the form
\begin{subequations}\label{eq:FRLevyK}
\begin{align}
F(u)&=\langle b,u \rangle + c -\int_{K} \left(e^{-\langle u, \xi\rangle}-1\right) m(d\xi),\label{eq:FLevyK}\\
R(u)&=-\frac{1}{2}Q(u,u)+B^{\top}(u)+\gamma-\int_{K} \left(e^{-\langle u, \xi\rangle}-1+ \langle \chi(\xi),u \rangle \right) \mu(d\xi),\label{eq:RLevyK}
\end{align}
\end{subequations}
where
\begin{enumerate}
\item\label{eq:const_drift1} $b \in K$,
\item\label{eq:const_killing1} $c \in \mathbb R_+$,
\item\label{eq:const_measure1} $m$ is a
Borel measure on $K$ satisfying $m(\{0\}) = 0$ and
\[
\int_K \left(\|\xi\| \wedge 1\right)m(d\xi)< \infty,
\]
\item\label{eq:lin_diffusion1} $Q: V \times V \rightarrow V$ is a symmetric bilinear function
such that for all $v \in V$, $Q(v,v) \in K^{\ast}$ and $\langle x, Q(u,v) \rangle=0$, whenever $\langle u, x\rangle=0$ for $u \in K^{\ast}$ and $x \in K$,
\item\label{eq:lin_killing1} $\gamma \in K^{\ast}$,
\item\label{eq:lin_measure1} $\mu$ is a $K^{\ast}$-valued $\sigma$-finite Borel measure on $K$ satisfying $\mu(\{0\}) = 0$,
$
\int_K\left(\|\xi\|^2 \wedge 1 \right)\langle x,\mu(d\xi)\rangle < \infty
$ for all $x \in K$,
and
\[
\int_K\langle \chi(\xi), u \rangle \langle x, \mu(d\xi)\rangle < \infty \textrm{ for all $u \in K^{\ast}$ and $x \in K$ with $\langle u, x\rangle=0$},
\]
\item\label{eq:lin_drift1} $B^{\top}: V \rightarrow V$ is a linear map, satisfying
\[
\langle x, B^{\top}(u)\rangle - \int_K \langle \chi(\xi), u \rangle \langle x, \mu(d\xi)\rangle \geq 0 \textrm{ for all $u \in K^{\ast}$ and $x \in K$ with $\langle u, x\rangle=0$}.
\]
\end{enumerate}
\end{theorem}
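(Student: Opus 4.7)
Since this statement is explicitly flagged as a recollection of \cite[Theorem 2.4, Part~I]{cuchiero2016affine}, the honest proof plan is to cite that result and, if desired, reconstruct the argument in our notation. Below I sketch how the reconstruction would go, following the standard strategy used in \cite{duffie2003} on the canonical state space and adapted to cones in \cite{cuchiero2016affine}.

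First I would establish the Feller property. The Laplace transform representation \eqref{eq:affineprocessK} shows that the semigroup $P_t f(x)=\int f(\xi)p_t(x,d\xi)$ maps exponentials $f_u(x)=e^{-\langle u,x\rangle}$, $u\in K^*$, into continuous functions of $x$ that vanish at infinity (for $u\in (K^*)^\circ$, which exists by Lemma \ref{lem: nonempty}). A Stone--Weierstrass / density argument on the algebra generated by these exponentials, together with stochastic continuity, upgrades this to $P_t:C_0(K_\Delta)\to C_0(K_\Delta)$ with $P_t f\to f$ pointwise as $t\to 0$, hence strongly. This gives the Feller property and, as a by-product, the joint continuity of $(t,u)\mapsto(\phi(t,u),\psi(t,u))$ on $\mathbb R_+\times K^*$.

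Next I would derive the semiflow equations. Chapman--Kolmogorov combined with \eqref{eq:affineprocessK} yields, for all $s,t\ge0$ and $u\in K^*$,
\begin{align*}
\phi(t+s,u)&=\phi(t,u)+\phi(s,\psi(t,u)),\\
\psi(t+s,u)&=\psi(s,\psi(t,u)),
\end{align*}
so $\psi$ is a semiflow on $K^*$ and $\phi$ is an additive cocycle. Differentiability at $t=0$ is the technical core: one uses the joint continuity above, the strict positivity $e^{-\phi-\langle\psi,x\rangle}\le 1$, and a standard argument (as in \cite[Sect.~7]{duffie2003}) to show that the right derivatives $F(u):=\partial_t^+\phi(t,u)|_{t=0}$ and $R(u):=\partial_t^+\psi(t,u)|_{t=0}$ exist and are continuous; the semiflow equations then promote these to two-sided derivatives and yield \eqref{eq:Riccati}.

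The main obstacle, and the bulk of the work, is establishing the L\'evy--Khintchine-type representations \eqref{eq:FLevyK}--\eqref{eq:RLevyK} together with the admissibility conditions \ref{eq:const_drift1}--\ref{eq:lin_drift1}. Here one exploits that, because $p_t(x,\cdot)$ is sub-probability, the maps $u\mapsto F(u)$ and $u\mapsto\langle R(u),x\rangle$ (for each fixed $x\in K$) are \emph{continuous negative definite} functions on the cone $K^*$ in the sense of Berg--Forst; a L\'evy--Khintchine representation on cones (cf.\ \cite[Sect.~5]{cuchiero2016affine}) then forces $F$ and $R$ to take the stated form. The admissibility constraints are then derived one by one by testing against appropriate $u\in K^*$ and $x\in K$: for instance, $c\ge 0$ and $\gamma\in K^*$ come from evaluating at $u=0$; $Q(v,v)\in K^*$ from the second-order behaviour of $\langle R(u),x\rangle$ at boundary points where $\langle u,x\rangle=0$; the boundary condition \ref{eq:lin_drift1} encodes the fact that the linear drift (after jump compensation) must be quasimonotone increasing with respect to $K$, guaranteeing that $\psi(t,\cdot)$ preserves $K^*$; and the integrability conditions on $m,\mu$ are dictated by local integrability of $F,R$ at zero. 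I expect the checking of these admissibility conditions at the boundary $\{x\in K:\langle u,x\rangle=0\}$ to be the most delicate step, as it mirrors the \og inward-pointing\fg\ requirements and is precisely where the cone structure from Section \ref{sec: order} and Proposition \ref{prop 1} plays its role. Since all of this is carried out in full detail in \cite{cuchiero2016affine}, in the actual write-up I would simply cite that theorem.
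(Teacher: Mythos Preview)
Your proposal is correct: the paper itself gives no proof of this theorem, simply citing \cite[Theorem 2.4, Part I]{cuchiero2016affine}, and you have correctly identified this. The sketch you provide is a faithful outline of the standard argument carried out in that reference (Feller property via the exponential family, semiflow equations from Chapman--Kolmogorov, regularity at $t=0$, and the L\'evy--Khintchine representation with admissibility constraints), so for the write-up a bare citation is exactly what is needed.
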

By the defining property \eqref{eq:affineprocessK} and the generalized Riccati differential equations
\eqref{eq:Riccati}, for any $x\in K$, and $u\in K^*$ we have for $f_u(x):=e^{-\langle u,x\rangle}
$ that
\begin{equation}\label{eq: gen bounded}
\lim_{t\downarrow 0}\frac{P_t f_u(x)-f_u(x)}{t}=(-F(u)-\langle R(u),x\rangle)f_u(x).
\end{equation}
This limit also holds in supremum norm, due to the Feller property of $X$ (we shall denote by $\mathcal D(\mathcal A)$ the domain of the infinitesimal generator $\mathcal{A}$ of $X$, that is the generator of the
associated Feller semigroup).
\begin{proposition}\label{prop action exp}
For any $u\in (K^*)^\circ$, we have $ f_u\in \mathcal D(\mathcal A)$, and
\begin{equation}
\mathcal A f_u(x)=(-F(u)-\langle R(u),x\rangle)f_u(x).
\end{equation}
\end{proposition}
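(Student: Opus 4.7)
The plan is to upgrade the pointwise limit in \eqref{eq: gen bounded} to convergence in supremum norm, which by the Feller property of $X$ is exactly the criterion for $f_u \in \mathcal{D}(\mathcal{A})$. The geometric fact that enables uniform bounds, supplied by Lemma \ref{lem: nonempty}, is that $u \in (K^*)^\circ$ yields a constant $c > 0$ with $\langle u, x\rangle \geq c\|x\|$ for all $x \in K$; consequently $f_u(x) \leq e^{-c\|x\|}$ and $\|x\|^k f_u(x)$ is bounded on $K$ for every $k \geq 0$. By continuity of $\psi(\cdot, u)$ at $0$ and openness of $(K^*)^\circ$, $\psi(t,u) \in (K^*)^\circ$ for small $t$, so that $f_u$ and $P_t f_u$ both sit in $C_0(K)$.

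Using the affine transform formula \eqref{eq:affineprocessK} I would write
$$\frac{P_t f_u(x) - f_u(x)}{t} - \mathcal{A}f_u(x) = f_u(x)\left[\frac{e^{-\Delta(t,x)} - 1 + \Delta(t,x)}{t} + \left(F(u) + \langle R(u), x\rangle - \frac{\Delta(t,x)}{t}\right)\right],$$
where $\Delta(t,x) := \phi(t,u) + \langle \psi(t,u) - u, x\rangle$ and $\mathcal{A}f_u(x)$ denotes the candidate value $(-F(u) - \langle R(u), x\rangle)f_u(x)$. Integrating the Riccati equations \eqref{eq:Riccati}, $\Delta(t,x)/t$ equals the time-average on $[0,t]$ of $F(\psi(s,u)) + \langle R(\psi(s,u)), x\rangle$, so the second bracketed term becomes a time-average of $[F(u) - F(\psi(s,u))] + \langle R(u) - R(\psi(s,u)), x\rangle$. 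By continuity of $F$, $R$ and $\psi(\cdot, u)$ this quantity is bounded, uniformly over $s \in [0,t]$, by $\omega_1(t) + \omega_2(t)\|x\|$ with $\omega_i(t) \to 0$; after multiplication by $f_u(x)$, boundedness of $f_u$ and of $\|x\|f_u(x)$ on $K$ gives uniform decay to zero.

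For the first bracketed term, I would use the elementary inequality $|e^{-y} - 1 + y| \leq (y^2/2)e^{|y|}$, valid for all real $y$, together with $\phi(t,u), \|\psi(t,u) - u\| = O(t)$. Choosing $t$ small enough that $\|\psi(t,u) - u\| \leq c/2$ ensures that $f_u(x) e^{|\Delta(t,x)|}$ still decays like $e^{-c\|x\|/2}$ (up to a bounded prefactor), and coupled with $\Delta(t,x)^2/t = O(t)(1 + \|x\|^2)$ the supremum over $x \in K$ is $O(t)$. The main obstacle is precisely this uniformity step: a naive estimate fails because both $\langle R(u), x\rangle$ and $\Delta(t,x)$ grow without bound in $\|x\|$; the interior condition $u \in (K^*)^\circ$, through the uniform exponential decay of $f_u$, is exactly what dominates these polynomially growing error terms. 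Combining both estimates yields sup-norm convergence and hence $f_u \in \mathcal{D}(\mathcal{A})$ with the claimed generator action.
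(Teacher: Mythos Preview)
Your proof is correct and proceeds along the same lines as the paper's argument, which simply defers to \cite[Proposition 4.12]{cuchiero} after noting the key inequality $\langle u,x\rangle\geq\lambda_u\|x\|$ for $u\in(K^*)^\circ$ (obtained from compactness of the unit sphere in $K$ together with the characterization in Lemma~\ref{lem: nonempty}). You have in effect written out the deferred argument in full: the decomposition of the difference quotient into a second-order remainder term and a time-averaged continuity term, controlled respectively by $|e^{-y}-1+y|\leq\tfrac{y^2}{2}e^{|y|}$ and the continuity of $F,R,\psi$, with the exponential decay of $f_u$ absorbing all polynomial growth in $\|x\|$. One minor remark: Lemma~\ref{lem: nonempty} alone gives only strict positivity of $\langle u,x\rangle$ on $K\setminus\{0\}$; the uniform constant $c$ requires the additional compactness step, which the paper makes explicit.
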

\begin{proof}
The statement is a slight adaption of the proof of \cite[Proposition 4.12]{cuchiero} to cones $K$, the only 
difference being that our cones are not necessarily symmetric\footnote{A convex cone is called symmetric, if it is self-dual and if the linear automorphism group acts transitively on it.}: By compactness of the unit sphere $\{x\in K\mid\|x\|=1\}$, for any $u\in (K^*)^\circ\subset K^*$, there exists a constant $\lambda_u>0$ such that $\langle u,x\rangle\geq \lambda_u \|x\|$ for any $x\in K$. Hence $f_u\in C_0(K)$. The rest of the proof is exactly the same as \cite[Proposition 4.12]{cuchiero} .
\end{proof}
We end this section by giving a characterization of conservative affine processes, and an easy-to-check
sufficient condition for non-explosion.\footnote{A process is non-explosive (or conservative), if the transition function does not charge the cemetery state $\Delta$.}
\begin{proposition}\label{prop cons}
Let $X$ be an affine process on $K$. The following are equivalent:
\begin{enumerate}
\item \label{cons 1} $X$ is conservative.
\item \label{cons 2} $c=0$, $\gamma=0$, and $0$ is the only $K^*$--valued solution of the generalized Riccati differential equation
\begin{equation}\label{eq: start zero}
\partial_t\psi(t)=R(\psi(t)),\quad \psi(0)=0.
\end{equation}
\end{enumerate}
In particular, if $c=0$, $\gamma=0$ and for some $x_0\in K^\circ$,
\begin{equation}\label{cons 3}
\int_{\|\xi\|\geq 1} \|\xi\| \langle \mu(d\xi),x_0\rangle<\infty,
\end{equation}
then \ref{cons 2} is satisfied, and thus $X$ is non-explosive.
\end{proposition}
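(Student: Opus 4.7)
The plan proceeds in two stages: the equivalence (i)$\Leftrightarrow$(ii) by evaluating the defining Laplace transform at $u=0$, and the sufficient non-explosion condition by establishing local Lipschitz continuity of the Riccati vector field $R$ near the origin.

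For the equivalence, substituting $u=0$ into \eqref{eq:affineprocessK} (where $f_0\equiv 1$) gives $p_t(x,K)=\exp(-\phi(t,0)-\langle \psi(t,0),x\rangle)$, so $X$ is conservative iff the affine function $x\mapsto \phi(t,0)+\langle \psi(t,0),x\rangle$ vanishes identically on $K$ for every $t\geq 0$. Because $K$ has non-empty interior and $K-K=V$ (Lemma \ref{lem: nonempty}), this is in turn equivalent to $\phi(\cdot,0)\equiv 0$ and $\psi(\cdot,0)\equiv 0$. Evaluating \eqref{eq:Riccati} at $u=0$ while reading off $F(0)=c$ and $R(0)=\gamma$ from \eqref{eq:FRLevyK} forces $c=\gamma=0$ and exhibits $\psi\equiv 0$ as a $K^*$-valued solution of \eqref{eq: start zero}. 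The direction (ii)$\Rightarrow$(i) is then immediate: uniqueness pins down $\psi(\cdot,0)\equiv 0$, whence $\partial_t\phi(t,0)=F(0)=0$ yields $\phi(\cdot,0)\equiv 0$. The reverse uniqueness claim in (i)$\Rightarrow$(ii) does not follow from the $u=0$ identity alone and must be inherited from the uniqueness of the Feller semigroup associated with an admissible parameter set, as developed in \cite{cuchiero2016affine}: any second $K^*$-valued solution of \eqref{eq: start zero} would produce a distinct candidate exponent and hence a second affine transition law sharing the same parameters, contradicting that uniqueness.

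For the sufficient condition, assume $c=\gamma=0$ and \eqref{cons 3}. The constant $\psi\equiv 0$ solves \eqref{eq: start zero} because $R(0)=\gamma=0$, so the uniqueness part reduces to showing that $R$ in \eqref{eq:RLevyK} is locally Lipschitz on a neighbourhood of $0\in V$; Picard--Lindel\"of then yields local uniqueness, promoted globally by a standard continuation argument (at any later time the $K^*$-valued competitor again starts at $0$ by local uniqueness). The quadratic $-\tfrac12 Q(u,u)$ and linear $B^\top(u)$ are smooth, so only the jump integral
\[
G(u):=\int_K\bigl(e^{-\langle u,\xi\rangle}-1+\langle \chi(\xi),u\rangle\bigr)\mu(d\xi)
\]
requires attention. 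Its formal Fr\'echet derivative is $DG(u)v=\int_K \langle v,\chi(\xi)-\xi e^{-\langle u,\xi\rangle}\rangle\,\mu(d\xi)$, which is $V$-valued because $\mu$ is $K^*$-valued. Splitting at $\|\xi\|=1$: on $\{\|\xi\|\leq 1\}$ one has $\chi(\xi)=\xi$ near the origin and a Taylor bound shows the integrand is $O(\|\xi\|^2)$ uniformly in $u$ on bounded sets, so admissibility \ref{eq:lin_measure1} controls this part; on $\{\|\xi\|>1\}$ the integrand is dominated pointwise by $C\|\xi\|$ uniformly in $u$, so the remaining requirement is $\int_{\|\xi\|>1}\|\xi\|\,\langle x,\mu(d\xi)\rangle<\infty$ for every $x\in K$. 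Since $x_0\in K^\circ$, Lemma \ref{lem: nonempty} combined with compactness of the unit sphere of $K^*$ yields $\delta>0$ with $\langle u,x_0\rangle\geq \delta\|u\|$ for all $u\in K^*$; writing $\mu(d\xi)=h(\xi)\,|\mu|(d\xi)$ with $h(\xi)\in K^*$ of unit norm, the pointwise estimate $\langle x,h(\xi)\rangle\leq \|x\|\leq \delta^{-1}\|x\|\langle x_0,h(\xi)\rangle$ integrates to
\[
\int_{\|\xi\|>1}\|\xi\|\,\langle x,\mu(d\xi)\rangle\leq \delta^{-1}\|x\|\int_{\|\xi\|>1}\|\xi\|\,\langle x_0,\mu(d\xi)\rangle<\infty
\]
by \eqref{cons 3}. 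Dominated convergence now gives continuity of $DG$ near $0$, so $G\in C^1$ and $R$ is locally Lipschitz, as required.

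The principal obstacle is the uniqueness half of (i)$\Rightarrow$(ii), which lies beyond what elementary Laplace-transform manipulations can deliver and must be imported from the affine-semigroup uniqueness theory of \cite{cuchiero2016affine}. Once $R$ is shown locally Lipschitz, the remaining Riccati uniqueness and the resulting non-explosion are routine consequences of the norm-equivalence of $\langle\cdot,x_0\rangle$ and $\|\cdot\|$ on $K^*$ granted by Lemma \ref{lem: nonempty}.
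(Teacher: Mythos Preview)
Your argument for the sufficient non-explosion criterion contains a genuine gap, one the paper itself flags in a footnote. You assert that on $\{\|\xi\|>1\}$ the integrand of $DG(u)$ is dominated pointwise by $C\|\xi\|$ uniformly for $u$ in a real neighbourhood of $0$ in $V$. This is false: for $u\notin K^*$ there exist $\xi\in K$ with $\langle u,\xi\rangle<0$, and along such directions $\|\xi\|e^{-\langle u,\xi\rangle}$ grows exponentially in $\|\xi\|$. Consequently $R$ need not even be well defined, let alone $C^1$, on any open real neighbourhood of $0$ unless $\mu$ has exponential moments; condition \eqref{cons 3} is only a first-moment hypothesis and cannot deliver this. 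The paper circumvents the problem by complexifying: it shows that $v\mapsto R(iv)$ is locally Lipschitz on $V$ (for purely imaginary arguments $|e^{-i\langle v,\xi\rangle}|\equiv 1$, so no growth issue arises), then observes that any $K^*$-valued solution $\psi$ of \eqref{eq: start zero} yields a solution $\chi(t)=-i\psi(t)$ of $\partial_t\chi=R(i\chi)$, $\chi(0)=0$, which must vanish by Picard--Lindel\"of applied on the open set $V$.

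For the equivalence (i)$\Leftrightarrow$(ii), your reduction via the $u=0$ Laplace transform is fine, but the mechanism you propose for the uniqueness direction of (i)$\Rightarrow$(ii)---that a second $K^*$-valued solution ``would produce a distinct candidate exponent and hence a second affine transition law''---does not work as stated. A second solution of the single initial-value problem at $u=0$ does not by itself manufacture a full family $(\phi(t,u),\psi(t,u))_{u\in K^*}$, so it does not produce a competing transition semigroup to which one could apply uniqueness of the Feller law. The paper instead refers to \cite[Theorem 3.4]{mayerhofermuhle} and uses the quasi-monotonicity of $R$ with respect to $K^*$ (\cite[Proposition 3.12]{cuchiero2016affine}) to run a comparison argument identifying the semigroup-induced $\psi(\cdot,0)$ among all $K^*$-valued solutions.
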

\begin{proof}
The characterization \ref{cons 1} $\Leftrightarrow$ \ref{cons 2} can be proved along the lines \cite[Theorem 3.4]{mayerhofermuhle}, using the quasi-monotonicity of the function $u\rightarrow R(u)$ with respect to $K^*$ (see \cite[Proposition 3.12]{cuchiero2016affine}).

Note that the positive measure $(\|\xi\|^2 \wedge 1) \langle\mu(d\xi),x_0\rangle$ is finite, whence by \cite[Lemma A.1 for $k=1$]{duffie2003} condition \eqref{cons 3} implies that 
\[
\varphi_x: V\rightarrow \mathbb C:\quad v\mapsto\langle R(iv),x\rangle
\]
is $C^1$ for $x=x_0$, and thus in particular, locally Lipschitz. Since $x_0\in K^\circ$, for any $x\in K$ there exists $\lambda_x>0$ such that $x\preceq \lambda_x x_0$, whence \eqref{cons 3} indeed holds for any $x\in K$, and therefore $\varphi_x$ is locally Lipschitz, for any $x\in K$. Since $K-K=V$ we infer that $V\rightarrow\mathbb C$, $v\mapsto R(iv)$ is locally Lipschitz. Suppose, $\psi(t)$ is a $K^*$--valued solution of \eqref{eq: start zero}. Then $\chi(t)=-i\psi(t)$ solves the complex-valued generalized
Riccati differential equation
\[
\partial_t \chi=R(i\chi),\quad \chi(0)=0.
\]
By local Lipschitz continuity of $R$ on $iV$, $\chi\equiv 0$, thus $\psi\equiv 0$. Therefore, \ref{cons 2}
is satisfied and thus $X$ is conservative.\footnote{Our argument is longer than similar proofs (e.g., \cite{cuchiero2016affine, mayerhofermuhle, cuchiero}) which
overlook the fact that Lipschitz continuity does not necessarily hold in the real domain near zero, but only for the purely imaginary argument. In fact, for $R(u)$ being defined in a real neighborhood of zero,
one needs the existence of exponential moments.}
\end{proof}
\subsection{Markov processes and ergodicity}\label{sec: ergodicity}
In this section, $X=(X_t)_{t\ge0}$ is a continuous-time Markov process (please see the standard literature like \cite{DMT1995,MT2,MT3,MT} for definitions of notions for Markov processes not explained here) on a locally compact, separable metric space $D$ with transition probabilities $p_t(x,A)=\mathbb P^x[X_t \in A]$ for $x\in D, A \in \mathcal B(D)$. We assume that $X$ is a non-explosive Borel right process. 
We denote with $\pi$ the unique invariant measure of $X$, if it exists. 
Exponential ergodicity means that the total variation distance of the transition semigroup and the invariant measure converge to zero independent of the initial state $x$ and with an exponential rate. A ``seemingly stronger"
concept\footnote{The phrase ``seemingly stronger" stems from \cite{DMT1995}. In fact, only for $f=1$ , the two formulations coincide -- then the total variation norm of a signed measure equals its $f$-norm. However, due to \cite[Chapter 3 and Theorem 2.1]{DMT1995}, for Markov chains, $f$-geometric ergodicity is equivalent with
weaker types of geometric ergodicity.} of this is the $f$-uniform ergodicity, which demands the convergence in the $f$-norm\footnote{The $f$-norm (of a signed measure $\mu$) is defined for every measurable function from the state space $D$ to $[1, \infty)$ by $\|\mu\|_f:=\sup_{|g|\le f} | \int \mu(dy)g(y)|$. }
instead of the total variation norm: 
\begin{definition}[$f$-uniform ergodicity, {\cite[Chapter 3]{DMT1995}}]
$X$ is called \emph{$ f$-uniformly ergodic}, if a measurable 
function $f: D \to [1,\infty)$ exists such that for all $x \in D$ 
\begin{equation}
\| p_t(x,.) - \pi \|_f \le f(x) C \rho^t, \ t\ge0
\end{equation}
holds for some $C<\infty, \rho <1$.
\end{definition}
Another concept is a probabilistic form of stability, the so called Harris recurrence. 
\begin{definition}[Harris recurrence, {\cite[Chapter 2.2]{MT2}}]
\begin{enumerate}Let $\eta_A := \int_0^\infty 1_{\{X_t \in B\} } dt$ be the occupation time and $\tau_A:= \inf \{t\ge0~:~ X_t \in A \}$ be the first hitting time of $A$. 
\item $X$ is called \emph{Harris recurrent}, if either
\begin{itemize}
\item $\mathbb P^x[\eta_A = \infty]=1$ whenever $\phi(A)>0$ for some $\sigma$-finite measure $\phi$, or
\item $\mathbb P^x[\tau_A < \infty]=1$ whenever $\mu(A)>0$ for some $\sigma$-finite measure $\mu$. 
\end{itemize}
\item Suppose that $X$ is Harris recurrent with finite invariant measure $\pi$, then $X$ is called \emph{positive Harris recurrent}.
\end{enumerate}
\end{definition}
\begin{definition}[{\cite{DMT1995}}, Chapter 3]
 For a $\sigma$-finite measure $\mu$ on $\mathcal B(D)$ we call the process 
$X$ \emph{$ \mu$-irreducible} if for any $A \in  \mathcal B(X)$ 
with $\mu(A)>0$ 
\begin{equation}
 \mathbb E^x (\eta_A) > 0, \forall x \in X.
\end{equation}
\end{definition}
This is obviously the same as requiring 
$
 \int_0^\infty \mathbb{P}^x(X_t\in B) dt >0,\  \forall x \in X.
$
If $X$ is $\mu$-irreducible, there exists a maximal irreducibility measure $\psi$ 
such that every other irreducibility measure $\nu$ is absolutely continuous with respect to $\psi$ (see \cite[p. 493]{MT2}). 
We write ${\mathcal B^+(D)}$ for the collection of all measurable subsets $A\in \mathcal B(X)$ with $\psi(A)>0$.
\begin{definition}[{\cite{DMT1995}}, Chapter 3] \label{def:aperiodic}
\begin{enumerate}
\item A nonempty set $C\in \mathcal B(D)$ is called \emph{small}, if there exists an $m>0$ and a nontrivial measure $\nu_m$ on $\mathcal B(D)$ such that for all $x\in C,A\in \mathcal B(D)$
 \begin{equation}
 \mathbb P^x [X_t\in A] \ge \nu_m(A).
 \end{equation} 
 \item A $\psi$-irreducible Markov process is called \emph{aperiodic} if for some small set $C \in \mathcal B^+(X)$ there exists a $T$ 
 such that 
 $\mathbb P^x[X_t\in C] >0$ for all $t\ge T$ and all $x\in C$. 
\end{enumerate}
\end{definition}
For irreducible and aperiodic Markov processes  a sufficient condition for $f$-uniform ergodicity is the so-called Foster-Lyapunov drift condition:
\begin{theorem}[{\cite{DMT1995}}, Theorem 5.2] \label{dmtergodicity} 
Let $(X_t)_{t\ge 0}$ be a $\mu$-irreducible and aperiodic Markov process. If 
there exist constants $b,c > 0$ and a petite set $C$ in $\mathcal B(D)$ as well as a measurable function $f: 
D \to [1,\infty)$ (referred to as Lyapunov function) such that \begin{equation}
\mathcal A f \le -bf + c 1_C ,
\end{equation}
where $\mathcal A$ is the extended generator, then $X$ is $f$-uniformly ergodic.
\end{theorem}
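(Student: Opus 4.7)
The plan is to reduce the continuous-time statement to the well-studied discrete-time theory by sampling the process at a fixed time grid, and then to use the drift condition to establish geometric bounds on return times to the petite set $C$. The first step is to apply Dynkin's formula (valid because $f$ is in the domain of the extended generator) to obtain, for any stopping time $\sigma$ with $\mathbb{E}^x\sigma<\infty$,
\[
\mathbb{E}^x f(X_\sigma) \le f(x) - b\,\mathbb{E}^x\!\int_0^\sigma f(X_s)\,ds + c\,\mathbb{E}^x\!\int_0^\sigma \mathbf{1}_C(X_s)\,ds.
\]
Taking $\sigma=\tau_C$ (the first hitting time of $C$) yields $b\,\mathbb{E}^x\!\int_0^{\tau_C} f(X_s)\,ds \le f(x)$, and since $f\ge 1$, this gives $\mathbb{E}^x\tau_C \le f(x)/b$. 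A geometric sharpening (via replacing $f$ by $e^{\beta t}f(X_t)$ for small $\beta>0$, or by an exponential change of measure as in \cite{DMT1995}) upgrades this to an exponential moment $\mathbb{E}^x e^{\beta \tau_C} \le K f(x)$ for some $\beta,K>0$.

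The second step is to pass to an appropriate skeleton chain. Fix a sampling interval $\delta>0$ and consider the Markov chain $X^\delta_n:=X_{n\delta}$. Aperiodicity combined with $C$ being petite implies, via \cite[Theorem 5.5.7]{MT2} and standard continuous-time/discrete-time translations, that $C$ is small for some $\delta$-skeleton; moreover the petite property yields a minorization of the form $p_{m\delta}(x,\cdot) \ge \nu(\cdot)$ uniformly on $C$. The drift condition transfers to the skeleton: integrating $\mathcal{A}f\le -bf+c\mathbf{1}_C$ over $[0,\delta]$ gives $P_\delta f \le e^{-b\delta}f + c'\mathbf{1}_C$ (after absorbing visits to $C$ during the interval into the constant using the exponential return-time tails just obtained). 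This is exactly the discrete-time geometric drift condition.

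The third step is to invoke the discrete-time $f$-geometric ergodicity theorem of \cite{MT2} for the skeleton chain: minorization on a small set plus geometric drift yields $\|p_{n\delta}(x,\cdot)-\pi\|_f \le C_1 f(x)\rho^n$. Finally, one interpolates back to continuous time: for $t=n\delta+s$ with $s\in[0,\delta)$, the semigroup property and the uniform bound $\sup_{s\le\delta} P_s f \le K_1 f$ (again from the drift condition) yield $\|p_t(x,\cdot)-\pi\|_f \le C_2 f(x)\rho^{t/\delta}$, which is the desired $f$-uniform ergodicity.

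The main obstacle is the second step: marrying the continuous-time petite/aperiodic assumption with a discrete-time minorization and obtaining a clean geometric drift inequality for the skeleton. The argument is delicate because petiteness is inherently a sampled-resolvent notion, and one must use aperiodicity plus the absorbing-time estimate to ensure that the same $\delta$ works both for the minorization and for an honest geometric contraction in the drift. Once the skeleton satisfies both conditions simultaneously, the rest is routine.
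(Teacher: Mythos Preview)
The paper does not prove this statement at all: Theorem~\ref{dmtergodicity} is quoted verbatim from \cite[Theorem~5.2]{DMT1995} as a background tool, with no accompanying argument. There is therefore nothing in the paper to compare your proposal against.

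That said, your sketch is broadly aligned with the original proof in \cite{DMT1995}, which also proceeds by transferring the continuous-time drift condition to a skeleton chain (their Theorem~5.1) and then invoking the discrete-time $f$-geometric ergodicity theory of \cite{MT}. One point of imprecision in your step two: integrating the drift inequality over $[0,\delta]$ does not directly yield $P_\delta f \le e^{-b\delta} f + c'\mathbf{1}_C$ with the indicator on the right; what one actually obtains from $\mathcal{A}f \le -bf + c$ is the global bound $P_\delta f \le e^{-b\delta} f + c(1-e^{-b\delta})/b$, and the indicator is recovered only after restricting to a suitable sublevel set of $f$ and arguing that this set is petite for the skeleton. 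The passage from continuous-time petiteness and aperiodicity to skeleton smallness is handled in \cite{DMT1995} via their Proposition~6.1, which is the genuine technical content you allude to as the ``main obstacle.'' Your outline is a fair high-level summary of that route, but the details you flag as delicate are indeed where the work lies in the original.
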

\begin{definition}[Extended generator, {\cite[Chapter 4]{DMT1995}}]
$\mathcal D (\mathcal{A})$ denotes the set of all functions $f: D\times \R_{+} \to \R$ 
for which a function $g: D \times \R_{+} \to \R$ exists, such that $\forall x 
\in D, t>0$
\begin{align}
\label{exgen1} &\mathbb E^{x}(f(X_{t},t)) = f(x,0) +\mathbb E^{x}\left( \int_{0}^{t}g(X_{s},s)ds\right), \\
\label{exgen2} &\mathbb E^{x}\left( \int_{0}^{t}|g(X_{s},s)|ds\right) < \infty
\end{align}
holds. We write {$\mathcal{A} f := g$} and call $\mathcal{A}$ the \emph{extended generator of }{ $X$}. 
$\mathcal D(\mathcal{A})$ is called the domain of $\mathcal{A}$. 
\end{definition}
The above definition of the extended generator is the usual one in the papers on geometric ergodicity of Markov processes. Note that in other strands of the literature the definition is often different. For example, many papers only demand that $f(X_{t},t)-f(x,0)- \int_{0}^{t}g(X_{s},s)ds$ is a local martingale. This is the case e.g. in \cite{cuchiero2012polynomial}. Their Theorem 2.7 and the preceding discussion, however, ensures that their extended generator for $m$-polynomial processes with $m\geq 2$ is also the extended generator in our sense and all polynomials of order at most $m$ are in its domain.

For some $\Delta>0$ we call the Markov chain $(X_{k\Delta})_{k\in\mathbb{N}}$ a ($\Delta$-)skeleton chain.

We skip the definition of a petite set since we use the following result, which ensures petiteness of compact sets (note that only
irreducibility is assumed, but not aperiodicity):
\begin{proposition}[{\cite[Theorem 6.0.1]{MT}}]\label{prop:compactpetite}
If $Y$ is a $\Psi$-irreducible Feller chain with $\text{supp}(\Psi)$ having non-empty interior, every compact set is petite.
\end{proposition}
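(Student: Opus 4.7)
The statement is cited verbatim as Theorem 6.0.1 of \cite{MT}, so only a strategy is outlined here. The plan is to exhibit $Y$ as a so-called \emph{T-chain} and then invoke the standard fact that, for a $\Psi$-irreducible T-chain, every compact set is petite (Theorem 6.2.5 of \cite{MT}). Recall that a T-chain is one for which the sampled kernel
\[
K_a(x,A):=\sum_{n\geq 0}a(n)P^n(x,A)
\]
admits, for some probability distribution $a$ on $\mathbb{N}$, a minorization $K_a(x,\cdot)\geq T(x,\cdot)$ by a substochastic kernel $T$ with $T(x,D)>0$ for all $x$ and $x\mapsto T(x,A)$ lower semicontinuous for every Borel $A$.

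First, I would fix any point $x_0\in \text{supp}(\Psi)^\circ$ and an open neighborhood $U$ of $x_0$ with $\overline{U}\subset\text{supp}(\Psi)$. By $\Psi$-irreducibility, for every $x\in D$ there exists some $n=n(x)$ with $P^n(x,U)>0$. Choosing $a$ to be geometric, this translates into $K_a(x,U)>0$ for every $x$. The Feller property of $Y$ implies that $P^n$ maps $C_b(D)$ into $C_b(D)$, hence $x\mapsto K_a(x,f)$ is continuous for each $f\in C_b(D)$; in particular the Portmanteau theorem yields lower semicontinuity of $x\mapsto K_a(x,G)$ for every open $G$. Using Urysohn's lemma to sandwich indicator-type functions between continuous ones, one extracts around each $x\in D$ an open neighborhood $V_x$, a constant $c_x>0$, and a fixed nontrivial sub-probability measure $\nu$ supported in $U$, such that $K_a(y,\cdot)\geq c_x\,\nu(\cdot)$ for all $y\in V_x$. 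This is precisely the T-chain property; crucially, the interior assumption on $\text{supp}(\Psi)$ is what allows $\nu$ to be \emph{common} to the local minorizations, by choosing it supported on an open set inside the interior.

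Given a compact $C\subset D$, extract a finite subcover $V_{x_1},\dots,V_{x_m}$ of $C$. Setting $c:=\min_i c_{x_i}>0$, we obtain $K_a(x,\cdot)\geq c\,\nu(\cdot)$ uniformly for $x\in C$, so $C$ is $(a,c\nu)$-petite. The main technical hurdle is the step where the local minorizations are patched into a single uniform one; this is where the non-empty interior of $\text{supp}(\Psi)$ is indispensable, since otherwise the minorizing measures produced point by point might concentrate on disjoint or shrinking sets and could not be lower bounded by one fixed $\nu$.
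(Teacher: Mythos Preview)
The paper does not give its own proof of this proposition; it is quoted verbatim from \cite[Theorem~6.0.1]{MT} and used as a black box. So there is nothing in the paper to compare against, but your sketch of the MT argument deserves scrutiny on its own.

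Your high-level plan --- exhibit $Y$ as a T-chain and then invoke \cite[Theorem~6.2.5]{MT} --- is indeed the route taken in \cite{MT}. The gap is in the step where you claim to extract, around each $x$, a neighborhood $V_x$ on which $K_a(y,\cdot)\geq c_x\,\nu(\cdot)$ for a \emph{fixed} measure $\nu$. Weak Feller only yields lower semicontinuity of $y\mapsto K_a(y,G)$ for open $G$, hence $K_a(y,U)\geq c_x$ on $V_x$; but the conditional distribution $K_a(y,\cdot\cap U)/K_a(y,U)$ can vary arbitrarily with $y$, so nothing forces a single $\nu$ to minorize all of them. Urysohn sandwiching lets you compare total mass on nested open and closed sets via continuous test functions; it does not control the \emph{shape} of the measure, which is what a minorization requires.

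In \cite{MT} the two tasks are kept separate. First one constructs a lower semicontinuous component $T$ of $K_a$ (this is what the T-chain property means --- a kernel-valued minorant, not a single measure $\nu$); the nonempty interior of $\mathrm{supp}(\Psi)$ enters here to guarantee $T(x,D)>0$ everywhere. Then Theorem~6.2.5 shows that in a $\psi$-irreducible T-chain each point admits a petite neighborhood with a \emph{point-dependent} minorizing measure, and a chaining argument through the irreducibility structure upgrades a finite collection of such measures on a compact set to a common one. Your final paragraph collapses these two distinct steps into one, and that is where the argument as written would fail.
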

\section{Geometric ergodicity of affine processes} \label{sec:affine_ge}
The following two sections offer two different sets of sufficient conditions for geometric ergodicity of affine processes on $K$. The main difference between the two statements lies in the assumption of finite moments in the second one, instead of exponential ones in the first one; therefore, also the conclusions concerning the moments of the limiting distributions are different. 

Due to the characterization of affine processes via the exponential-affine form of the Fourier/La\-place transform, considering exponential moments is particularly natural and relevant. 
\subsection{Exponential moments}\label{sec: exp}
\begin{theorem}\label{prop:FLdrift_expo}
Let $X$ be a an affine process on $K$ with $c=0$, $\gamma=0$. Assume that 
\begin{enumerate}
\item \label{thm11} $X$ is $\nu$-irreducible with the support of $\nu$ having non-empty interior and aperiodic,
\item \label{thm12} there exists $\eta_0\in \mathbb (K^*)^\circ$ such that
\begin{equation}\label{eq:conditionbeta2x}
B^\top \eta_0 + \int_{K\setminus\{0\}} \langle\eta_0,\xi-\chi(\xi)\rangle \mu(d\xi) \in - (K^*)^\circ
\end{equation}
and for all $t\geq 0$, and for some $x\in K^\circ$, $\mathbb E^x[e^{\langle\eta_0, X_t\rangle}]<\infty$.
\end{enumerate} 
Then $X$ is conservative, and for any $\eta\preceq \eta_0$, and all $x\in K$
\begin{align} \label{eq:ass_exponentialm}
&\int_{\{ \|\xi\|\ge 1\}} e^{ \langle\eta, \xi\rangle} (m(d\xi)+\langle\mu(d\xi),x\rangle) < \infty
\end{align} 
and $X$ is geometrically ergodic and positive Harris recurrent. Furthermore, the stationary distribution $\pi$ has finite exponential moments which can be computed
with the affine transform formula, i.e. there exists
$h_0>0$ such that for all $v \preceq h_0\eta_0$,
\begin{equation}\label{eq: limiting ATF}
\int_{K\setminus \{0\}} e^{\langle v,\xi\rangle} \pi(d\xi) =e^{-\int_0^\infty F(\psi(s,-v))ds}.
\end{equation}
\end{theorem}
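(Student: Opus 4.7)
The plan is to verify the Foster-Lyapunov drift condition of Theorem \ref{dmtergodicity} for an exponential Lyapunov function $V(x)=e^{\langle\eta,x\rangle}$ with $\eta\in(K^*)^\circ$ chosen slightly below $\eta_0$, combined with the petiteness of compact sets granted by Proposition \ref{prop:compactpetite} and the irreducibility hypothesis \ref{thm11}.

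The opening step is to upgrade the scalar moment hypothesis $\mathbb{E}^x[e^{\langle\eta_0,X_t\rangle}]<\infty$ to the L\'evy measure bound \eqref{eq:ass_exponentialm} and to justify that the Laplace exponents $F,R$ from \eqref{eq:FLevyK}--\eqref{eq:RLevyK} and the affine transform formula \eqref{eq:affineprocessK} extend to a real neighbourhood of $-\eta_0$. This is a direct reading of the L\'evy--Khintchine structure of Theorem \ref{th: main theorem}: the exponential moment of $X_t$ forces exponential integrability of the compensators $m(d\xi)$ and $\langle\mu(d\xi),x\rangle$ at every $\eta\preceq\eta_0$. Since $\eta_0\in(K^*)^\circ$ gives $\langle\eta_0,\xi\rangle\geq \lambda_0\|\xi\|$ on $K$ by Lemma \ref{lem: nonempty}\ref{cone prop 1}, \eqref{eq:ass_exponentialm} implies the linear integrability \eqref{cons 3}, and Proposition \ref{prop cons} yields conservativeness of $X$.

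Setting $\eta=h\eta_0$ with $h>0$ small, the extended affine transform formula together with the verification of \eqref{exgen1}--\eqref{exgen2} via exponential moment bounds at a slightly larger parameter places $V=f_{-\eta}$ in the domain of the extended generator with
\begin{equation*}
\mathcal{A}V(x) = \bigl(-F(-\eta)-\langle R(-\eta),x\rangle\bigr)V(x),
\end{equation*}
exactly as in Proposition \ref{prop action exp}. A direct computation using \eqref{eq:RLevyK} yields $-R(-h\eta_0)/h\to DR(0)\eta_0=B^\top\eta_0+\int_{K\setminus\{0\}}\langle\eta_0,\xi-\chi(\xi)\rangle\mu(d\xi)$ as $h\downarrow 0$, with the quadratic and exponential-Taylor remainders controlled by \eqref{eq:ass_exponentialm}. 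Hypothesis \eqref{eq:conditionbeta2x} places this limit in the open cone $-(K^*)^\circ$, so $R(-\eta)\in(K^*)^\circ$ for $h$ small enough, and Lemma \ref{lem: nonempty}\ref{cone prop 1} produces $\delta>0$ with $\langle R(-\eta),x\rangle\geq\delta\|x\|$ on $K$. Thus $\mathcal{A}V(x)\leq(-F(-\eta)-\delta\|x\|)V(x)$, which yields the drift inequality $\mathcal{A}V\leq -bV+c\,\mathbf{1}_C$ on a closed ball $C$. Since $X$ is Feller (Theorem \ref{th: main theorem}) and its skeleton chains inherit the irreducibility with support of non-empty interior, Proposition \ref{prop:compactpetite} makes $C$ petite, and Theorem \ref{dmtergodicity} delivers $V$-uniform (hence geometric) ergodicity, positive Harris recurrence, and $\int V\,d\pi<\infty$.

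For the limiting transform \eqref{eq: limiting ATF}, I would pass to the limit $t\to\infty$ in $\mathbb{E}^x[e^{\langle v,X_t\rangle}]=e^{-\phi(t,-v)-\langle\psi(t,-v),x\rangle}$ for $v\preceq h_0\eta_0$ (with $h_0$ the $h$ fixed above); the left-hand side converges to $\int e^{\langle v,\xi\rangle}\pi(d\xi)$ by $V$-uniform ergodicity. For the right-hand side the key point is that $\psi(\cdot,-v)$ is exponentially attracted to the origin: $R$ preserves $K^*$ (cf.~\cite[Prop.~3.12]{cuchiero2016affine}), so its linearisation $DR(0)$ is qmi with respect to $K^*$, hence $DR(0)^\top$ is qmi with respect to $K$ by Remark \ref{rem both qmi}. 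Hypothesis \eqref{eq:conditionbeta2x} is then precisely statement \ref{qmi 2} of Theorem \ref{thm: linag} applied to the map $DR(0)^\top$ and the cone $K$ (with $v=\eta_0$), yielding $\tau(DR(0))<0$, so $\psi(t,-v)\to 0$ exponentially for small $-v$; therefore $\phi(t,-v)=\int_0^t F(\psi(s,-v))ds$ converges to the right-hand side of \eqref{eq: limiting ATF}, using $F(0)=0$ and the analyticity of $F$ near the origin granted by \eqref{eq:ass_exponentialm}. The principal obstacle throughout is the opening step, namely transferring the scalar moment bound at $\eta_0$ to L\'evy measure integrability and to an analytic extension of the Riccati/transform formula past $u=0$; once this is in place the exponential Lyapunov function slots cleanly into the Foster-Lyapunov machinery.
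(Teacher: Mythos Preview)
Your proposal follows the same architecture as the paper: derive the L\'evy measure bound \eqref{eq:ass_exponentialm} from the moment hypothesis (the paper invokes \cite[Theorem~2.14(a)]{KRM2015} for this), use it to get conservativeness and analyticity of $F,R$ near $0$, pick $\eta=h_0\eta_0$ small so that $-R(-\eta)\in -(K^*)^\circ$ via the Taylor expansion of $R$, establish the Foster--Lyapunov inequality for $H_\eta(x)=e^{\langle\eta,x\rangle}$, and conclude via Theorem~\ref{dmtergodicity} and Proposition~\ref{prop:compactpetite}. The application of Theorem~\ref{thm: linag} to $DR(0)^\top$ with cone $K$ is exactly what the paper does (though the paper states it more tersely).

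Two technical points deserve sharpening. First, for the membership of $H_\eta$ in the domain of the \emph{extended} generator, the paper does more than appeal to moment bounds at a larger parameter: it proves the Dynkin identity
\[
\mathbb{E}^x[H_{-u}(X_t)]-H_{-u}(x)=\mathbb{E}^x\Big[\int_0^t\bigl(-F(u)-\langle R(u),X_s\rangle\bigr)H_{-u}(X_s)\,ds\Big]
\]
by analytic continuation in $u$ from $(K^*)^\circ\cap B_\delta(0)$ (where Proposition~\ref{prop action exp} and the commutation of semigroup and generator apply directly) to $u=-\eta\in B_\delta(0)$. Your sketch asserts this step but does not indicate the mechanism.

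Second, and more substantively, your argument for the limiting transform formula \eqref{eq: limiting ATF} has a gap. You invoke the local asymptotic stability of $0$ for the Riccati flow to conclude $\psi(t,-v)\to 0$, but this only works for $-v$ in a small ball around the origin, whereas the claim is for \emph{all} $v\preceq h_0\eta_0$; such $v$ can have arbitrarily large norm (take $v$ deep inside $-K^*$). The paper closes this as follows: $H_\eta$-uniform ergodicity gives $\mathbb{E}^x[e^{\langle v,X_t\rangle}]\to\int e^{\langle v,\xi\rangle}\pi(d\xi)$ for every $v\preceq\eta$ (since $e^{\langle v,\cdot\rangle}\le H_\eta$), and because this limit is independent of $x$ while the affine transform formula reads $e^{p(t,v)+\langle q(t,v),x\rangle}$, one deduces $\langle q(t,v),x\rangle\to 0$ for every $x\in K$, hence $q(t,v)\to 0$ since $K-K=V$. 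Only then does $p(t,v)=-\int_0^t F(-q(s,v))\,ds$ converge to the right-hand side of \eqref{eq: limiting ATF}. You should replace the ODE-stability argument by this $x$-independence trick for the full range $v\preceq h_0\eta_0$.
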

\begin{proof}
Since $\eta_0$ is in the interior of the dual cone, Condition \eqref{eq:conditionbeta2x} implies that Condition \eqref{cons 3} of Proposition \ref{prop cons} is satisfied. Hence by Proposition \ref{prop cons}, $X$ is conservative. Since $\mathbb E^x[e^{\langle\eta_0, X_t\rangle}]<\infty$, the monotonicity of the exponential implies $\mathbb E^x[e^{\langle\eta, X_t\rangle}]<\infty$ for any $\eta\preceq \eta_0$. We conclude by \cite[Theorem 2.14 (a)]{KRM2015} that \eqref{eq:ass_exponentialm} 
is satisfied for any $\eta\preceq \eta_0$ and that for any $x\in K$, any $\eta\preceq \eta_0$, and any $t\geq 0$
\begin{equation}\label{eq: ATF}
\mathbb E^x[e^{\langle \eta, X_t)\rangle}]=e^{p(t,\eta)+\langle q(t,\eta),x\rangle},
\end{equation}
where $q$ is a (not necessarily unique\footnote{In the next paragraph we avoid non-uniqueness
by scaling $\eta_0$ sufficiently.}) solution of the generalized Riccati differential equation $\partial_t q=-R(-q)$ with $q(0)=\eta$
and $\partial_t p=-F(-q)$, $p(0)=0$. In particular, $p(t,\eta)=-\int_0^t F(-q(s,\eta))ds$.\footnote{The identification $p(t,\eta)=-\phi(t,-\eta)$
and $q(t,\eta)=-\psi(t,-\eta)$ is necessary, since we use the Laplace transform for the defining affine property, whereas \cite{KRM2015} uses the characteristic function}

Now $u\mapsto R(u)$ and $u\mapsto F(u)$ are analytic functions in a neighborhood of $0$, due to \eqref{eq:ass_exponentialm} and their explicit definitions, see \eqref{eq:FRLevyK}. Pick some $x_0\in K^\circ$. It follows directly from the parametric restrictions (see Theorem \ref{th: main theorem}) that the map $u\mapsto -R(-u)$ is qmi (with respect to the dual cone $K^*$) on the domain
\[
\mathcal U:=\left\{u\in V\mid \int_{\{ \|x\|\ge 1\}} e^{ \langle u, \xi\rangle} (m(d\xi)+\langle\mu(d\xi),x_0)\rangle) < \infty\right\}.\footnote{The definition is independent of the choice of $x_0\in K^\circ$, because for any $y_0\in K^\circ$ there exists $h>0$ such that for any $u\in K^*$ we have $h\langle y_0,u\rangle\leq \langle x_0,u\rangle\leq h^{-1}\langle y_0,u\rangle$.}
\]
Furthermore, by assumption \eqref{eq:conditionbeta2x},
\[
\frac{\partial R}{\partial u}\big|_{u=0}(\eta_0)= B^\top \eta_0 + \int_{K\setminus\{0\}} \langle\eta_0,\xi-\chi(\xi)\rangle \mu(d\xi) \in - (K^*)^\circ.
\]
Thus Theorem \ref{thm: linag} is applicable and guarantees that $\tau(\frac{\partial R}{\partial u}\mid_{u=0})<0$. The latter implies
by a well known result of Lyapunov (\cite{lyapunov1992general}) that $0$ is an asymptotically stable equilbrium of the ODE
$\partial_t q=-R(-q)$. In fact, there exists $\delta>0$ such that $B_\delta(0):=\{u\in V\mid \|u\|\leq \delta\}\subset \mathcal U^\circ$ (where $R$ is analytic). Hence, for any $u\in B_\delta(0)$ the (unique) solution $q(t,u) $ exists for all times $t\geq 0$ and satisfies $\delta\geq\|q(t,u)\|\rightarrow 0$, as $t\rightarrow \infty$. Furthermore, it is geometrically stable (see e.g. \cite[Theorem 7.1]{Verhulst1990}), that is, for all $\|u\|\leq \delta$, and all $t\geq 0$, $\|q(t,u)\|\leq c e^{-dt}$ for some constants $c,d>0$. Therefore also $p(t,v)\rightarrow \int_0^\infty -F(-q(s,u))ds$
as $t\rightarrow \infty$. Therefore, for $\eta\in B_\delta(0)$, the affine transform formula \eqref{eq: ATF} holds, where $(p,q)$ is the unique, global solution of the generalized
Riccati equations. Due to asymptotic stationarity of the ODE, we have therefore shown \eqref{eq: limiting ATF} for $v\in B_\delta(0)$ (this fact is used next, while the extension to any $v\preceq h\eta_0$ is done in the last paragraph of this proof).

By Taylor's formula, we have $-R(-h\eta_0)=h DR_{\eta_0}(0)+o(h^2)$, hence there exists $0<h_0< 1$ such that for $\eta=h_0\eta_0$, $-R(-\eta)\in -(K^*)^\circ$. Pick this $h_0$ sufficiently small such that also $h_0\eta_0\in B_\delta(0)$, and set $\eta:=h_0\eta_0$. Since $\eta$ is in the interior of the dual cone, the map $H_\eta: K\rightarrow [1,\infty)$, $H_\eta(x):=e^{\langle \eta,x\rangle}$ has range equal to $[1,\infty)$. Since by assumption, $\mathbb E^x[H_{\eta_0}(X_t)]<\infty$, for any $t>0$, also $\mathbb E^x[H_{\eta}(X_t)]<\infty$ and since $\eta_0\in (K^*)^\circ$, there exists a constant $L>0$ such that for all $x\in K$
\[
\|x\| e^{\langle \eta,x\rangle}\leq L e^{\langle \eta_0,x\rangle},
\]
which implies that
\[
\mathbb E^x\left [\int_0^t  H_\eta(X_s) \vert F(-\eta)+\langle R(-\eta),X_s\rangle\vert ds\right]<\infty.
\]
Therefore, to show that $H_\eta$ belongs to the domain of the extended generator, it suffices to establish
that
\[
\mathbb E^x[H_\eta(X_t)]=H_\eta(x)+\mathbb E^x\left [\int_0^t  H_\eta(X_s) \left(-F(-\eta)+\langle -R(-\eta),X_s\rangle\right)ds\right].
\]
Taking into account the affine transform formula \eqref{eq: ATF}, it is therefore enough to prove that for any $u\in B_\delta(0)$
\begin{equation}\label{eq: super toyota}
e^{-\phi(t,u)-\langle \psi(t,u), x\rangle}-H_{-u}(x)=\mathbb E^x\left [\int_0^t  H_{-u}(X_s) \left(-F(u)-\langle R(u),X_s\rangle\right)ds\right].
\end{equation}
Note that both left and right side of \eqref{eq: super toyota} are analytic functions on $B_\delta(0)$. Since any open set is a set of uniqueness for analytic functions, the analytic functions on the left and right sides of
\eqref{eq: super toyota} must agree, if they agree only on $B_\delta(0)\cap (K^*)^\circ$ (cf.~\cite[(9.4.2)]{dieudonne1969foundations}). And indeed, the identity \eqref{eq: super toyota} holds for any $u\in (K^*)^\circ$, because in this case $H_{-u}\in \mathcal D(\mathcal A)$, which first implies\footnote{This follows directly from the Feller property as stated in Theorem \ref{th: main theorem}. For a direct proof, see \cite[Lemma 3.2]{cuchiero2016affine}.} that $H_{-\psi(s,u)}\in \mathcal D(\mathcal A)$ for any $s\geq 0$, and as the semigroup action commutes with the action of the infinitesimal generator (\cite[Proposition 1.5]{ethier2009markov}), we have
\begin{align}\label{eq: commutator}
&-F(\psi(s,u)-\langle R(\psi(s,u),x\rangle e^{\phi(s,u)}H_{\psi(s,u)}x=\mathcal A\left(\mathbb E^x[H_{-u}(X_s)]\right)=\mathbb E^x[\mathcal A H_{-u}(X_s)]\\\nonumber&\qquad=\mathbb E^x[(-F(u)-\langle R(u),X_s\rangle)H_{-u}(X_s)].
\end{align}
Using the fundamental theorem of calculus, the generalized Riccati differential equations \eqref{eq:Riccati}, identity \eqref{eq: commutator} and Fubini's theorem, we thus have for any $u\in (K^*)^\circ$,
\begin{align*}
e^{-\phi(t,u)-\langle \psi(t,u), x\rangle}-H_{-u}(x)&=\int_0^t (-\partial_s\phi(s,u)-\langle \partial_s\psi(s,u),x\rangle)e^{-\phi(s,u)-\langle \psi(s,u),x\rangle}ds\\\quad&=-\int_0^t (F(\psi(s,u)+\langle R(\psi(s,u),x\rangle)e^{-\phi(s,u)-\langle \psi(s,u),x\rangle}ds\\\qquad&=-\mathbb E^x\left[\int^t (F(u)+\langle R(u),X_s\rangle)H_{-u}(X_s)ds\right].
\end{align*}
which is \eqref{eq: super toyota}. This finishes the proof of $H_\eta$ being in the domain of the extended generator.
%
%

Since $-R(-\eta)\in-(K^*)^\circ$, there exists $\lambda_0>0$ such that $\langle R(-\eta),x\rangle\geq \lambda_0\|x\|$ (because, the continuous map $x\mapsto \langle R(-\eta),x\rangle$ attains its minimum
over the set $\{x\in K\mid \|x\|=1\}$). Hence,
\[
\mathcal A H_\eta (x)\leq H_\eta(x) (-F(-\eta)-\lambda_0 \|x\|).
\]
Select $k>0$ such that $-F(-\eta)-\lambda_0 k=-c<0$. Then for any $x\in K\setminus B$, where $B:=\{x\in K\mid \|x\|\leq k\}$
we have
\[
\mathcal A H_\eta(x)\leq -c H_\eta(x),
\]
and since $\mathcal AH_\eta$ is bounded on the compact set $B$, there exists a constant $d>0$ such that
\[
\mathcal A H_\eta(x)\leq -c H_\eta(x)+d 1_{K}(x),
\]
that is, the Foster-Lyapunov drift condition is satisfied for $H_\eta$, because $B$ is a compact set in $K$ and therefore by Proposition \ref{prop:compactpetite} petite.

We continue the proof by applying Theorem \ref{dmtergodicity}: By assumption $X$ is irreducible and aperiodic and we have shown that the Foster-Lyapunov drift condition holds. Further we know that $X$ is a Feller process, see Theorem \ref{th: main theorem}, hence it is a Borel right process, see \cite{marcusrosen}. Since $X$ is conservative by assumption it is non-explosive. Thus all assumptions of Theorem \ref{dmtergodicity} are fulfilled and thus $X$ is $H_\eta$-uniformly ergodic (whence, in particular geometrically ergodic).

Next we show that $X$ is positive Harris recurrent and that the stationary distribution has the demanded moments. For discrete time Markov chains in \cite[Theorem 3.12]{fuchs} it is shown that the Foster-Lyapunov drift condition for $(\mathcal A, H)$ implies besides geometric ergodicity also positive Harris recurrence and for the stationary distribution $\pi(H) < \infty$. Applying \cite[Theorem 5.1]{DMT1995} we get that the proven Foster-Lyapunov drift condition implies the discrete version for every skeleton chain of $X$. By the definition of positive Harris recurrence it holds also for the process $X$ and with the chosen test functions $H_\eta$ in the proof above the $\eta$-th exponential moment of the stationary distribution $\pi$ exists. 

It remains to prove \eqref{eq: limiting ATF} (we have shown above that it holds for $v\in B_\delta(0)$, and since $\eta=h_0\eta_0\in B_\delta(0)$,
it holds for $\eta$, in particular). Since $X$ is $H_\eta$-uniformly ergodic, we know that for any continuous function $f$ satisfying $\vert f\vert\leq H_\eta$, $\mathbb E^x[f(X_t)]\rightarrow \int f(\xi)\pi(d\xi)$, as $t\rightarrow \infty$, where $\pi$ is the unique stationary limit
of $X$. Since for any $v\preceq h_0\eta_0=\eta$ we have $f_v(x)=\exp(\langle v,x\rangle)\leq H_\eta(x)$, it follows that for any $v\preceq h_0\eta_0$
\[
e^{p(t,v)+\langle x,q(t,v)\rangle}=\mathbb E^x[e^{\langle v, X_t\rangle }]\rightarrow \int_K e^{\langle v,\xi\rangle}\pi(d\xi),\quad\text{as}\quad t\rightarrow\infty.
\]
Since the right hand side does not depend on the inital state $x$, we see $e^{\langle x,q(t,v)\rangle}\rightarrow 1$, i.e. $\langle x,q(t,v)\rangle\rightarrow 0 $ as $t\rightarrow \infty$,
for any $x\in K$. Since $K$ is generating ($K-K=V$), and the inner product is continuous, we obtain $\lim_{t\rightarrow \infty}q(t,v)=0$. Thus $e^{p(t,v)}=e^{\int_0^t- F(-q(s,v))ds}$ converges, as $s\rightarrow \infty$. Since the Laplace transform of the measure $\pi$ must be strictly positive, the limit is non-zero, and thus $\lim_{t\rightarrow \infty}e^{p(t,v)}=e^{\lim _{t\rightarrow \infty}p(t,v)}=e^{\int_0^\infty- F(-q(t,v))dt}$.
\end{proof}
The assumption of finite exponential moments for all times in Theorem \ref{prop:FLdrift_expo}
is implied by the existence of solutions to the generalized Riccati equations for all times, i.e. no finite blow-up
(\cite[Theorem 2.14 (b)]{KRM2015}). Therefore we can restate the Theorem as follows:
\begin{corollary}\label{cor:expmom}
Let $X$ be a an affine process on $K$ with $c=0$, $\gamma=0$. Assume that
\begin{enumerate}
\item \label{thm11cor} $X$ is $\nu$-irreducible with the support of $\nu$ having non-empty interior and aperiodic,
\item \label{thm12cor} There exists $\eta_0\in \mathbb (K^*)^\circ$ such that
\begin{equation}\label{eq:conditionbeta2xx}
B^\top \eta_0 + \int_{K\setminus\{0\}} \langle\eta_0,\xi-\chi(\xi)\rangle \mu(d\xi) \in - (K^*)^\circ
\end{equation}
and assume that 
\begin{align} \label{eq:ass_exponentialm1y}
&\int_{\{ \|x\|\ge 1\}} e^{ \langle\eta_0, \xi\rangle} (m(d\xi)+\langle\mu(d\xi),x\rangle) < \infty
\end{align} 
for some $x\in K^\circ$.
\end{enumerate} 
Then $X$ is conservative, \eqref{eq:ass_exponentialm1y} holds for any $\eta\preceq \eta_0$, and all $x\in K$ and $X$ is geometrically ergodic and positive Harris recurrent. Furthermore, the stationary distribution $\pi$ has finite exponential moments which can be computed
with the affine transform formula, i.e. there exists
$h_0>0$ such that for all $v\preceq h_0\eta_0$
\[
\int_{K\setminus \{0\}} e^{\langle v,\xi\rangle} \pi(d\xi) =e^{-\int_0^\infty F(\psi(s,-v))ds}.
\]
\end{corollary}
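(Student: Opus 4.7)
The strategy is to reduce the corollary to Theorem \ref{prop:FLdrift_expo} by deriving its missing hypothesis, namely that $\mathbb{E}^x[e^{\langle \eta_0, X_t\rangle}] < \infty$ for all $t \geq 0$ and some $x \in K^\circ$. Once this finite exponential moment is established, every other claim of the corollary (conservativeness, geometric ergodicity, positive Harris recurrence, finite moments of $\pi$, and the affine transform formula \eqref{eq: limiting ATF}) is already contained in Theorem \ref{prop:FLdrift_expo}; in particular the extension of \eqref{eq:ass_exponentialm1y} to all $\eta \preceq \eta_0$ and all $x\in K$ follows by the monotonicity argument given there.

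First, I would observe that the integrability assumption \eqref{eq:ass_exponentialm1y} together with the parametric restrictions of Theorem \ref{th: main theorem} guarantees that $u \mapsto -R(-u)$ is real-analytic on the open convex set
\[
\mathcal U=\Bigl\{u\in V: \int_{\{\|\xi\|\ge 1\}} e^{\langle u,\xi\rangle}\bigl(m(d\xi)+\langle\mu(d\xi),x_0\rangle\bigr)<\infty\Bigr\},
\]
that $\eta_0\in\mathcal U^\circ$, and that $u\mapsto -R(-u)$ is quasi-monotone increasing with respect to $K^*$ on $\mathcal U$, exactly as in the proof of Theorem \ref{prop:FLdrift_expo}. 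Next, from \eqref{eq:conditionbeta2xx} and Theorem \ref{thm: linag} applied to $\partial_u R(0)$, I would deduce $\tau(\partial_u R(0))<0$, so that $0$ is a hyperbolic sink for the generalized Riccati ODE $\partial_t q=-R(-q)$.

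The heart of the proof is to show that the unique local solution $q(\cdot,\eta_0)$ of $\partial_t q = -R(-q)$ with $q(0)=\eta_0$ exists globally and remains in a compact subset of $\mathcal U$. The plan is to combine Lyapunov stability of $0$ (which provides a forward-invariant neighborhood $B_\delta(0)\subset \mathcal U^\circ$ from which trajectories decay exponentially to $0$) with quasi-monotonicity, which both confines the trajectory to $K^*$ and permits comparison with dominating solutions that contract toward $0$; a suitable forward-invariant compact region containing $\{q(t,\eta_0):t\ge 0\}\cup\{0\}$ can then be constructed inside $\mathcal U$. This construction is the main obstacle: without it one cannot rule out the trajectory escaping $\mathcal U$ in finite time, in which case the Riccati representation would break down.

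Having established global existence of the Riccati solution starting from $\eta_0$, \cite[Theorem 2.14 (b)]{KRM2015} then yields $\mathbb{E}^x[e^{\langle \eta_0, X_t\rangle}]<\infty$ for all $t\ge 0$ and all $x\in K$ (in particular for some $x\in K^\circ$), exactly as the authors anticipate in the paragraph preceding the corollary. At this point all hypotheses of Theorem \ref{prop:FLdrift_expo} are verified, so invoking it directly delivers the full conclusion of the corollary.
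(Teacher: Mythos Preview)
Your overall strategy---verify the missing exponential-moment hypothesis of Theorem \ref{prop:FLdrift_expo} via global existence of the Riccati solution and \cite[Theorem 2.14 (b)]{KRM2015}---is the same as the paper's. But you make the argument substantially harder than necessary, and in doing so introduce a gap you yourself flag as ``the main obstacle.''

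The point you are missing is that Theorem \ref{prop:FLdrift_expo} does not require the exponential moment to hold at the \emph{same} $\eta_0$ that appears in the drift condition \eqref{eq:conditionbeta2x}. Since the left-hand side of \eqref{eq:conditionbeta2xx} is linear in $\eta_0$, the drift condition is satisfied by $h\eta_0$ for \emph{every} $h>0$. Hence it suffices to find \emph{some} small $h>0$ for which $\mathbb{E}^x[e^{\langle h\eta_0,X_t\rangle}]<\infty$. For that, you only need the Riccati ODE $\partial_t q=-R(-q)$ to have a global solution for \emph{small} initial data, and this is immediate from the asymptotic stability of $0$ you already established (Lyapunov/Verhulst, via $\tau(\partial_u R(0))<0$). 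The paper simply observes that $R$ is well-defined and locally Lipschitz in a neighbourhood of $0$ (because \eqref{eq:ass_exponentialm1y} propagates downward by monotonicity), invokes the local stability to get global solutions for small initial data, applies \cite[Theorem 2.14 (b)]{KRM2015} to obtain $\mathbb{E}^x[e^{\langle\eta,X_t\rangle}]<\infty$ for $\eta$ small, and then feeds this small $\eta$ into Theorem \ref{prop:FLdrift_expo}.

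By contrast, your plan to prove that $q(\cdot,\eta_0)$ itself is global requires showing that $\eta_0$ lies in the basin of attraction of $0$ \emph{and} that the trajectory never leaves $\mathcal U$. Neither follows from the hypotheses: there is no reason the basin of attraction should contain all of $\mathcal U$, and the qmi/comparison argument you sketch does not by itself produce a forward-invariant compact set containing $\eta_0$. This ``main obstacle'' is not merely technical---it may be genuinely false for large $\eta_0$---and the paper avoids it entirely by scaling down.
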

\begin{proof}
By monotonicity of the exponential, of course \eqref{eq:ass_exponentialm1y} also holds for any $\eta\preceq \eta_0$. Therefore it holds in a neighborhood of the origin. Hence the generalized Riccati differential equations 
are well-posed for initial data near zero. Furthermore, by \eqref{eq:conditionbeta2xx} and Theorem \ref{thm: linag}, the Frechet differential of $R(u)$ at zero has negative spectral bound $\tau$. Therefore for small enough initial data, the generalized Riccati differential equations have global solutions which tend to zero as $t\rightarrow\infty$.
According to \cite[Theorem 2.14 (b)]{KRM2015}, for any $x\in K$, $\mathbb E^x[e^{\langle\eta,X_t\rangle}]<\infty$ for initial data $\eta$ small enough. Thus the assumptions of Theorem \ref{prop:FLdrift_expo} are satisfied.
\end{proof}
\begin{remark}
\begin{enumerate}
\item It is easy to see that condition \eqref{eq:conditionbeta2x} is independent of the choice of the truncation function. The appearance of the truncation function implies that we can indeed cover cases with not only infinite activity, but even infinite variation state dependent jumps. 
\item The drift condition \eqref{eq:conditionbeta2x} involves the linear drift and the state dependent jumps. 
\item The state-independent jumps do not matter at all, \eqref{eq:conditionbeta2x} and \eqref{eq:conditionbeta2xx} only imply that they need to have finite exponential moments. In the case of an affine diffusion the Assumption (ii) of Corollary \ref{cor:expmom} boils down to the existence of an $\eta_0\in(K^*)^\circ$ such that $B^\top\eta_0\in-(K^*)^\circ$. Note that in the diffusive case the above reasoning based on \cite{KRM2015} implies that we automatically have some exponential moments.
\item For compound Poisson jumps and the cone $\R_+^m $ Condition \eqref{eq:conditionbeta2x} is identical to the drift condition of \cite{ZhangGlynn2018} which shows that it is close to being necessary as well. The latter can also be seen from the special case of Ornstein-Uhlenbeck type processes (see e.g. \cite{masuda2004,satoyamazato1984}). 
\end{enumerate}
\end{remark}
\subsection{Finite moments}\label{sec: pol}
In this section, polynomial functions are used to obtain the Foster-Lyapunov drift condition for
the extended generator of $X$. This requires to know the precise action of the generator on polynomials.
To this end we extend Proposition \ref{prop action exp} to a larger function space.

For the following paragraph we borrow some introductory material and notation from \cite[Section 4.3]{cuchiero2016affine}\footnote{While the quoted material
is stated within the framework of symmetric cones (Section 4), we only borrow the part that holds for general cones.}. 
We shall consider the tensor product $V \otimes V^{\ast}$, which we identify via the canonical isomorphism
\[
(u \otimes v)x=\langle x,v\rangle u, \quad x \in V,
\]
with the vector space of linear maps on $V$ denoted by $\mathcal{L}(V)$.
Moreover, for an element $A \in \mathcal{L}(V)$, we denote its trace by $\text{Tr}(A)$.
Observe that $\text{Tr}(A (u\otimes u))=\langle u, A u\rangle$. Indeed, by choosing an orthonormal basis $\{e_{\beta}\}$ of $V$, we have
\[
\text{Tr}(A (u\otimes u))=\sum_{\beta} \langle A^{\top}e_{\beta}, (u \otimes u)e_{\beta}\rangle
=\sum_{\beta} \langle A^{\top}e_{\beta}, \langle u, e_{\beta}\rangle u\rangle
=\sum_{\beta} \langle u, e_{\beta}\rangle \langle A^{\top}e_{\beta}, u\rangle
=\langle u, A u\rangle.
\]
Let $A: K\rightarrow \mathcal{L}(V)$ be the linear diffusion coefficient, identified by
\begin{align}\label{eq:AQ}
\langle u, A(x) v\rangle=\langle x, Q(u,v) \rangle, \quad u,v\in V,
\end{align}
where $Q: V \times V \to V$ is the symmetric bilinear function in Definition \ref{th: main theorem} \ref{eq:lin_diffusion1}.
Next, we introduce the following linear operator acting on $C^2_b(K)$-functions\footnote{We mean elements of $f\in C^2_b(V)$, restricted to $K$.}
\begin{equation}
\begin{split}\label{eq: generator}
\mathcal{A^\sharp}f(x)&=\frac{1}{2}\text{Tr}\left(A(x) \left(\frac{\partial}{\partial x}\otimes \frac{\partial}{\partial x}\right)\right)f(x)+\left\langle b+B(x), \nabla f(x)\right\rangle
-(c+\langle\gamma,x\rangle)f(x)\\&+\int_{K}
\left(f(x+\xi)-f(x)\right)m(d\xi)+\int_{K}\left(f(x+\xi)-f(x)-\langle
\chi(\xi), \nabla f(x)\rangle\right)\langle x, \mu(d\xi)\rangle.
\end{split}
\end{equation}
A straightforward check reveals that for any $u\in K^*$, the functions $f_u(x)=\exp(-\langle u,x\rangle)$ satisfy
\begin{equation}\label{eq: identif}
\mathcal A^\sharp f_u(x)=f_u(x)(-F(u)-\langle x,R(u)\rangle)=\mathcal A f_u(x),
\end{equation}
where the last identity holds by virtue of Proposition \ref{prop action exp}. Let now $\mathcal S_+$ be the restriction of rapidly decreasing $C^\infty$ functions on $V$ to $K$. Using \eqref{eq: identif} and similar arguments as in the proof of \cite[Proposition 5.9]{cuchiero}\footnote{The argument hinges on
a density argument (see \cite[Proof of Theorem B.3 ]{cuchiero}), namely that the linear hull of $f_u(x)$, $u\in (K^*)^\circ$, is dense in the space of rapidly decreasing functions $\mathcal S_+$ on $K=S_d^+$,
the cone of $d\times d$ positive semi-definite matrices. An inspection of this density argument reveals that it applies to the general cones $K$ of this paper.},
\begin{proposition}
$\mathcal S$ is part of the domain of the infinitesimal generator $\mathcal A)$ and for any $f\in \mathcal S_+$, we have $\mathcal Af(x)=\mathcal A^\sharp f(x)$.
\end{proposition}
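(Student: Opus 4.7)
The plan is to deduce the proposition from Proposition \ref{prop action exp} by a density argument in the spirit of \cite[Proposition 5.9 and Theorem B.3]{cuchiero}, combined with the closedness of the Feller generator: since $X$ is Feller (Theorem \ref{th: main theorem}), the domain $\mathcal D(\mathcal A)$ is closed in $C_0(K)$ with respect to the graph norm, so any sequence $f_n \in \mathcal D(\mathcal A)$ with $f_n \to f$ and $\mathcal A f_n \to g$ uniformly satisfies $f \in \mathcal D(\mathcal A)$ with $\mathcal A f = g$.

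First, I would verify that $\mathcal A^\sharp f \in C_b(K)$ for every $f \in \mathcal S_+$. The drift, diffusion, and killing terms are bounded and continuous because $f$ and its derivatives up to order two are bounded. For the state-independent jump integral, the estimate $|f(x+\xi)-f(x)| \leq \|\nabla f\|_\infty (1 \wedge \|\xi\|) + 2\|f\|_\infty \mathbf{1}_{\{\|\xi\|>1\}}$ together with Theorem \ref{th: main theorem}\ref{eq:const_measure1} gives absolute integrability. A second-order Taylor expansion bounds the integrand of the $\mu$-integral by a constant times $(1 \wedge \|\xi\|^2)$ plus $|\langle\chi(\xi),\nabla f\rangle|\mathbf{1}_{\{\|\xi\|>1\}}$, which is integrable against $\langle x,\mu(d\xi)\rangle$ by Theorem \ref{th: main theorem}\ref{eq:lin_measure1}, and continuity in $x$ follows from dominated convergence.

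Next, identity \eqref{eq: identif} combined with Proposition \ref{prop action exp} yields $f_u \in \mathcal D(\mathcal A)$ and $\mathcal A f_u = \mathcal A^\sharp f_u$ for every $u \in (K^*)^\circ$. Hence the linear span $\mathcal E := \operatorname{span}\{f_u : u \in (K^*)^\circ\}$ lies in $\mathcal D(\mathcal A)$ with $\mathcal A = \mathcal A^\sharp$ on $\mathcal E$. The key step is then to approximate an arbitrary $f \in \mathcal S_+$ by a sequence $f_n \in \mathcal E$ in a topology strong enough to carry both $f_n \to f$ and $\mathcal A^\sharp f_n \to \mathcal A^\sharp f$ uniformly on $K$. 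The density argument from \cite[proof of Theorem B.3]{cuchiero} relies only on $(K^*)^\circ \neq \emptyset$ (ensured here by Lemma \ref{lem: nonempty}), on $K^* - K^* = V$, and on standard Paley–Wiener/Laplace transform reasoning for rapidly decreasing functions; none of these use the symmetric-cone structure, so the argument transfers verbatim, delivering a sequence $f_n \in \mathcal E$ that converges to $f$ together with all derivatives, uniformly on $K$ and with rapid decay at infinity uniform in $n$.

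The main obstacle is to ensure that this approximation yields $\mathcal A^\sharp f_n \to \mathcal A^\sharp f$ in supremum norm. Uniform convergence of $f_n$, $\nabla f_n$, $\nabla^2 f_n$ handles the drift, diffusion, and killing terms directly. For the jump integrals one needs an equi-integrability argument: the bounds on $|f_n(x+\xi)-f_n(x)|$ and the truncated second-order expression must be uniformly controlled by integrable majorants built from $(1\wedge\|\xi\|)$ against $m$ and $(1\wedge\|\xi\|^2)$ together with $|\chi(\xi)|\mathbf{1}_{\{\|\xi\|>1\}}$ against $\sup_{x \in K_0}\langle x,\mu(d\xi)\rangle$, where the supremum is over the bounded set where convergence is needed. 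Since $K$ is not compact, the supremum norm convergence of the jump terms further uses the rapid decay of the Schwartz functions to handle large $\|x\|$. Once uniform convergence of $\mathcal A^\sharp f_n \to \mathcal A^\sharp f$ is established, the closedness of $\mathcal A$ on $C_0(K)$ transfers the identity $\mathcal A g = \mathcal A^\sharp g$ from $g \in \mathcal E$ to $g = f \in \mathcal S_+$, giving both $\mathcal S_+ \subseteq \mathcal D(\mathcal A)$ and the claimed formula.
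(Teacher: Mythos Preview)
Your proposal is correct and follows essentially the same approach as the paper. The paper's own proof is in fact only a pointer to \cite[Proposition 5.9]{cuchiero} together with the footnoted remark that the underlying density argument from \cite[Proof of Theorem B.3]{cuchiero} (density of $\operatorname{span}\{f_u:u\in(K^*)^\circ\}$ in $\mathcal S_+$) carries over from the symmetric-cone setting to the general cones of this paper; your write-up is a faithful and more explicit expansion of exactly that strategy, including the use of \eqref{eq: identif}, Proposition~\ref{prop action exp}, and closedness of the Feller generator.
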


For the remainder of this section, we identify $V=\mathbb R^n$ to be able to introduce the spaces $\mathcal P_k$ of polynomials in the variables $x_1,\dots, x_n$, with real coefficients of order $\leq k$\footnote{The orders of polynomials do not depend on the choice of basis.}.
\begin{theorem}\label{thm: cons and pol}
Let $m\geq 2$. Suppose $c=0$, $\gamma=0$, and for some $x_0\in K^\circ$,
\begin{equation}\label{eq: moment condsuper}
\int_{K\setminus\{0\}}\|\xi\|^m (m(d\xi)+\langle x_0,\mu(d\xi)\rangle<\infty.
\end{equation}
Then $X$ is a conservative process with infinitesimal generator $\mathcal A$
acting on $\mathcal S_+$ as
\begin{align}\label{eq: generatorx}
\mathcal{A^\sharp}f(x)&=\frac{1}{2}\text{Tr}\left(A(x) \left(\frac{\partial}{\partial x}\otimes \frac{\partial}{\partial x}\right)\right)f(x)\\\nonumber&\qquad+\left\langle b+\widetilde B(x), \nabla f(x)\right\rangle
+\int_{K}
\left(f(x+\xi)-f(x)-\langle \nabla f(x),\xi\rangle\right)(m(d\xi)+\langle x, \mu(d\xi)),
\end{align}
where $\widetilde B x= B x+\int_{K}
(\xi-\chi(\xi))\langle x, \mu(d\xi)\rangle$
Furthermore, $X$ is an $m$-polynomial process, that is, for any $p\in\mathcal P_m$, any $x\in K$ and for all $t\geq 0$
we have
\[
\mathbb E^x[p(X_t)]<\infty
\]
and the extended generator of $X$ reduces to a (finite dimensional) vector space endomorphism 
$\mathcal A: \mathcal P_m\rightarrow\mathcal P_m$ given by (the pointwise limit)
\[
\lim_{t\downarrow 0}\frac{\mathbb E^x[p(X_t)]-p(x)}{t}=\mathcal A^\sharp p(x),
\]
where $\mathcal A^\sharp$ is defined in \eqref{eq: generatorx}.
\end{theorem}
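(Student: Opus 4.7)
The plan is to argue in four stages: conservativeness, the algebraic identification of the generator with the form \eqref{eq: generatorx}, verification that $\mathcal A^\sharp$ preserves $\mathcal P_m$, and finally the moment bound plus extended-generator statement.

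First, I would establish conservativeness by invoking Proposition \ref{prop cons}: since $m\ge 1$ one has $\|\xi\|\le\|\xi\|^m$ on $\{\|\xi\|\ge 1\}$, so the hypothesis \eqref{eq: moment condsuper} immediately implies condition \eqref{cons 3} and, together with $c=0$, $\gamma=0$, gives non-explosion. The identification of $\mathcal A^\sharp$ with \eqref{eq: generatorx} on $\mathcal S_+$ is then a purely algebraic rearrangement: since $m\ge 2$ makes $\int\|\xi\|\,m(d\xi)$ and $\int\|\xi\|\,\langle x,\mu(d\xi)\rangle$ finite for every $x\in K$ (dominate $x$ by a scalar multiple of $x_0\in K^\circ$), one may add and subtract $\langle\nabla f(x),\xi\rangle$ inside each jump integral of \eqref{eq: generator} and regroup the resulting linear correction into the drift --- the state-independent piece $\int\xi\,m(d\xi)$ into $b$ and the state-dependent piece $\int(\xi-\chi(\xi))\langle x,\mu(d\xi)\rangle$ into $\widetilde Bx$.

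For the polynomial preservation, I would take $p\in\mathcal P_k$ with $k\le m$ and use Taylor's formula to write
\[
p(x+\xi)-p(x)-\langle\nabla p(x),\xi\rangle=\sum_{2\le|\beta|\le k}\tfrac{1}{\beta!}\partial^\beta p(x)\,\xi^\beta,
\]
which is a polynomial of $x$-degree $\le k-|\beta|$ and $\xi$-degree $|\beta|$ with $2\le|\beta|\le k$. The moment hypothesis \eqref{eq: moment condsuper} makes $\int\xi^\beta\,m(d\xi)$ and $\int\xi^\beta\,\mu(d\xi)$ finite for every such $\beta$, so integration against $m(d\xi)+\langle x,\mu(d\xi)\rangle$ returns a polynomial in $x$ of degree $\le k\le m$ (the factor $\langle x,\mu(d\xi)\rangle$ can raise the $x$-degree by at most one). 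Combined with the diffusion (linear in $x$) and drift terms, both manifestly polynomial-preserving of degree $\le k$, this gives $\mathcal A^\sharp:\mathcal P_m\to\mathcal P_m$.

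The decisive step is the last one, where I would bootstrap Dynkin's formula from $\mathcal S_+$ (where it is provided by the preceding proposition) to $\mathcal P_m$ by localization. Set $\tau_R:=\inf\{t\ge 0:\|X_t\|\ge R\}$. Applied to smooth compactly supported approximants $q_n\in\mathcal S_+$ of $q(x):=1+\|x\|^m$ chosen so that $q_n\uparrow q$ and $|\mathcal A^\sharp q_n|\le C(1+\|x\|^m)$ uniformly on $\{\|x\|\le R\}$, the stopped Dynkin identity followed by monotone/dominated convergence and Gronwall yields $\mathbb E^x[1+\|X_{t\wedge\tau_R}\|^m]\le(1+\|x\|^m)e^{Ct}$; letting $R\to\infty$ (so $\tau_R\to\infty$ $\mathbb P^x$-a.s.\ by conservativeness) and invoking Fatou gives the moment bound $\mathbb E^x[\|X_t\|^m]<\infty$. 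Once this is in hand, the polynomial estimate $|\mathcal A^\sharp p|\le C(1+\|x\|^m)$ for any $p\in\mathcal P_m$ makes condition \eqref{exgen2} automatic, and the localized Dynkin identity for $p$ survives the limit $R\to\infty$, yielding \eqref{exgen1} with $g=\mathcal A^\sharp p$; the pointwise limit formula then comes from differentiating the resulting integral identity at $t=0^+$. I expect the hard part to be the construction of the Schwartz approximants $q_n$ in such a way that the state-dependent jump contribution to $\mathcal A^\sharp q_n$ remains uniformly dominated by the polynomial growth rate $1+\|x\|^m$; this is the analogue, in the cone setting, of the approximation step underlying \cite[Theorem 2.7]{cuchiero2012polynomial}.
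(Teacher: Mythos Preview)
Your first two stages---conservativeness via Proposition~\ref{prop cons} and the algebraic rewriting of the generator---coincide with the paper's proof. (You even catch that the constant $\int\xi\,m(d\xi)$ should be absorbed into $b$, a point the paper's statement leaves implicit.)

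The substantive difference is in the polynomial-process part. The paper does not carry out any of your stages three and four: it simply observes that the semimartingale realization of $X$ places it within the scope of \cite[Theorem~2.15]{cuchiero2012polynomial}, and imports the moment bound, the $\mathcal P_m$-invariance, and the extended-generator identification wholesale from that reference. Your proposal instead reconstructs that machinery by hand---Taylor expansion for polynomial preservation, Schwartz approximants plus stopped Dynkin plus Gronwall for the moment bound, then dominated convergence for the extended-generator identity. This is essentially the content of the argument in \cite{cuchiero2012polynomial} that the paper cites, so your route is correct but considerably longer; its advantage is self-containment, while the paper's advantage is brevity at the cost of checking that the cited theorem (stated for semimartingales on $\mathbb R^n$) applies to the cone-valued process, which it does once one takes the c\`adl\`ag realization. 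The approximation step you flag as ``hard'' is indeed the delicate point of that cited proof, but since the paper treats it as settled by the reference, you need not redo it here.
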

\begin{proof}
Since $c=0$ and $\gamma=0$ and by \eqref{eq: moment condsuper}, $X$ is conservative due to Proposition \ref{prop cons}. Furthermore, Condition \eqref{eq: moment condsuper} implies $\int_{\|\xi\|\geq 1}
\|\xi\|\langle x, \mu(d\xi)\rangle<\infty$. Therefore the modified drift $\widetilde B =\widetilde B -\int_{K}
(\xi-\chi(\xi))\langle x, \mu(d\xi)\rangle$ is well-defined (since finite), and the special form of the generator \eqref{eq: generatorx} holds.

The second part can be proved by using \cite[Theorem 2.15]{cuchiero2012polynomial} and the realization of $X$ as
a $K$--valued c\`adl\`ag semimartingale (using, e.g., the canonical realization).
\end{proof}
We are prepared to state and prove the following ergodicity result:
\begin{theorem}
\label{prop:fldrift_trace}
Let $X$ be a an affine process on $K$ with $c=0$, $\gamma=0$ and let $m\geq 2$. Assume that
\begin{enumerate}
\item \label{irraperth2} $X$ is $\nu$-irreducible with the support of $\nu$ having non-empty interior and aperiodic.
\item \label{as3th2} There exists $\eta_0\in \mathbb (K^*)^\circ$ such that
\begin{equation}
B^\top \eta_0 + \int_{K\setminus\{0\}} \langle\eta_0,\xi-\chi(\xi)\rangle \mu(d\xi) \in - (K^*)^\circ.
\end{equation}
\item The jump measures have finite $m$--th moment, that is, there exists $x_0\in K^\circ$ such that
\begin{equation}\label{eq:ass_trace}
\int_{K}\|\xi\|^m (m(d\xi)+\langle x_0,\mu(d\xi)\rangle)<\infty.
\end{equation}
\end{enumerate}
Then $X$ is a conservative process. Furthermore, it is geometrically ergodic, positive Harris recurrent and the stationary distribution $\pi$ has finite $m$--th moment, that is,
$\int_{K} ( \|\xi\|^m+1)\pi(d\xi) < \infty$.
\end{theorem}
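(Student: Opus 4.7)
The plan is to apply the Foster--Lyapunov criterion of Theorem \ref{dmtergodicity} with the polynomial Lyapunov function $V(x) := (1+\langle \eta_0, x\rangle)^m$, where $\eta_0 \in (K^*)^\circ$ is supplied by assumption \ref{as3th2}. Since $\eta_0$ lies in the interior of the dual cone, $V \ge 1$ on $K$ by Lemma \ref{lem: nonempty}; being a polynomial of degree $m$, Theorem \ref{thm: cons and pol} places $V \in \mathcal{D}(\mathcal{A})$ and gives $\mathcal{A}V = \mathcal{A}^\sharp V$ from \eqref{eq: generatorx}. Conservativeness of $X$ is immediate from Proposition \ref{prop cons}, since for $m \ge 2$ the moment bound \eqref{eq:ass_trace} implies condition \eqref{cons 3}.

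The heart of the argument is to verify the drift inequality $\mathcal{A}^\sharp V(x) \le -a V(x) + d\, 1_C(x)$ for some constants $a, d > 0$ and a closed ball $C \subset K$. Differentiating, $\nabla V(x) = m(1+\langle \eta_0, x\rangle)^{m-1}\eta_0$, and the Hessian equals $m(m-1)(1+\langle\eta_0,x\rangle)^{m-2}\,\eta_0\otimes\eta_0$. The dominant (negative) contribution comes from the linear drift: setting $\beta := \widetilde B^\top \eta_0 = B^\top \eta_0 + \int_K \langle \eta_0, \xi-\chi(\xi)\rangle\mu(d\xi)$, hypothesis \ref{as3th2} gives $\beta \in -(K^*)^\circ$, so by compactness of $\{x \in K : \|x\| = 1\}$ there is $\lambda_0 > 0$ with $\langle \beta, x\rangle \le -\lambda_0 \|x\|$ for all $x \in K$. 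Hence $\langle \widetilde B(x), \nabla V(x)\rangle \le -m\lambda_0 (1+\langle\eta_0,x\rangle)^{m-1}\|x\|$, a term of order $-\|x\|^m$ as $\|x\| \to \infty$.

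All remaining terms of $\mathcal{A}^\sharp V(x)$ grow at most like $\|x\|^{m-1}$ and are thus strictly subdominant. The diffusion contribution, using $\langle u, A(x) u\rangle = \langle x, Q(u,u)\rangle$, equals $\tfrac12 m(m-1)(1+\langle\eta_0,x\rangle)^{m-2}\langle x, Q(\eta_0,\eta_0)\rangle$; the constant-drift term contributes $m(1+\langle\eta_0,x\rangle)^{m-1}\langle b, \eta_0\rangle$; and via the binomial expansion
\[
V(x+\xi) - V(x) - \langle \nabla V(x), \xi\rangle = \sum_{k=2}^m \binom{m}{k}(1+\langle\eta_0,x\rangle)^{m-k}\langle\eta_0,\xi\rangle^k,
\]
the jump integral splits into a finite sum of terms of degree at most $m-k+1 \le m-1$ in $x$, all of them finite by \eqref{eq:ass_trace} (the bound extending from the reference point $x_0\in K^\circ$ to arbitrary $x \in K$ by the same domination argument used in the proof of Proposition \ref{prop cons}). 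Combining these bounds, there exists $K_0 > 0$ such that $\mathcal{A}^\sharp V(x) \le -m\lambda_0 (1+\langle\eta_0,x\rangle)^{m-1}\|x\| + K_0(1+\|x\|^{m-1})$, and for $\|x\|$ sufficiently large this is $\le -a V(x)$ for some $a > 0$. Taking $C$ to be a sufficiently large closed ball and $d := \sup_C(\mathcal{A}^\sharp V + a V)$ yields the drift inequality.

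The remaining conclusions follow exactly as in the final paragraphs of the proof of Theorem \ref{prop:FLdrift_expo}. Since $X$ is Feller (Theorem \ref{th: main theorem}) and $\nu$-irreducible with the support of $\nu$ having non-empty interior, Proposition \ref{prop:compactpetite} makes $C$ petite, and Theorem \ref{dmtergodicity} delivers $V$-uniform (hence geometric) ergodicity. Positive Harris recurrence and $\int_K(\|\xi\|^m + 1)\pi(d\xi) < \infty$, which is equivalent to $\int V\,d\pi < \infty$ since $\langle \eta_0, x\rangle \ge \lambda_{\eta_0}\|x\|$ on $K$, then follow via \cite[Theorem 3.12]{fuchs} and \cite[Theorem 5.1]{DMT1995}. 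The main obstacle is the careful book-keeping of polynomial degrees in the jump integral, ensuring that every non-drift contribution remains strictly below the $-\|x\|^m$ rate produced by the linear drift.
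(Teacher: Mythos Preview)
Your proof is correct and follows essentially the same strategy as the paper: establish conservativeness and the polynomial property via Theorem \ref{thm: cons and pol}, verify a Foster--Lyapunov drift condition for a degree-$m$ polynomial in $\langle\eta_0,x\rangle$, and conclude via Theorem \ref{dmtergodicity} and Proposition \ref{prop:compactpetite}. The only difference is cosmetic: the paper uses $H_{\eta_0}(x)=\langle\eta_0,x\rangle^m+1$ while you use $V(x)=(1+\langle\eta_0,x\rangle)^m$, and you spell out the binomial expansion of the jump term where the paper simply records the order $O(\|x\|^{m-1})$.
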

\begin{remark}
In the below proof, we rely on the theory of \cite{cuchiero2012polynomial} which requires $m\geq 2$ in \eqref{eq:ass_trace}. However, we conjecture that a similar result holds using the first moment.
\end{remark}
\begin{proof}[Proof of Theorem \ref{prop:fldrift_trace}]
First, by Theorem \ref{thm: cons and pol}, $X$ is conservative and $m$--polynomial, and its extended generator is of the form \eqref{eq: generatorx}.

For $\eta\preceq \eta_0$ define the functions $H_\eta: K\rightarrow [1,\infty)$, $H_\eta(x):=\langle \eta,x\rangle^m+1$, which by construction
have range in $[1,\infty)$. Let $D_vf(x)$ be the Fr\'echet differential of a function $f$ at a point $x$ in the direction $v$. We clearly have
\begin{align*}
D_u H_{\eta_0}(x)&=m\langle \eta_0,x\rangle^{m-1}\langle\eta_0,u\rangle,\\
D^2_{(u,v)}H_{\eta_0}(x)&=m(m-1)\langle \eta_0,x\rangle^{m-2}\langle \eta_0,u\rangle\langle \eta_0,v\rangle=O(\|x\|^{m-2}).
\end{align*}
Therefore, a straightforward computation yields
\[
\mathcal A H_{\eta_0}(x)=m\langle \eta_0,x\rangle^{m-1}\left(\langle x,B^\top\eta_0+\int_{K\setminus \{0\}}\langle \eta_0,\xi-\chi(\xi)\rangle\mu(d\xi)\right)+O(\|x\|^{m-1}).
\]
Moreover, by assumption \ref{as3th2} and the fact that $\eta_0\in (K^*)^\circ$, there exists $\lambda>0$ such that 
\[
\widetilde B^\top\eta_0=B^\top\eta_0+\int_{K\setminus \{0\}}\langle \eta_0,\xi-\chi(\xi)\rangle\mu(d\xi)\preceq -\lambda \eta_0.
\]
Hence, 
\[
\mathcal AH_{\eta_0}(x)\leq -\lambda m \langle x,\eta_0\rangle^m+O(\| x\|^{m-1}).
\]
Thus, there exists $d\in\mathbb R$ such that for sufficiently large $r$, we have for the compact ball $B=\{x\in K\mid \|x\|\leq r\} $,
\[
\mathcal A H_{\eta_0}(x)\leq -c H_{\eta_0}(x)+d \cdot 1_{K},
\]
for some $c<\lambda m$. By Assumption \ref{irraperth2} and Proposition \ref{prop:compactpetite}, every compact set is a petite set, whence $(\mathcal A, H)$ satisfies the Foster-Lyapunov drift condition. By Theorem \ref{dmtergodicity}
we may conclude that $X$ is $H$--uniformly ergodic.

Harris recurrence and the finiteness of $m$ moments is proved along the corresponding lines of the Proof of Theorem \ref{prop:FLdrift_expo}.
\end{proof}
\section{Aperiodicity and irreducibility}\label{sec: irr}
The main statements of the previous Section, Theorems \ref{prop:FLdrift_expo} and \ref{prop:fldrift_trace} assume away aperiodicity and irreducibility of the underlying affine process, since the focus was on establishing the needed Foster-Lyapunov drift condition. Establishing (Lebesgue-)irreducibility is typically linked to showing absolute continuity of transition probabilities and usually even to establishing strict positivity of the density. This is a research topic on its own. In this section, we show first that for a large class of diffusion processes, adding finite activity jumps
does not change aperiodicity and irreducibility. As for diffusions (see Remark \ref{rem:diffdens}), the existence of (strictly positive) transition densities is much better understood than for the general affine case with jumps, this immediately establishes Lebesgue-irreducibility and aperiodicity for a large set of affine processes (purely diffusive or with finite activity jumps) on irreducible symmetric cones. Finally , we discuss some finite activity pure jump cases where irreducibility and aperiodicity can be obtained.

For the entire section, we work with the c\`adl\`ag modification of the canonical realization of $X\equiv (X_t)_{t\geq 0}$, for each initial law $\mathbb P^x$. Moreover, we assume that $c=0$, $\gamma=0$, and the jump-behavior is of compound Poisson type, in the sense that the jump measures $m,\mu$ are finite and $\mu$ admits a first moment,
\begin{align}
&m(K)<\infty,\quad \mu(K)\in K^*,\\\label{eq: estimate loc Lip}
&\int_{\|\xi\|\geq 1}\|\xi\|\mu(d\xi)\in K^*.
\end{align}
This implies the following: 
\begin{itemize}
\item for any $x\in K$, $\int_{\|\xi\|\geq 1}\|\xi\|\langle x,\mu(d\xi)\rangle<\infty$,
and therefore $X$ is conservative. Note that $\int_{\|\xi\|\geq 1}\|\xi\|\langle x,\mu(d\xi)\rangle<\infty$ is also implied by the assumptions of Theorems \ref{prop:FLdrift_expo} and \ref{prop:fldrift_trace}.
\item The process $X$ has finite activity jumps, and therefore $\sum_{s\leq t}\Delta X_s:=\sum_{s\leq t} (X_s-X_{s-})$ is well-defined.
\item We may take $\chi\equiv 0$ as truncation function. 
\end{itemize}
By enlarging the state-space, we can disentangle the process from its discontinuous component in a tractable way:
\begin{lemma}\label{lem: affine technology}
For each $x\in K$, $y\in K$, the process $(X_t, Y_t)_{t\geq 0}$, where $Y_t:=y+\sum_{s\leq t}\Delta X_s$,
is an affine process on $K\times K$, with transition probability
\[
\mathbb P^{x,y}[X_t\in A,Y_t\in B]=\mathbb P^x[X_t\in A, y+\sum_{s\leq t}\Delta X_s\in B].
\] 
Its Laplace transform for any $u\in (K^*)^\circ$, $v\in K^*$ is given by
\begin{equation}\label{AFFET}
\mathbb E^{x,y}[e^{-\langle u, X_t\rangle-\langle v, Y_t\rangle}]=e^{-\phi(t,u,v)-\langle \psi(t,u,v),x\rangle-\langle y,v\rangle},
\end{equation}
where $\phi=\phi(t,u,v)$, $\psi=\psi(t,u,v)$ are the unique, global solutions of the generalized Riccati differential equations
\begin{align*}
\partial_t \phi&= \langle b, \psi \rangle - \int_{K \setminus\{0\}} \left(e^{-\langle \psi, \xi \rangle-\langle v,\xi\rangle }-1 \right) m(d\xi), \\
\partial_t \psi&= -\frac{1}{2} Q(\psi,\psi) + B^\top \psi- \int_{K \setminus\{0\}} \left( e^{-\langle \psi, \xi \rangle -\langle v,\xi\rangle}-1\right) \mu(d\xi),
\end{align*}
with initial conditions 
\[ 
\phi(0,(u,v))=0,\quad \psi(0,(u,v))=u\in (K^*)^\circ.
\]
\end{lemma}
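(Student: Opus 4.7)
My plan is to set the lemma up as a direct application of the generator/backward-Kolmogorov approach already developed in the paper and in \cite{cuchiero2016affine}. The compound-Poisson assumption is crucial: it guarantees that $\sum_{s\le t}\Delta X_s$ is an a.s.\ finite sum of $K$-valued jumps, so $Y_t\in y+K\subseteq K$ is well-defined, c\`adl\`ag and of pure-jump type. The Markov property of $(X_t,Y_t)_{t\ge 0}$ is immediate from that of $X$: given $(X_t,Y_t)$, the increment $Y_{t+s}-Y_t$ only aggregates the future jumps of $X$, whose law depends on the past only through $X_t$. This gives the stated transition kernel, and stochastic continuity of $(X,Y)$ follows from that of $X$ (with $Y$ having no fixed discontinuities since $X$ does not).

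Next I would identify the extended generator $\mathcal L$ of $(X,Y)$ on the test functions $f_{u,v}(x,y)=e^{-\langle u,x\rangle-\langle v,y\rangle}$ for $u\in(K^*)^\circ$, $v\in K^*$. The diffusive and drift parts act only in $x$ and reproduce the $x$-contributions of the generator of $X$ alone, since $Y$ has no continuous motion. For the jump part, the decisive observation is that $\Delta Y=\Delta X$, so a jump of size $\xi$ (under compensator $m(d\xi)+\langle x,\mu(d\xi)\rangle\,ds$) multiplies $f_{u,v}$ by $e^{-\langle u+v,\xi\rangle}-1$. Using $\langle u,A(x)u\rangle=\langle x,Q(u,u)\rangle$ from \eqref{eq:AQ}, one obtains
\[
\mathcal L f_{u,v}(x,y)=-f_{u,v}(x,y)\bigl(\widetilde F(u,v)+\langle x,\widetilde R(u,v)\rangle\bigr),
\]
where $\widetilde F(u,v)$ and $\widetilde R(u,v)$ coincide with the right-hand sides of the stated Riccati equations evaluated at $t=0$. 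Note that no $\langle y,\cdot\rangle$-term appears, reflecting the fact that $Y$ enters the dynamics only through the fixed parameter $v$; this is why $v$ does not evolve.

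With this in hand I would define $\phi(\cdot,u,v),\psi(\cdot,u,v)$ as the maximal solutions of the displayed ODEs with initial data $\phi(0)=0$, $\psi(0)=u\in(K^*)^\circ$ and verify \eqref{AFFET} by the ansatz
\[
g(t,x,y)=e^{-\phi(t,u,v)-\langle\psi(t,u,v),x\rangle-\langle y,v\rangle},
\]
which satisfies $\partial_t g=\mathcal L g$ by direct differentiation; uniqueness of the Kolmogorov backward equation (obtainable from the Feller property, which transfers to $(X,Y)$ because the jumps are finite-activity) then forces $\mathbb E^{x,y}[f_{u,v}(X_t,Y_t)]=g(t,x,y)$. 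For well-posedness of the Riccati system I would observe that $\widetilde R(\cdot,v)$ is of exactly the same admissible form as the $R$ of an affine process, only with the $K^*$-valued, finite jump measure $\mu$ replaced by the damped (still $K^*$-valued, still finite) measure $e^{-\langle v,\xi\rangle}\mu(d\xi)$, so $\widetilde R(\cdot,v)$ is qmi with respect to $-K^*$ in the sense of Proposition \ref{prop 1} and the general theory of \cite[Part~I]{cuchiero2016affine} yields a unique global $K^*$-valued solution $\psi$ and a finite $\phi$.

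The main technical obstacle is justifying that $f_{u,v}$ belongs to the extended domain of $\mathcal L$ (so that the backward equation argument is valid) and running the uniqueness step for the enlarged, non-proper-cone state space $K\times K$. Both points I would handle by adapting Proposition \ref{prop action exp} (equivalently \cite[Proposition 4.12]{cuchiero}): for $u\in(K^*)^\circ$, $\langle u,x\rangle\ge\lambda_u\|x\|$ on $K$, so $f_{u,v}$ lies in $C_0(K\times K)$, and the identification $\mathcal L f_{u,v}=-f_{u,v}(\widetilde F+\langle\widetilde R,x\rangle)$ extends from $(K^*)^\circ\times K^*$ to the closure by continuity in $(u,v)$, which suffices to apply the uniqueness machinery for Feller semigroups.
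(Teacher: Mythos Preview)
Your approach is correct and reaches the same conclusion, but it diverges from the paper's proof in two places.

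For the affine transform formula itself, the paper does not invoke abstract uniqueness for the Kolmogorov backward equation or argue that the Feller property transfers to $(X,Y)$. Instead it applies It\^o's formula directly to $s\mapsto e^{-\phi(t-s,u,v)-\langle\psi(t-s,u,v),X_s\rangle-\langle v,Y_s\rangle}$ and checks that this is a uniformly integrable martingale on $[0,t]$ (for $t$ below the lifetime of $\psi$), using that $\Delta X_s=\Delta Y_s$ so the jump compensator produces exactly the factor $e^{-\langle\psi+v,\xi\rangle}-1$. This is the concrete realization of your backward-equation step and sidesteps the Feller issue on the enlarged, non-proper-cone state space that you flag as a technical obstacle.

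For global existence of $\psi$, the paper does \emph{not} use your observation that $\widetilde R(\cdot,v)$ is again an admissible $R$-function (note, incidentally, that replacing $\mu$ by the damped measure $e^{-\langle v,\xi\rangle}\mu(d\xi)$ also produces an extra constant term $\gamma_v=\int(1-e^{-\langle v,\xi\rangle})\mu(d\xi)\in K^*$, which you omitted but which does not spoil admissibility). Instead the paper compares $\psi(\cdot,u,v)$ with the solution $\psi^0(\cdot,u)$ of the $v=0$ Riccati equation via Volkmann's comparison theorem for qmi vector fields: since $v\in K^*$ makes the right-hand side $\succeq$ the $v=0$ right-hand side, one gets $\psi\succeq\psi^0$, and $\psi^0$ stays in $(K^*)^\circ$ for all time by \cite[Lemma~3.2]{cuchiero2016affine}. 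Hence $\psi$ cannot reach $\partial K^*$; norm-explosion is then ruled out because it would force the Laplace transform in \eqref{AFFET} to vanish at the explosion time. Your admissibility route is shorter and more structural; the paper's comparison argument is more self-contained and makes explicit why $\psi$ remains strictly inside the dual cone.
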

\begin{proof}
Due to \eqref{eq: estimate loc Lip}, the map
\[
R(u):= -\frac{1}{2} Q(u,u) + B^\top u- \int_{K \setminus\{0\}} \left( e^{-\langle u, \xi \rangle -\langle v,\xi\rangle}-1\right)\mu(d\xi)
\]
is locally Lipschitz on $(K^*)^\circ$.  Therefore, a unique local solution exists, and we denote by $t_+(u,v)>0$ the explosion time of $\psi(t,u,v)$.

As the jumps of $X$ are of finite total variation, we can write $X=X^c+\sum_{s\leq t} \Delta X_s$, with the continuous part $X^c$. Clearly, the components of
the extended process $(X,Y)$ jump simultaneously and have jumps of equal size ($\Delta X_s=\Delta Y_s$, almost surely), hence for a  bounded, continuous function $f(t,x,y)$, the process
\begin{align*}
&\sum_{s\leq t}\left(f(t,X_{s-}+\Delta X_s, Y_{s-}+\Delta Y_s)-f(t,X_{s-},Y_{s-}\right)\\\qquad&-\int_0^t \left( f(t,X_{s-}+\xi, Y_{s-}+\xi)-f(t,X_{s-},Y_{s-} \right)(m(d\xi)+\langle \mu(d\xi), X_{s_-}\rangle)ds
\end{align*}
is a uniformly integrable martingale. Therefore, by It\^o's formula, for any $t<t_+(u,v)$ the process
\[
e^{-\phi(t-s)-\langle \psi(t-s),X_s\rangle-\langle v, Y_s\rangle},\quad 0\leq s\leq t
\]
is a unformly integrable martingale and thus the affine transform formula \eqref{AFFET} holds for $t<t_+(u,v)$. Since $X$ is an
affine process, by assumption, the generalized Riccati differential equations
\begin{align*}
\partial_t \phi^0&= \langle b, \psi^0 \rangle - \int_{K \setminus\{0\}} \left(e^{-\langle \psi^0, \xi \rangle}-1 \right) m(d\xi), \\
\partial_t \psi^0&= -\frac{1}{2} Q(\psi^0,\psi^0) + B^\top \psi^0- \int_{K \setminus\{0\}} \left( e^{-\langle \psi^0, \xi \rangle}-1\right) \mu(d\xi),
\end{align*}
with initial conditions 
\[ 
\phi^0(0,u)=0,\quad \psi^0(0,u)=u
\]
have global solutions for any $u\in K^*$, and, due to the local Lipschitz property mentioned above, the solutions are unique for $u\in (K^*)^\circ$. Note, $\psi^0(t,u)$ remains in $(K^*)^\circ$ for all times $t\geq 0$, due to \cite[Lemma 3.2]{cuchiero2016affine}. Let $\preceq$ be the partial order induced by the dual cone. Furthermore, since $v\in K^*$, we have
\[
\partial_t\psi(t,u,v)\succeq -\frac{1}{2} Q(\psi,\psi) + B^\top \psi- \int_{K \setminus\{0\}} \left( e^{-\langle \psi, \xi \rangle} -1\right) \mu(d\xi).
\]
Hence for $u\in (K^*)^\circ$ and $v \in K^*$, we have by Volkmann's comparison result \cite{volkmann1973invarianz} that
$\psi(t,u,v)\succeq \psi^0(t,u)$ for any $t<t_+(u,v)$. Due to the afore mentioned property $\psi^0(t,u)\in (K^*)^\circ$, $\psi(t,u,v)$ does not touch the boundary of the dual cone $K^*$ in finite time. We conclude that if $t_+(u,v)<\infty$, then explosion in norm occured as $t\uparrow t_+(u,v)$. But this is impossible, as it would imply, due to
\eqref{AFFET}, that the Laplace transform $\mathbb E^{x,y}[e^{-\langle u, X_t\rangle-\langle v, Y_t\rangle}]=0$
for $t=t_+(u,v)$. Hence we conclude that $t_+(u,v)=\infty$ and the affine transform formula \eqref{AFFET} holds for all $t\geq 0$,
$u\in (K^*)^\circ$ and $v \in K^*$.
\end{proof}
The next statement shows the intuitive fact that before a process jumps, it cannot be distinguished from
a killed diffusion process $\hat X$ with the same characteristics as $X$, except having vanishing jump measures $m$ and $\mu$:
\begin{proposition}\label{prop: before jumps}
Suppose $\hat X$ is an affine processes with parameters $\hat Q=Q,\hat B=B,\hat b=b$, but $\hat m(d\xi)$ and $\hat\mu(d\xi)$ vanish. Then for any $t>0$, $x \in K$ and any $ A \in \mathcal B(K)$ it holds that
\begin{equation}
\mathbb P^x [X_t \in A, \tau >t] = \hat{\mathbb E}^x\left(e^{-\int_0^t (l+ \langle \Lambda, \hat X_s \rangle ) ds} 1_{\{\hat X_t\in A\}}\right),
\label{eq:Xandtau}
\end{equation}
where $l:=m(K)\geq 0$ and $\Lambda=\mu(K)\in K^*$ and $\tau=\inf\{s\geq 0\,|\,\Delta X_s\neq 0\}$.
\end{proposition}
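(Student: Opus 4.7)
The plan is to read off the identity from the augmented Laplace transform furnished by Lemma \ref{lem: affine technology}, combined with a limiting argument that singles out the event $\{\tau>t\}$. Specializing Lemma \ref{lem: affine technology} to $y=0$, the second coordinate becomes $Y_t=\sum_{s\leq t}\Delta X_s\in K$. Since the jump distribution of $X$ is supported on $K\setminus\{0\}$ and $K$ is proper, we have $\{\tau>t\}=\{Y_t=0\}$. Fixing any $v_0\in (K^*)^\circ$, Lemma \ref{lem: nonempty}\ref{cone prop 3} yields $\langle v_0,y\rangle>0$ for every $y\in K\setminus\{0\}$, so bounded convergence gives
\[
\mathbb E^{x,0}\bigl[e^{-\langle u, X_t\rangle}\mathbf 1_{\{\tau>t\}}\bigr]
= \lim_{\lambda\to\infty}\mathbb E^{x,0}\bigl[e^{-\langle u,X_t\rangle-\lambda\langle v_0,Y_t\rangle}\bigr]
\]
for every $u\in (K^*)^\circ$. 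By \eqref{AFFET}, the right-hand side equals $\exp\bigl(-\phi(t,u,\lambda v_0)-\langle\psi(t,u,\lambda v_0),x\rangle\bigr)$.

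Next, I would pass to the limit $\lambda\to\infty$ inside the Riccati system of Lemma \ref{lem: affine technology}: for $\xi\in K\setminus\{0\}$ one has $\langle v_0,\xi\rangle>0$, so dominated convergence collapses the jump integrals to their mass, $m(K)=l$ and $\mu(K)=\Lambda$. Combined with the a priori control on $\psi$ used in Lemma \ref{lem: affine technology} (Volkmann comparison forcing $\psi(t,u,\lambda v_0)\succeq \psi^{0}(t,u)\in (K^{*})^{\circ}$, which together with the monotonicity in $\lambda$ of the nonlinearity yields a locally uniform-in-$\lambda$ bound on $[0,t]$), a standard continuous-dependence-on-parameters argument gives convergence of $(\phi(\cdot,u,\lambda v_0),\psi(\cdot,u,\lambda v_0))$ to the unique solution $(\tilde\phi,\tilde\psi)$ of
\begin{align*}
\partial_t\tilde\psi &= -\tfrac12 Q(\tilde\psi,\tilde\psi)+B^\top\tilde\psi+\Lambda,\qquad \tilde\psi(0,u)=u,\\
\partial_t\tilde\phi &= \langle b,\tilde\psi\rangle+l,\qquad \tilde\phi(0,u)=0.
\end{align*}

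The last step is to identify $(\tilde\phi,\tilde\psi)$ with the Riccati exponents of the killed diffusion on the right-hand side of \eqref{eq:Xandtau}. Indeed, $\hat X$ is affine with parameters $(Q,B,b,0,0,0,0)$, and adjoining the Feynman--Kac multiplier $e^{-\int_0^t(l+\langle\Lambda,\hat X_s\rangle)ds}$ is equivalent, within the affine framework of Theorem \ref{th: main theorem}, to activating the killing coefficients $c=l$ and $\gamma=\Lambda$; the resulting Riccati equations \eqref{eq:Riccati}-\eqref{eq:FRLevyK} coincide exactly with the displayed system for $(\tilde\phi,\tilde\psi)$. Therefore
\[
\mathbb E^{x,0}\bigl[e^{-\langle u,X_t\rangle}\mathbf 1_{\{\tau>t\}}\bigr]
= \hat{\mathbb E}^x\bigl[e^{-\int_0^t(l+\langle\Lambda,\hat X_s\rangle)ds}\,e^{-\langle u,\hat X_t\rangle}\bigr]
\]
for every $u\in(K^*)^\circ$. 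Both sides are Laplace transforms of finite positive measures on $K$; injectivity of the Laplace transform (the exponents range over the interior of $K^*$, which is a determining set) then yields the measure identity \eqref{eq:Xandtau}.

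The delicate step is the $\lambda\to\infty$ limit of the Riccati solutions: the nonlinearity is not uniformly Lipschitz in $\lambda$ and changes discontinuously at $\lambda=\infty$, so one must combine the Volkmann-type lower bound already used in Lemma \ref{lem: affine technology} with monotonicity in $\lambda$ of the integral term (recall $-\int(e^{-\langle\psi,\xi\rangle-\lambda\langle v_0,\xi\rangle}-1)\mu(d\xi)\in K^*$ is increasing in $\lambda$ in the dual order) to obtain the locally uniform bounds needed for standard continuous-dependence results; the identification with the killed-diffusion Riccati system is then routine.
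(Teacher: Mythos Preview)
Your proposal is correct and follows essentially the same route as the paper: both arguments specialize Lemma~\ref{lem: affine technology} to $y=0$, send the Laplace variable in the $Y$-direction to infinity to isolate $\{\tau>t\}=\{Y_t=0\}$, pass this limit through the generalized Riccati system, and then identify the limiting exponents with those of the killed diffusion $\hat X$ (i.e.\ the affine process with $c=l$, $\gamma=\Lambda$), concluding via injectivity of the Laplace transform. If anything, your treatment of the $\lambda\to\infty$ passage in the Riccati ODEs---invoking the Volkmann lower bound and monotonicity in $\lambda$ to get locally uniform control---is more explicit than the paper's, which simply asserts the limit ``by continuity''.
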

\begin{proof}
In order to show equation \eqref{eq:Xandtau} we have to show that the images of the measures $1_{\{\tau > t\}}d\mathbb P^x$ and $e^{-\int_0^t (l+ \langle \Lambda, \hat X_s \rangle ) ds} d\hat{\mathbb P}^x$ under $X$ and $\hat X$, respectively, are equal. But for that it is enough to prove the identity of the Laplace transformations, i.e. that for any $t\geq 0$, $x\in K$
and $u \in -K^*$,
\begin{equation*}
\mathbb E^x[e^{\langle u, X_t \rangle} 1_{\{\tau >t\}} ]= \hat{\mathbb E}^x [ e^{-\int_0^t (l+\langle \Lambda, \hat X_s\rangle ) ds + \langle u, \hat X_t \rangle}].
\end{equation*}

We consider the $K^2$-valued process $(X,Y)$ as in Lemma \ref{lem: affine technology}. Clearly,
\[
\mathbb P^x[X_t \in A, \tau >t]=\mathbb P^x[X_t\in A, \sum_{s\leq t}X_s=0]=\mathbb P^x[X_t\in A, Y_t=y].
\]
For a random variable $Z \ge 0$ it holds that $\mathbb P[Z=0]=\lim_{w\to -\infty } \mathbb E[e^{wZ}]$.
Applying this trick to $Z:=\langle w_0,Y_t\rangle$ for $y=0$, for some $w_0\in (K^*)^\circ$ we get $\mathbb P^x[\tau >t] = \mathbb P^x[Y_t=0] = \lim_{w\to -\infty } \mathbb E[e^{w\langle v_0,Y_t\rangle}]$.

Setting $y=0$ in Lemma \ref{lem: affine technology} we obtain by continuity for $w \to - \infty$
\begin{align*}
\partial_t \phi^\infty &= \langle b, \psi^\infty \rangle + l, \\
\partial_t \psi^\infty &= -\frac{1}{2} Q( \psi^\infty,\psi^\infty) + B^\top \psi^\infty + \Lambda,
\end{align*}
subject to
\[ 
\phi^\infty(0)=0, \quad \psi^\infty(0)=u.
\]
Note that
\[
e^{\phi^\infty(t,u) + \langle x, \psi^\infty(t,u)\rangle} = 
\hat{\mathbb E}^x(e^{\langle u, X_t^*\rangle} ds )= 
\hat{\mathbb E}^x(e^{\langle u, \hat X_t\rangle} e^{-\int_0^t (l + \langle \Lambda, \hat X_s \rangle ) ds }),
\]
where $X^*$ is an affine process with parameters 
$Q,b,B,c=l,\gamma=\Lambda$ but with vanishing jump measures $\hat m(d\xi),\,\hat{\mu}(d\xi)$.

Summarizing, we have
\begin{align*}
\mathbb E^x[ e^{\langle u, X_t \rangle} 1_{\{ \tau > t \}}] &= \lim_{w \to - \infty} e^{\phi(t,(u,wv_0)) + \langle x, \psi(t,(u,wv_0))\rangle}\\& = e^{\psi^\infty(t,u) + \langle x, \psi^\infty(t,u)\rangle}= \hat{\mathbb E}^x[e^{\langle u, \hat X_t\rangle} e^{-\int_0^t (l + \langle \Lambda, \hat X_s \rangle ) ds }].
\end{align*}
\end{proof}
The following essentially states that adding finite activity jumps to a diffusion with strictly positive transition densities maintains 
irreducibility and aperiodicity.
\begin{proposition}\label{prop: irreducible}
Suppose $\hat X$ is an affine processes with parameters $\hat Q=Q,\hat B=B,\hat b=b$, but $\hat m(d\xi)$ and $\hat\mu(d\xi)$ vanish.

Suppose for any $t>0,x\in K$, the transition probabilities $\hat p_t(x,d\xi)$ of $\hat X$ have a Lebesgue density which is strictly positive on $K^\circ$. Then both $X$ and $\hat X$ are aperiodic and irreducible (with respect to the Lebesgue measure $\lambda$ on $K$).
\end{proposition}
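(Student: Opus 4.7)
The plan is to first establish $\lambda$-irreducibility and aperiodicity for the diffusion $\hat X$ directly from strict positivity of its densities, and then transfer both properties to $X$ via the ``before-jump'' identity of Proposition \ref{prop: before jumps}. The key observation is that, since $K$ is a proper closed convex cone with non-empty interior in a finite dimensional vector space (cf.~Lemma \ref{lem: nonempty}), its topological boundary $\partial K$ has Lebesgue measure zero, so $\lambda(A\cap K^\circ)=\lambda(A)$ for every Borel set $A\subseteq K$.

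For $\hat X$, I would argue as follows. Let $A\in \mathcal B(K)$ with $\lambda(A)>0$. Then
\[
\hat p_t(x,A)\;\ge\; \int_{A\cap K^\circ}\hat p_t(x,\xi)\,d\xi\;>\;0
\]
for every $t>0$ and every $x\in K$, by strict positivity of the density. In particular $\int_0^\infty e^{-t}\hat p_t(x,A)\,dt>0$, which proves $\lambda$-irreducibility; the maximal irreducibility measure therefore dominates $\lambda|_{K^\circ}$, whose support has non-empty interior. Now pick any compact set $C\subseteq K^\circ$ with $\lambda(C)>0$. By Proposition \ref{prop:compactpetite} applied to the skeleton chain $(\hat X_{k\Delta})_{k\in\mathbb N}$ of the Feller process $\hat X$, $C$ is petite. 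Since $\hat p_t(x,C)>0$ for every $t>0$ and every $x\in C$ by the same density argument, $C$ serves as the required (small/petite) set in Definition \ref{def:aperiodic}, so $\hat X$ is aperiodic.

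For $X$, I would combine the previous step with Proposition \ref{prop: before jumps}. For every $A\in\mathcal B(K)$, $t>0$ and $x\in K$,
\[
\mathbb P^x[X_t\in A]\;\ge\; \mathbb P^x[X_t\in A,\tau>t]\;=\;\hat{\mathbb E}^x\!\left[e^{-\int_0^t(l+\langle \Lambda,\hat X_s\rangle)\,ds}\,\mathbf 1_{\{\hat X_t\in A\}}\right].
\]
The integrand on the right is a strictly positive random variable, since $l\ge0$, $\Lambda\in K^*$ and $\hat X$ takes values in $K$, so $\int_0^t(l+\langle\Lambda,\hat X_s\rangle)ds<\infty$ almost surely. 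Whenever $\lambda(A)>0$ the event $\{\hat X_t\in A\}$ has positive $\hat{\mathbb P}^x$-probability by the first step, so the right hand side is strictly positive. This yields $\lambda$-irreducibility of $X$. Applied with $A=C$ (the compact set from the previous step), the same minorization gives $\mathbb P^x[X_t\in C]>0$ for every $t>0$ and every $x\in C$. Moreover, a compact set $C\subset K^\circ$ remains petite for the skeleton chain of $X$ (again by Proposition \ref{prop:compactpetite}, once irreducibility of the skeleton is verified, which follows from the same minorization and Chapman--Kolmogorov). Hence $C$ is small for $X$ as well, and $X$ is aperiodic.

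The main obstacle, in my view, is not the transfer of positivity from $\hat X$ to $X$ (the minorization in Proposition \ref{prop: before jumps} makes this essentially automatic), but rather verifying the \emph{uniform} minorization required by the definition of smallness in \ref{def:aperiodic}: strict positivity of $\hat p_t(x,\cdot)$ is a pointwise statement in $x$, whereas smallness demands a bound that is uniform in $x\in C$. The natural way to bridge this is to invoke Proposition \ref{prop:compactpetite} (petiteness of compact sets of a Feller skeleton chain) together with the fact that, for a $\psi$-irreducible chain for which $\hat p_t(x,C)>0$ for all $t>0$ and $x\in C$, petite sets can be used in place of small sets in the sense needed for Theorem \ref{dmtergodicity}; the strict positivity of the density eliminates any cyclic decomposition.
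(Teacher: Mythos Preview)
Your approach is correct and, for the irreducibility step, actually cleaner than the paper's. The paper also reduces to showing $\mathbb P^x[X_t\in A,\tau>t]>0$ via Proposition \ref{prop: before jumps}, but then argues via an $\varepsilon$-$\delta$ truncation: it sets $\varepsilon=\tfrac12\hat{\mathbb P}^x[\hat X_t\in A]$, uses path continuity of $\hat X$ to find $\eta_\varepsilon$ with $\hat{\mathbb P}^x[\sup_{s\le t}\langle\hat X_s,\Lambda\rangle\le\eta_\varepsilon]\ge 1-\varepsilon$, and extracts a lower bound $e^{-(l+\eta_\varepsilon)t}\varepsilon$. Your observation that the weight $e^{-\int_0^t(l+\langle\Lambda,\hat X_s\rangle)ds}$ is almost surely strictly positive (path continuity makes the integral finite) bypasses this and gives positivity of the expectation immediately whenever $\hat{\mathbb P}^x[\hat X_t\in A]>0$. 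The paper's route would buy a quantitative lower bound if one were needed, but for mere irreducibility your argument is the more economical one.

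For aperiodicity the two proofs coincide: the paper takes the compact ball $\mathcal C=\{\|\xi\|\le 1\}$, shows $p_t(x,\mathcal C)>0$ for all $t>0$, $x\in\mathcal C$, and then upgrades $\mathcal C$ from petite to small exactly as you indicate --- compact sets are petite for the skeleton chain by Proposition \ref{prop:compactpetite}, every skeleton is irreducible (hence aperiodic by \cite{TT1979}), so petite $=$ small for the skeleton, and a set small for some skeleton is small for the continuous-time process. Your closing paragraph identifies precisely this chain of implications; the paper carries it out without further ado, so you may state it with more confidence.
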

\begin{proof}
Let $\tau:= \inf \{ t>0 \mid \Delta X_t \neq 0\}$ be again the first jump time of $X$. By the law of total probability, for any Borel set $A$, and any $t>0$ and for any $x\in K$ we have
\[
\mathbb P^x[X_t \in A]\geq \mathbb P^x[X_t \in A, \tau >t]=\mathbb P^x[X_t \in A\mid \tau >t]\cdot \mathbb P^x[\tau >t].
\]
We show first that for any $A\in \mathcal B(K)$ with $\lambda(A) >0$ and any $t>0$ it holds that
\begin{equation}\label{eq: crucial}
\mathbb P^x[X_t \in A, \tau >t]>0, \quad\forall x\in K. \\
\end{equation}
By assumption, for any $A \in \mathcal B(K)$ with $\lambda(A) >0$ it holds that $\hat{\mathbb P}^x[\hat X_t \in A]>0$ $\forall x \in K$ (note that the boundary of $K$ is a null set, see e.g. \cite{Lang1986}). Fix $t>0, x \in K$ and $A \in \mathcal B(K)$ with $\lambda(A) >0$. Let $\varepsilon := \frac 1 2 \mathbb P^x[\hat X_t \in A]$. Due to the continuity of the path of $X$ there exists $\eta_\varepsilon >0$ such that 
\[ 
\mathbb P^x [\sup_{0 \leq s \le t} \langle \hat X_s,\Lambda\rangle \leq \eta_\varepsilon] \geq 1- \varepsilon. 
\]
Thus for $B_\varepsilon := \{ \sup_{0 \leq s \leq t} \langle X_s, \Lambda\rangle \leq \eta_\varepsilon \}$, we obtain
\[ 
\mathbb P^x [(\hat X_t \in A) \cap B_\varepsilon]\geq 2\varepsilon + 1 - \varepsilon - \mathbb P^x [\hat X_t \in A \cup B_\varepsilon) \geq \varepsilon . 
\]
Using $\mathbb P^x [\hat X_t \in A] \geq \mathbb P^x [(\hat X_t \in A) \cap B_\varepsilon]$ we can conclude
\begin{align*}
\mathbb P^x [X_t \in A, \tau > t]&= \mathbb E^x [e^{- \int_0^t (l + \langle \Lambda, \hat X_s \rangle ) ds} 1_{\{ \hat X_t \in A \}})]\\
&\geq e^{- (l + \eta_\varepsilon)t} \mathbb E^x[1_{\{(\hat X_t \in A) \cap B_\varepsilon\}} ]\\
& \geq e^{- (l + \eta_\varepsilon)t} \varepsilon >0,
\end{align*}
and estimate \eqref{eq: crucial} is shown.

Thus, for any $A\in\mathcal B(K)$ for which $\lambda(A)>0$, it follows that
$\hat p_t(x,A)>0$ and $\hat p_t(x,A)>0$ and thus the processes are $\lambda$--irreducible.

Finally, we show aperiodicity, i.e., that there exists a small set $\mathcal C$ and a time $T>0$ 
such that for any $t>T$,
\begin{equation}\label{prop pos}
\mathbb P^x[X_t \in \mathcal C] >0, \quad x\in \mathcal C.
\end{equation}
By strict positivity of the transition densities (see first part), the compact set $\mathcal C:=\{\xi\in K\mid \|\xi\|\leq 1\}$ satisfies \eqref{prop pos}; it therefore suffices to show that $\mathcal C$ is small for $X$: By Proposition \ref{prop:compactpetite}, every compact set is petite for any $T$--skeleton chain. Furthermore, by $\lambda$--irreducibility of $X$, every $T$-skeleton chain is irreducible and thus also aperiodic, see \cite[Proposition 1.2]{TT1979}. Therefore every petite set is small for every $T$-skeleton chain, see \cite[Proposition 5.5.7]{MT}. We have thus established that $\mathcal C$ is small for any $T$--skeleton chain ($T>0$). In other words, there exists $n\in\mathbb N$ and and a non-trivial measure $\nu_n(d\xi)$
such that for all $x\in \mathcal \mathcal C$ we have $p_{nT}(x,\cdot)=\int p_{kT}(x,\dot) a(dt)\geq \nu_n(\cdot)$, where $a(dt)=\delta_{nT}(dt)$. Hence, by definition, it is small for the continuous-time Markov process $X$ as well.\footnote{Besides, any small set is petite (as it is petite by definition, for a degenerate sampling distribution $a(dt)$).}\footnote{See the paragraph after Equation (10) in \cite{DMT1995}.}

Since $\hat X$ is just a process with the same characteristics as $X$, but with zero jumps, aperiodicity also holds for $\hat X$, and we are done.
\end{proof}
\begin{remark}\label{rem:diffdens}
\begin{itemize}
\item For a large class of diffusions on irreducible symmetric cones (the so-called ``Bru processes with linear drift"), the existence of a strictly positive Lebesgue density for each $t>0$ and $x\in K$ is characterized in terms of a standard controllability condition \cite[Section 5.1.4]{cuchiero2016affine}. In fact, in this case for each $t>0$, the process $X_t$ is non-centrally Wishart distributed, and in this case the transition density is explicitly given, is supported on $K^\circ$ and is strictly positive thereon. We provide in the next section as a special case the Wishart processes on the cones of positive semi-definite $d\times d$ matrices (in which the controllability condition is Theorem \ref{cor: wish} {\ref{cond: hypo}}).
\item
 In the pure jump case, \cite{Simon2011} establishes under a comparable controllability condition under infinite activity jumps the absolute continuity of the transition probabilities of an Ornstein-Uhlenbeck (OU) process. However, irreducibility holds under weaker conditions for the OU process, see e.g., \cite{masuda2004} and \cite{PigorschetStelzer2007b} for examples of non-degenerate/absolutely continuous stationary distributions in the context of cones).
\end{itemize}
\end{remark}
We now turn to to the pure-jump case. So $Q=0$ in addition to the assumptions made so far.
The following sufficient conditions for irreducibility and aperiodicity are inspired by similar results of \cite{StelzerVestweber2017} for a non-linear SDE in the cone of positive semi-definite matrices.
\begin{proposition}\label{prop:purejumpirr}
Let $X$ be a pure jump affine process as described above. If
\begin{enumerate}
\item \label{cond1}$m$ or $\langle \eta,\mu(\cdot)\rangle$ for one $\eta\in K^\circ$ have an absolutely continuous component with a strictly positive density on $K^\circ$,
\item $e^{Bt}(K^\circ)=K^\circ$ for all $t\in \R_+$ and $\tau(B)<0$,
\item \label{cond3}$m(K^\circ)>0$ or $b\in K^\circ$ provided $m$ has no absolutely continuous component with a strictly positive density on $K^\circ$,
\end{enumerate}
then $X$ is irreducible with respect to the Lebesgue measure on $B^{-1}b+K$ and aperiodic.
\end{proposition}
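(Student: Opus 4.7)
The plan is to mirror Proposition~\ref{prop: irreducible}, this time using the piecewise-deterministic structure available once $Q=0$ and the jumps have finite activity. Between jumps $X$ follows the linear ODE $\dot\phi = b+B\phi$ with explicit flow $\Phi_t(x)=e^{Bt}x-(I-e^{Bt})B^{-1}b$. Assumption~(ii) together with $\tau(B)<0$ yields (a) $\Phi_t(x)\to -B^{-1}b$ exponentially for every $x\in K$, (b) forward-invariance of the affine cone $-B^{-1}b+K$, and (c) a Lyapunov norm $\|\cdot\|_*$ on $V$ satisfying $\|e^{Bt}z\|_*\le e^{-ct}\|z\|_*$ for some $c>0$. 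Specializing Proposition~\ref{prop: before jumps} to $Q=0$ gives the workhorse identity
\[
\mathbb{P}^x[X_t\in A,\tau>t]=1_A(\Phi_t(x))\,\exp\!\left(-\int_0^t\bigl(l+\langle\Lambda,\Phi_s(x)\rangle\bigr)\,ds\right),
\]
where $\tau$ is the first jump time. (I read the conclusion as ``Lebesgue measure on $-B^{-1}b+K$'', the affine cone that carries the process asymptotically.)

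For irreducibility I decompose trajectories by the first jump time. Conditional on the first jump occurring at $s$, the post-jump position is $\Phi_s(x)+\Delta X_s$, with $\Delta X_s$ drawn from the mixture $(m(d\xi)+\langle\Phi_s(x),\mu(d\xi)\rangle)/(l+\langle\Phi_s(x),\Lambda\rangle)$. By assumption~(i), either $m$ or $\langle\eta,\mu\rangle$ carries a strictly positive density on $K^\circ$. In the first case, $\Phi_s(x)+\Delta X_s$ already has strictly positive Lebesgue density on the open set $\Phi_s(x)+K^\circ$. In the second case, assumption~(iii) supplies the required reachability of $K^\circ$: either $b\in K^\circ$ forces $\Phi_s(x)\in K^\circ$ for every $s>0$ and $x\in K$ (because $\int_0^s e^{Bu}b\,du\in K^\circ$), or $m(K^\circ)>0$ makes the first jump land in $K^\circ$ with positive probability; once $\Phi_s(x)\in K^\circ$, the measure $\langle\Phi_s(x),\mu\rangle$ inherits a positive density on $K^\circ$. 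Continuing with a further no-jump period of length $u>0$ propagates the density onto the open set $\Phi_u(\Phi_s(x))+K^\circ$, and since $\Phi_u(\Phi_s(x))\to -B^{-1}b$, the union $\bigcup_{s,u>0}(\Phi_u(\Phi_s(x))+K^\circ)$ contains $-B^{-1}b+K^\circ$. For any Borel $A\subseteq -B^{-1}b+K$ of positive Lebesgue measure, $A\cap(-B^{-1}b+K^\circ)$ has positive measure, and a standard Lebesgue-point argument yields $s,u$ with $A\cap(\Phi_u(\Phi_s(x))+K^\circ)$ of positive measure, whence $\mathbb{P}^x[X_{s+u}\in A]>0$ and $\mathbb{E}^x[\eta_A]>0$.

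For aperiodicity I take $\mathcal{C}:=-B^{-1}b+\{z\in K\,|\,\|z\|_*\le r\}$ for a small $r>0$. Forward-invariance of $\mathcal{C}$ under $\Phi$ follows from $\|e^{Bt}z\|_*\le\|z\|_*$ together with $e^{Bt}K\subseteq K$, while the nonempty interior of $K$ gives $\mathcal{C}$ nonempty interior in $-B^{-1}b+K$ and hence positive Lebesgue measure. For every $x\in\mathcal{C}$ and $t\ge 0$, forward-invariance gives $\Phi_t(x)\in\mathcal{C}$, so
\[
\mathbb{P}^x[X_t\in\mathcal{C}]\ge\mathbb{P}^x[X_t\in\mathcal{C},\tau>t]=\exp\!\left(-\int_0^t\bigl(l+\langle\Lambda,\Phi_s(x)\rangle\bigr)\,ds\right)>0,
\]
verifying the aperiodicity condition of Definition~\ref{def:aperiodic} with this $\mathcal{C}$. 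Smallness of $\mathcal{C}$ then follows exactly as in the proof of Proposition~\ref{prop: irreducible}: irreducibility combined with Proposition~\ref{prop:compactpetite} makes compact sets petite and hence small for every $T$-skeleton chain.

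The principal obstacle I anticipate is the coordination of assumptions~(i) and~(iii). When $m$ lacks a $K^\circ$-density, the irreducibility engine must be the state-dependent jumps from $\mu$, but these are dormant on the subset of $\partial K$ where $\langle\cdot,\eta\rangle$ vanishes; assumption~(iii) is precisely tailored to evacuate the process from such ``dead'' boundary strata. A secondary subtlety is showing that the family of reachable open sets $\Phi_u(\Phi_s(x))+K^\circ$ covers $-B^{-1}b+K^\circ$ in a way compatible with Lebesgue measure on arbitrary positive-measure sets $A$; this is handled by the asymptotic stability of $\Phi$ together with the openness and convexity of $K^\circ$.
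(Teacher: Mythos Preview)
Your approach is essentially the paper's: exploit the piecewise-deterministic structure, let the flow $\Phi_t$ drift close to the fixed point $-B^{-1}b$ (your sign correction is right; the paper writes $B^{-1}b$), then use one jump with absolutely continuous part to hit any positive-measure target in $-B^{-1}b+K$, and prove aperiodicity via a small flow-invariant neighbourhood of the fixed point together with the smallness argument from Proposition~\ref{prop: irreducible}. The paper makes the same moves, only it writes out the explicit exponential-waiting-time construction of $X$ and integrates over $\tau_1\in(T,t]$ rather than conditioning on $\tau_1=s$; your shrinking to $\tilde A=A\cap(-B^{-1}b+\epsilon\eta+K)$ before choosing $T$ (what you call the ``Lebesgue-point argument'') is exactly what the paper does.

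There is one place where your sketch is too quick. In the sub-case where $m$ lacks a $K^\circ$-density and reachability comes from $m(K^\circ)>0$, a \emph{single} jump does not suffice: the first jump (necessarily drawn from $m$, since the state may sit on $\partial K$ where $\langle\cdot,\mu\rangle$ vanishes) moves the process into $K^\circ$ but carries no density; only a \emph{second} jump, now drawn from $\langle X_{\tau_1},\mu(\cdot)\rangle$ with $X_{\tau_1}\in K^\circ$, supplies the absolutely continuous part. Your sentence ``once $\Phi_s(x)\in K^\circ$, the measure $\langle\Phi_s(x),\mu\rangle$ inherits a positive density'' conflates the pre-jump and post-jump positions and implicitly needs this extra jump. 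The paper handles this more cleanly by first proving $\mathbb P^x[X_t\in K^\circ]>0$ for all $x\in K$, $t>0$, and then invoking the Markov property to assume $x\in K^\circ$ from the outset; you should insert the same reduction.
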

\begin{proof} 
Since aperiodicity and irreducibility are properties of the law of the process, we can assume that $X$ is constructed as follows on an appropriate probability space: Let $(\sigma_j)_{j\in\mathbb{N}}$ be an iid sequence of $Exp(1)$ distributed random variables.
Set
\begin{align*}
\tau_1&=\rho_1= \inf\left\{z>0:\int_{0}^z \left(m(K)+\left\langle e^{Bt} x+\int_{0}^te^{B(t-s)}bdt,\mu(K)\right\rangle\right)dt \geq \sigma_1\right\}\\
X_t&=e^{Bt}x+\int_0^te^{B(t-s)}bds\, \mbox{ for } t\in [0,\tau_1).
\end{align*}
Condition \ref{cond3} implies that $\tau_1<\infty$ almost surely. Next draw a $K$-valued random variable $\Delta X_{\tau_1}$ with distribution $(m(\cdot)+\langle X_{\tau_1-},\mu(\cdot) \rangle)/(m(K)+\langle X_{\tau_1-},\mu(K)\rangle)$ and otherwise independent of $(\sigma_j)_{j\in\mathbb{N}}$. Set 
\[
X_{\tau_1}=e^{B\tau_1}x+\int_0^{\tau_1}e^{B(\tau_1-s)}bds+\Delta X_{\tau_1}.
\]
For $j=2,3,\ldots$ apply the following procedure iteratively. Set
\begin{align*}
\rho_j&= \inf\left\{z>0:\int\limits_{\tau_{j-1}}^{\tau_{j-1}+z} \left(m(K)+\left\langle e^{Bt} x+\int_{0}^te^{B(t-s)}bdt+\sum_{k=1}^{j-1}e^{B(t-\tau_k)}\Delta X_{\tau_k},\mu(K)\right\rangle\right)dt \geq \sigma_j\right\},\\
\tau_j&=\tau_{j-1}+\rho_j,\\
X_t&=e^{Bt}x+\int_0^te^{B(t-s)}bds+\sum_{k=1}^{j-1}e^{B(t-\tau_k)}\Delta X_{\tau_k}\, \mbox{ for } t\in (\tau_{j-1},\tau_j).
\end{align*}
Condition \ref{cond3} implies that $\rho_j<\infty$ a.s. and so $\tau_j<\infty$ a.s. 
Then draw a $K$-valued random variable $\Delta X_{\tau_j}$ with distribution $(m(\cdot)+\langle X_{\tau_j-},\mu(\cdot) \rangle)/(m(K)+\langle X_{\tau_j-},\mu(K)\rangle)$ and otherwise independent of $\{(\sigma_j)_{j\in\mathbb{N}}, (\Delta X_{\tau_k})_{k=1,..j-1}\}$.
Set 
\[
X_{\tau_j}=e^{B\tau_j}x+\int_0^{\tau_j}e^{B(\tau_j-s)}bds+\sum_{k=1}^{j-1}e^{B(\tau_j-\tau_k)}\Delta X_{\tau_k}+\Delta X_{\tau_j}.
\]
Obviously this gives a process $X$ with the correct semimartingale characteristics (see \cite{Jacodetal2003} for a comprehensive treatment). 

Furthermore, by an adaption of the proof of \cite[Proposition 5.3]{cuchiero}\footnote{ Set $Q=0$ and replace $S_d^{++}$ by $(K^*)^\circ$ in \cite[Proposition 5.3]{cuchiero}.}, the associated generalized Riccati differential equations \eqref{eq:RiccatiF}--\eqref{eq:RiccatiR} have a unique global solution
for any initial data $u\in (K^*)^\circ$. We introduce the filtration $\mathcal F:=(\mathcal F_t)_{t\geq 0}$, where $\mathcal F_t:=\sigma(X_s\mid s\leq t)$. By It\^o's formula and the generalized Riccati differential equations, we find that for any $t>0$, the process $e^{-\Phi(t-s,u)-\langle \Psi(t-s,u),X_s\rangle}$ is a uniformly integrable martingale on $[0,t]$, hence we conclude
the affine transform formula holds, that is,
\[
\mathbb E[e^{-\langle u,X_t\rangle}\mid \mathcal F_s]=e^{-\Phi(t-s,u)-\langle \Psi(t-s,u),X_s\rangle}, \quad 0\leq s<t.
\]
Therefore, by a similar argument as in \cite[Section 8]{filipovic2009affine}, the Laplace
transforms of the finite-dimensional distributions of $X$ can be computed explicitly in terms of the functional characteristics $\Phi$, $\Psi$.\footnote{Due to a density argument, it suffices to consider Laplace variables on $(K^*)^\circ$.} Since the finite-dimensional distributions uniquely characterize the process in law, $X$ is an affine process with parameters $b,B,m(d\xi)$ and $\mu(d\xi)$, as desired.

First, we  establish the claimed irreducibility: Observe that for any $x\in K$,
\begin{equation}
\lim_{t\to \infty}\left(e^{Bt}x+\int_0^te^{B(t-s)}bds\right)=B^{-1}b \in K.
\end{equation}
So from wherever we start we get eventually ``down'' towards $B^{-1}b$, as long as no jumps occur. The rest of the proof is about rigorously establishing that from there we can reach any relevant set with positive probability at any large enough time.  Fix some $x\in K$ and a measurable set $A\subseteq B^{-1}b+K$ with non-vanishing Lebesgue measure. It suffices to show that there exists a $T>0$ such that
\[
\mathbb P^x[X_t\in A]>0\,\forall t> T.
\]
The above construction of $X$ and Condition \ref{cond3} imply that $\mathbb P^x[X_t\in K^\circ]>0$ for all $x\in K$ and $t>0$. Therefore, we can assume w.l.o.g. that $x\in K^\circ$ and thus $\mathbb P^x [X_t\in K^\circ\,\forall \, t\in \mathbb R^+]=1$. Pick an $\eta\in K^\circ$ and an $\epsilon >0$ such that $\lambda(\widetilde A)>0$ for $\widetilde A:=A\cap(\epsilon \eta+B^{-1}b+K)$. Now we can choose a $T>0$ such that $e^{Bt}x+\int_0^te^{B(t-s)}bds\preceq\epsilon \eta+B^{-1}b$ for all $t\geq T$. 
Pick an arbitrary $t>T$. 
\begin{align*}
\mathbb P^x[X_t\in A]&\geq \mathbb P^x[\{X_t\in \widetilde A\}\cap \{\tau_1\in (T,t]\}\cap\{\tau_2>t\}]\\&=\int_T^t \mathbb P^x\left[\left.\Delta X_{\tau_1}\in e^{-B(t-s)}\left(\widetilde A- e^{Bt}x-\int_0^te^{B(t-u)}bdu\right)\right| \tau_2>t,\tau_1=s\right]\mathbb P_{\tau_1}^x[ds|{\tau_2>t}]\\&\quad\cdot \mathbb P^x[\tau_2>t].
\end{align*}
Here $\mathbb P_{\tau_1}^x[\cdot|{\tau_2>t}]$ is the conditional law of $\tau_1$ under $\mathbb P^x$ given $\tau_2>t$ and $\mathbb P^x\left[\left.\cdot \right| \tau_2>t,\tau_1=s\right] $ a version of the regular conditional probability given $\tau_1=s$. From the above explicit construction we now see that $\mathbb P^x[\tau_2>t]>0$ and that $P_{\tau_1}^x(\cdot|{\tau_2>t})$ is equivalent to the Lebesgue measure on $(T,t]$. Observe that the measures $(\langle \eta,\mu(\cdot)\rangle)_{\eta\in K^\circ}$ are all equivalent. Hence, $\mathcal{M}^+(\eta):=\{M\in \mathcal{B}(K): m(M)+\langle \eta,\mu(M)\rangle>0\}$ (with $\mathcal{B}(K)$ denoting the Borel-$\sigma$-algebra over $K$) does not depend on $\eta\in K^\circ$. Due to Condition \ref{cond1} $\mathcal{M}^+(\eta)\supseteq \{M\in \mathcal{B}(K): \lambda(M)>0\}$. From the explicit construction we see that $\mathbb P^x\left[\left.\Delta X_{\tau_1} \in M\right| \tau_2>t,\tau_1=s\right]>0$ holds for all $M\in\mathcal{M}^+(\eta)$ and $s\in(T,t]$. Due to our choices the set $\widetilde A- e^{Bt}x-\int_0^te^{B(t-u)}bdu$ is in $ K$ and has non-zero Lebesgue measure. As $e^{B(t-s)}$ is a diffeomorphism the standard transformation formula (e.g. \cite[Theorem 1.101]{Klenke2014}), implies that $e^{-B(t-s)}\left(\widetilde A- e^{Bt}x-\int_0^te^{B(t-u)}bdu\right) $ has non-zero Lebesgue measure for all $s\in(T,t]$. Thus it belongs to $M\in\mathcal{M}^+(\eta)$. Hence, $\mathbb P^x\left[\left.\Delta X_{\tau_1}\in e^{-B(t-s)}\left(\widetilde A- e^{Bt}x-\int_0^te^{B(t-u)}bdu\right)\right| \tau_2>t,\tau_1=s\right]>0$ holds for all $s\in (T,t]$ and therefore $\mathbb P^x[X_t\in A]>0$ for all $t>T$ concluding the proof of the irreducibility.

It remains to show the aperiodicity.
Consider the set $\mathcal{C}=B_{1}(B^{-1}b)\cap K$. Arguing as in the proof of Proposition \ref{prop: irreducible} we see that $\mathcal{C}$ is small. Choose $T>0$ such that $\int_0^te^{B(t-s)}bds\in B_{1/2}(B^{-1}b)$ and $\|e^{Bt}x\|<1/2$ for all $x\in \mathcal{C}$ and $t>T$. Hence, $e^{Bt}x+\int_0^te^{B(t-s)}bds \in \mathcal{C}$ and therefore
\[
\mathbb P^x[X_t\in \mathcal{C}]\geq \mathbb P^x[ \tau_1>t]>0\,\, \forall t>T, x\in\mathcal{C}.
\]
Therefore the process is aperiodic.
\end{proof}
\begin{remark}\label{rem:purejumpirr}
\begin{itemize}
\item Similarly to \cite[Corollary 4.7]{StelzerVestweber2017} one can relax the conditions to demanding that $m$ or $\langle \eta,\mu(\cdot)\rangle$ for one $\eta\in K^\circ$ have an absolutely continuous component with a strictly positive density only in a neighbourhood of zero in $K^\circ$. Then $X$ is irreducible with respect to the Lebesgue measure restricted to a neighbourhood of $B^{-1}b$ and aperiodic.
\item If $K$ is the cone of $d\times d$ symmetric positive definite matrices, one can also build the jumps by taking the outer products of jumps in $\R^d$ and the above results remain true provided that they have a strictly positive density on $\R^d$ (in a neighbourhood of zero). This can be shown by taking not only one jump after time $T$ as in the above proof but at least $d$ jumps and arguing similar to \cite[{Theorem 4.6}]{StelzerVestweber2017}. 
\end{itemize}
\end{remark}

\section{Applications and extensions.}\label{sec: appl}
\subsection{Wishart processes}\label{Sec: wishart examples}
Let $V=S_d$ be the symmetric $d\times d$ matrices, endowed with the inner product $\langle x,y\rangle=\text{Tr}(xy)$, where $xy$
denotes the matrix product, and $\text{Tr}$ is the trace of a $d\times d$ matrix. On the positive semi-definite symmetric $d\times d$ matrices $K=S_d^+\subset V$, the Bru processes with linear drift are just the Wishart processes, which are for each initial value $x\in S_d^+$ the global unique weak solution of
\begin{align}
dX_t &= \sqrt{X_t} dW_t Q + Q^\top dW_t^\top \sqrt{X_t} + (X_t \beta + \beta^\top X_t + \delta Q^\top Q)dt,\label{eq:wishartsde} \\
X_0 & = x \in S_d^+,
\end{align}
where $Q,\beta $ are real $d\times d$ matrix (not necessarily symmetric), $W$ is a standard $d\times d$
Brownian motion matrix, and $\delta> d-1$. In this case, the parameters in the sense of Theorem \ref{th: main theorem} are
\begin{itemize}
\item $b=\delta \alpha$, where $\alpha=Q^\top Q\in S_d^+$, and $\delta>d-1$,
\item $Q: V\times V\rightarrow V$, $Q(u,u)=2 u \alpha u$, and
\item $B^\top: V\rightarrow V$ is given by $B^\top(u)=\beta u+u\beta^\top$,
\end{itemize}
while $c$, $\gamma$, $m(d\xi)$ and $\mu(d\xi)$ vanish. 

Under the above conditions \cite{cuchiero} showed that the SDE \eqref{eq:wishartsde} has a unique global weak solution remaining in $S_d^+$. Strengthening the above assumptions to $\delta\geq d+1$ \cite{Mayerhofer2009} showed that a unique strong solution exists which always stays in the interior of the cone provided the initial value is strictly positive definite. Intuitively, the inward pointing drift must be sufficiently large to ensure that the Brownian term cannot move the process through the boundary.

Note that for any $x,u\in S_d^+$ by the definition of the inner product, and the cyclic property of the trace,
\[
\langle B^\top(u),x\rangle=\langle \beta u+u\beta^\top,x)=\langle u,x\beta\rangle+\langle u,\beta^\top x\rangle=\langle u, \beta^\top x+x\beta\rangle,
\]
whence $B(u)=u\beta+\beta^\top u$.

An application of Theorem \ref{thm: linag} to the self-dual cone $K=S_d^+$ is the following:
\begin{corollary}\label{cor: linag sdp}
The following are equivalent:
\begin{enumerate}
\item \label{itemx1} $\tau(\beta)<0$.
\item \label{qmi 1app} $\tau(B)<0$.
\item \label{qmi 2app} There exists $v\in S_d^{++}$ such that $\beta v+v\beta^\top\in -S_d^{++}$.
\end{enumerate}
\end{corollary}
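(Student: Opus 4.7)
My plan is to derive the three-way equivalence by combining Theorem \ref{thm: linag} (which delivers (ii) $\Leftrightarrow$ (iii) essentially for free) with the classical matrix Lyapunov theorem (to connect (i) with (iii)). The cone $K=S_d^+$ is a generating, proper, closed convex cone; it is self-dual, so $K^*=K$ and $(K^*)^\circ=S_d^{++}$, which makes Theorem \ref{thm: linag} directly applicable.

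First I would verify that $B:u\mapsto u\beta+\beta^\top u$ is quasimonotone increasing with respect to $S_d^+$. By self-duality and Remark \ref{rem both qmi} it suffices to check that $B^\top u=\beta u+u\beta^\top$ is qmi with respect to $K^*=S_d^+$. The key observation is that for $u,v\in S_d^+$ the relation $\langle v,u\rangle=\operatorname{Tr}(vu)=\|u^{1/2}v^{1/2}\|_F^2=0$ forces $u^{1/2}v^{1/2}=0$, hence $uv=vu=0$. Using the cyclic property of the trace then gives
\[
\operatorname{Tr}\bigl(v\,B^\top u\bigr)=\operatorname{Tr}(\beta uv)+\operatorname{Tr}(\beta^\top vu)=0\geq 0,
\]
so $B^\top$, and therefore $B$, is qmi. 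Applying Theorem \ref{thm: linag} to $A=B$ with $K=S_d^+$ and noting that the adjoint of $B$ is precisely the Lyapunov operator $B^\top v=\beta v+v\beta^\top$, the equivalence $\tau(B)<0 \Leftrightarrow \exists v\in S_d^{++}\text{ with }\beta v+v\beta^\top\in-S_d^{++}$ is immediate, establishing (ii) $\Leftrightarrow$ (iii).

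To close the loop via (i), I would invoke the classical matrix Lyapunov theorem: a real $d\times d$ matrix $A$ is Hurwitz, i.e.\ $\tau(A)<0$, if and only if there exists $P\in S_d^{++}$ satisfying $A^\top P+PA\in-S_d^{++}$ (one direction is proved by differentiating $t\mapsto u(t)^\top Pu(t)$ along the flow $\dot u=Au$; the converse by setting $P=\int_0^\infty e^{A^\top t}e^{At}dt$). Applying this to $A=\beta^\top$ and using $\tau(\beta)=\tau(\beta^\top)$ gives (i) $\Leftrightarrow$ (iii). Alternatively, a spectral computation using the Jordan form of $\beta$ shows that the eigenvalues of $B$ on $S_d$ are exactly the sums $\lambda_i+\lambda_j$ ($i\le j$) of the eigenvalues of $\beta$, whence $\tau(B)=2\tau(\beta)$ and (i) $\Leftrightarrow$ (ii) follows directly. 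There is no serious technical obstacle in the argument; the only point where self-duality of $S_d^+$ genuinely enters is the implication $\operatorname{Tr}(vu)=0\Rightarrow vu=0$ in the qmi verification.
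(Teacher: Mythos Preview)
Your argument is correct. The equivalence (ii) $\Leftrightarrow$ (iii) via Theorem \ref{thm: linag} is exactly the paper's route; you are just more explicit about why $B$ (equivalently $B^\top$) is qmi, supplying the computation $\operatorname{Tr}(vu)=0\Rightarrow uv=vu=0$ that the paper subsumes under ``by construction'' (the paper even remarks that $-B$ is qmi as well, which your identity $\langle v,B^\top u\rangle=0$ confirms).

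Where you differ is in closing the triangle. The paper links (i) and (ii) directly by a spectral argument: it quotes \cite[Theorem 4.45]{Hornetal1991} for $\sigma(B)\subseteq\sigma(\beta)+\sigma(\beta)$ and then produces explicit Hermitian eigenvectors $xy^*+yx^*$ to obtain the reverse inclusion, whence $\tau(B)=2\tau(\beta)$. Your primary route instead connects (i) with (iii) via the classical Lyapunov theorem applied to $\beta^\top$. Both are short and standard; your approach has the advantage of being entirely self-contained (no external reference needed, and no passage through the complexification/Hermitian extension), while the paper's spectral identity $\sigma(B)=\sigma(\beta)+\sigma(\beta)$ gives slightly more information than the bare sign of $\tau$. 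Your alternative spectral remark is essentially the paper's argument in condensed form.
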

\begin{proof}
By construction, the linear map $B$ is a qmi map on $S_d^+$ with respect to $S_d^+$ (in fact, even $-B$ is quasi monotone increasing). Therefore, Theorem \ref{thm: linag}
gives the equivalent of \ref{qmi 1app} and \ref{qmi 2app}.

It remains to prove that \ref{itemx1} is equivalent to any of the two equivalent statements \ref{qmi 1app} and \ref{qmi 2app}.

By \cite[Theorem 4.45]{Hornetal1991} we have $\sigma(B)\subseteq \sigma(\beta)+\sigma(\beta)$ (as our operator $B$ is acting only on $S_d$), which settles the proof of
\ref{itemx1} $\Rightarrow$ \ref{qmi 1app}.

Let now $\lambda,\mu\in \mathbb{C}$ be two eigenvalues of $\beta^T$ with corresponding eigenvectors $x,y\in \mathbb{C}^d$. Then $\beta^T (xy^*+yx^*)+(xy^*+yx^*)^*\beta=(\lambda+\mu)(xy^*+yx^*)$. As $xy^*+yx^*$ is Hermitian this shows that $B^T$ has the eigenvalue $\lambda+\mu$ as an operator over the Hermitian matrices and thus over the real symmetric matrices. Hence, $\sigma(B)\supseteq \sigma(\beta)+\sigma(\beta)$. This settles the proof of \ref{qmi 1app} $\Rightarrow$ \ref{itemx1}. 
\end{proof}

A consequence of Corollary \ref{cor:expmom}, Proposition \ref{prop: irreducible} and Corollary \ref{cor: linag sdp} is the geometric ergodicity of Wishart processes:
\begin{theorem}\label{cor: wish}
Let $X$ be a Wishart process with parameters $(\delta, \beta, Q)$. Assume that
\begin{enumerate}
\item $\tau(\beta)<0$ (or, any equivalent condition in Corollary \ref{cor: linag sdp}).
\item \label{cond: hypo} The $d \times d^2$ matrix $Q^\top | \beta Q^\top | \ldots | \beta^{d-1} Q^\top$ has maximal rank (equals $d$). 
\end{enumerate}
Then $X$ is geometrically ergodic, positive Harris recurrent and the stationary distribution has some finite exponential moments.
\end{theorem}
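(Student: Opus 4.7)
The plan is to verify the hypotheses of Corollary \ref{cor:expmom} in the Wishart setting, since that corollary delivers geometric ergodicity, positive Harris recurrence, and the existence of finite exponential moments of the stationary distribution in one go. Many of those hypotheses simplify drastically because the Wishart process is a pure diffusion: its jump measures $m$ and $\mu$ vanish identically, and the killing parameters satisfy $c=0$, $\gamma=0$.

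First I would verify the drift condition (ii) of Corollary \ref{cor:expmom}. Since $\mu = 0$, the required condition collapses to the existence of some $\eta_0 \in S_d^{++} = (K^*)^\circ$ with $B^\top \eta_0 \in -S_d^{++}$. By Corollary \ref{cor: linag sdp}, this is precisely equivalent to $\tau(\beta)<0$, which is assumption (i). The integrability requirement \eqref{eq:ass_exponentialm1y} is trivial since $m$ and $\mu$ both vanish.

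Second I would verify the irreducibility and aperiodicity required in assumption (i) of Corollary \ref{cor:expmom}. This is the substantive step. I would invoke Proposition \ref{prop: irreducible} applied with $\hat X = X$, since the Wishart process has no jumps and so already coincides with the associated pure-diffusion reference process. That proposition reduces the problem to showing that for each $t>0$ and $x \in S_d^+$, the transition kernel $p_t(x,\cdot)$ admits a Lebesgue density that is strictly positive on $K^\circ = S_d^{++}$. As pointed out in Remark \ref{rem:diffdens} (drawing on \cite[Section 5.1.4]{cuchiero2016affine}), this property is characterized by precisely the Kalman-type controllability condition that the matrix $[Q^\top \mid \beta Q^\top \mid \cdots \mid \beta^{d-1} Q^\top]$ has maximal rank $d$, which is assumption (ii). Under this condition, $X_t$ follows an explicit noncentral Wishart law supported on and strictly positive throughout $S_d^{++}$, giving Lebesgue-irreducibility and aperiodicity via Proposition \ref{prop: irreducible}.

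With these two verifications in place, Corollary \ref{cor:expmom} directly yields the three claimed conclusions. The main subtlety I anticipate is purely one of bookkeeping: ensuring the noncentral Wishart density argument handles the boundary case (i.e.\ starting from a rank-deficient $x \in S_d^+ \setminus S_d^{++}$), but this is already encoded in the cited characterization in \cite{cuchiero2016affine}, so in practice the proof is essentially a citation chain combining Corollaries \ref{cor:expmom} and \ref{cor: linag sdp} with Proposition \ref{prop: irreducible}.
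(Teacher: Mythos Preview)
Your proposal is correct and follows essentially the same route as the paper: verify the hypotheses of Corollary~\ref{cor:expmom} by using Corollary~\ref{cor: linag sdp} for the drift condition and Proposition~\ref{prop: irreducible} for irreducibility and aperiodicity. The only minor difference is that the paper justifies strict positivity of the transition density directly via the zonal-polynomial series expansion from \cite{mayerhofer2012} and \cite{mayerhofer2017} (noting that $C_{(1,\ldots,1)}=\det$ is strictly positive on $S_d^{++}$), whereas you appeal to Remark~\ref{rem:diffdens} and \cite[Section~5.1.4]{cuchiero2016affine}; both are valid and lead to the same conclusion.
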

\begin{remark}
As the distribution of Wishart processes is exactly known and the stationary distribution has to be a Wishart distribution, it is clear that the stationary distribution has some finite exponential moments. The main implication of our approach is that we get geometric ergodicity and thus exponentially fast convergence in the total variation norm (and actually in an exponential norm).
\end{remark}
\begin{proof}[Proof of Theorem \ref{cor: wish}]
It suffices to show that all assumptions of Corollary \ref{cor:expmom} are satisfied:

By \cite[Theorem 7.1]{mayerhofer2012}, $X_t\mid X_0=x$ admits a density, for each $t>0$. By inspection of the series expansion of the density \cite[eq (5.3)]{mayerhofer2012},
we see that if for some multi-index $\kappa$, the zonal polynomial $C_\kappa$ is strictly positive on $S_d^{++}$, then the density is strictly positive thereon. In fact, by
\cite[Lemma 3.1]{mayerhofer2017} this is the case for $\kappa=(1,\dots,1)$, which is fairly obvious from the fact that in this case $C_\kappa=\det$, because the determinant is invariant under the action of the orthogonal group. By Proposition \ref{prop: irreducible}, $X$ is aperiodic and irreducible and thus satisfies Condition \ref{thm11cor} of Corollary \ref{cor:expmom}.

By Corollary \ref{cor: linag sdp} also Condition \ref{thm11cor} of Corollary \ref{cor:expmom} is satisfied, and the latter corollary concludes the proof.
\end{proof}
Of course, we can extend the results of this section by adding jumps. Consider the process
\begin{align}
dX_t &= \sqrt{X_t} dW_t Q + Q^\top dW_t^\top \sqrt{X_t} + (X_t \beta + \beta^\top X_t + \delta Q^\top Q)dt+ dJ_t, \\
X_0 & = x \in S_d^+,
\end{align}
where $Q,\beta $ are real $d\times d$ matrix (not necessarily symmetric), $W$ is a standard $d\times d$
Brownian motion matrix, and $\delta> d-1$. Furthermore $J_t=\sum_{s\leq t}\Delta X_s$ has absolutely continuous compensator $\nu(X_t,d\xi)dt=\left(m(d\xi)+\langle X_t, \mu(d\xi)\rangle\right)dt$, where $m(d\xi)$ is a finite positive measure on $S_d^+$ satisfying $m(S_d^+)=l$, and $\mu(d\xi)$ is an $S_d^{+}$--valued measure on $S_d^+$ satisfying
$\mu(S_d^+)=\Lambda$. In this case, the parameters in the sense of Theorem \ref{th: main theorem} are
\begin{itemize}
\item $b=\delta \alpha$, where $\alpha=Q^\top Q\in S_d^+$, and $\delta>d-1$,
\item $Q: V\times V\rightarrow V$, $Q(u,u)=2 u \alpha u$, and
\item $B^\top: V\rightarrow V$ is given by $B^\top(u)=\beta u+u\beta^\top$,
\item The jump measures $m(d\xi)$ and $\mu(d\xi)$ as above. 
\end{itemize}
We shall refer to this class of affine jump-diffusions as {\it Wishart processes with jumps}.
\begin{theorem}\label{cor: wish with jumps}
Let $X$ be a Wishart process with jumps. Assume that
\begin{enumerate}
\item \label{pointt1} The eigenvalues of the linear map $L:S_d\rightarrow S_d$ defined by
\[
Lu=\beta u+u\beta^\top+\int_{S_d^+}\tr( \xi u) \mu(d\xi)
\]
have strictly negative real parts.
\item there exists $\eta_0\in S_d^{++} $ such that $\int_{\{ \|x\|\ge 1\}} e^{ \tr(\eta_0 \xi)} (m(d\xi)+\tr(\mu(d\xi),x)) < \infty.
$
\item \label{pointt2} The $d \times d^2$ matrix $Q^\top | \beta Q^\top | \ldots | \beta^{d-1} Q^\top$ has maximal rank (equals $d$). 
\end{enumerate}
Then $X$ is geometrically ergodic, positive Harris recurrent and the stationary distribution has some finite exponential moments.
\end{theorem}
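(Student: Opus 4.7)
The plan is to verify the three hypotheses of Corollary~\ref{cor:expmom} on the self-dual cone $K=K^{*}=S_d^+$. Since $m$ is finite, $\mu$ is $S_d^+$-valued with total mass $\Lambda$, and assumption~(ii) in particular forces finite first moments for large jumps, the jump behavior is of compound Poisson type with an integrable state-dependent intensity; we may therefore take the truncation function $\chi\equiv 0$. With this choice, the drift condition \eqref{eq:conditionbeta2xx} collapses to $L\eta_0\in -S_d^{++}$, where $L$ is precisely the operator appearing in assumption~(i).

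Next I would check that $L$ is quasi-monotone increasing with respect to $S_d^+$. The summand $B^{\top}u=\beta u+u\beta^{\top}$ is qmi by the admissibility condition~\ref{eq:lin_drift1} of Theorem~\ref{th: main theorem} (specialized to $\chi=0$), while $Tu:=\int_{S_d^+}\tr(\xi u)\,\mu(d\xi)$ even maps $S_d^+$ into itself since $\mu$ is $S_d^+$-valued and $\tr(\xi u)\ge 0$ on $S_d^+\times S_d^+$, hence is a fortiori qmi. Thus $L=B^{\top}+T$ is qmi, and by Remark~\ref{rem both qmi} together with the self-duality of $S_d^+$, so is $L^{\top}$, with $\tau(L^{\top})=\tau(L)<0$ by hypothesis~(i). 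Theorem~\ref{thm: linag} applied to $L^{\top}$ therefore yields some $\eta_1\in S_d^{++}$ with $(L^{\top})^{\top}\eta_1=L\eta_1\in -S_d^{++}$. To reconcile this with assumption~(ii), which supplies a possibly different vector $\eta_0\in S_d^{++}$ realizing the exponential-moment bound, I would choose $c>0$ small enough that $c\eta_1\preceq \eta_0$; by linearity $L(c\eta_1)=cL\eta_1\in -S_d^{++}$, and the moment bound survives along the domination $\tr(c\eta_1\xi)\le \tr(\eta_0\xi)$ on $S_d^+$. Both parts of Corollary~\ref{cor:expmom}(ii) thus hold for the single vector $c\eta_1$.

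For the irreducibility and aperiodicity required in Corollary~\ref{cor:expmom}(i), I would invoke Proposition~\ref{prop: irreducible}. The auxiliary diffusion $\hat X$ obtained from $X$ by switching off the jumps is a classical Wishart process $(\delta,\beta,Q)$, and its transition probabilities admit, for every $t>0$ and $x\in S_d^+$, a Lebesgue density that is strictly positive on $S_d^{++}$; this follows from the series representation of the Wishart density in~\cite[Theorem~7.1]{mayerhofer2012} combined with the strict positivity of the zonal polynomial $C_{(1,\ldots,1)}=\det$ on $S_d^{++}$~\cite[Lemma~3.1]{mayerhofer2017}, and it is exactly the controllability hypothesis~(iii) that underwrites this fact, just as in the proof of Theorem~\ref{cor: wish}. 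Proposition~\ref{prop: irreducible} then transfers the Lebesgue-irreducibility and aperiodicity from $\hat X$ to $X$. With all three hypotheses of Corollary~\ref{cor:expmom} in hand, the conclusions (conservativeness, geometric ergodicity, positive Harris recurrence, and finite exponential moments of $\pi$) follow at once. The one subtle point is the adjoint-level application of Theorem~\ref{thm: linag}: the theorem produces an interior $v$ with $A^{\top}v\in -(K^{*})^{\circ}$ from a qmi map $A$, so in order to obtain $L\eta\in -S_d^{++}$ one must feed $L^{\top}$ into the theorem, not $L$; the rescaling step to merge the two \emph{a priori} distinct $\eta_0$'s from (i) and (ii) is the other place where minor care is needed.
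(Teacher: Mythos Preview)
Your proposal is correct and follows essentially the same route as the paper: establish Lebesgue-irreducibility and aperiodicity of $X$ via Proposition~\ref{prop: irreducible} (using the strictly positive Wishart transition density of $\hat X$ under the controllability condition~\ref{pointt2}), and then invoke Corollary~\ref{cor:expmom}. The paper's proof is terse at the point where condition~\ref{thm12cor} of Corollary~\ref{cor:expmom} is verified, and your explicit argument---showing $L$ is qmi, applying Theorem~\ref{thm: linag} to $L^{\top}$ on the self-dual cone, and rescaling to reconcile the vector from the drift condition with the vector from the moment bound---is exactly the intended filling-in of that step.
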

\begin{proof}
Due to Proposition \ref{prop: before jumps}, prior to its first jump, the Wishart proces with jumps $X$ cannot be distinguished from
a Wishart process $\hat X$, killed at the exponentially affine rate $e^{-\int_0^t {l+\tr (\Lambda X_t) }}$. Since $\hat X_t\mid \hat X_0=x$ admits a strict positive density, 
for each $t>0$ (cf.~proof of Theorem \ref{cor: wish}), the process $X$ is aperiodic and irreducible (Proposition \ref{prop: irreducible}). Hence the main assertion follows by an application Corollary \ref{cor:expmom}.
\end{proof}
Using Theorem \ref{prop:fldrift_trace} we similarly get:
\begin{theorem}
Let $X$ be a Wishart process with jumps and $m\geq 2$ a natural number. Assume that
\begin{enumerate}
\item \label{pointt1x} The eigenvalues of the linear map $L:S_d\rightarrow S_d$ defined by
\[
Lu=\beta u+u\beta^\top+\int_{S_d^+}\tr( \xi u) \mu(d\xi)
\]
have strictly negative real parts.
\item The jump measures have finite $m$--th moment, that is, there exists $x_0\in S_d^{++}$ such that
\begin{equation}\label{eq:ass_tracex}
\int_{S_d^+}\|\xi\|^m (m(d\xi)+\tr(x_0,\mu(d\xi)))<\infty.
\end{equation}
\item \label{pointt2x} The $d \times d^2$ matrix $Q^\top | \beta Q^\top | \ldots | \beta^{d-1} Q^\top$ has maximal rank (equals $d$). 
\end{enumerate}
Then $X$ is geometrically ergodic, positive Harris recurrent and the stationary distribution 
$\pi$ has finite $m$--th moment, that is,
$\int_{S_d^+} ( \|\xi\|^m+1)\pi(d\xi) < \infty$.
\end{theorem}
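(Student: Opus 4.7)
The plan is to verify the three hypotheses of Theorem \ref{prop:fldrift_trace} applied to the Wishart process with jumps on $K=S_d^+$, which is self-dual with interior $S_d^{++}$.

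For hypothesis (i), irreducibility and aperiodicity, I would argue exactly as in the proof of Theorem \ref{cor: wish with jumps}: Proposition \ref{prop: before jumps} asserts that until its first jump, $X$ is indistinguishable from the pure Wishart diffusion $\hat X$ killed at the exponentially affine rate governed by $l$ and $\Lambda$. By \cite[Theorem 7.1]{mayerhofer2012} together with \cite[Lemma 3.1]{mayerhofer2017}, the hypoellipticity condition (iii) on the matrix $Q^\top\,|\,\beta Q^\top\,|\,\cdots\,|\,\beta^{d-1}Q^\top$ ensures that $\hat X_t$ admits a Lebesgue density that is strictly positive on $S_d^{++}$ for every $t>0$ and every initial state. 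Proposition \ref{prop: irreducible} then yields $\lambda$-irreducibility and aperiodicity of $X$. Hypothesis (iii) of Theorem \ref{prop:fldrift_trace} (the finite $m$-th moment assumption on the jump measures) is assumption (ii) of the present theorem, verbatim.

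The core of the argument is the translation of condition (i) into hypothesis (ii) of Theorem \ref{prop:fldrift_trace}. Since the jump measures have finite first moment, we may take the truncation $\chi\equiv 0$. Because $B^\top u = \beta u + u\beta^\top$, the requirement becomes
\[
B^\top\eta_0 + \int_{S_d^+\setminus\{0\}} \tr(\eta_0\xi)\,\mu(d\xi) \;=\; L\eta_0 \;\in\; -S_d^{++}
\]
for some $\eta_0\in S_d^{++}$. To produce such an $\eta_0$ from the spectral assumption $\tau(L)<0$, I would apply Theorem \ref{thm: linag} to $L^\top$. For this I must check that $L^\top$ is qmi with respect to $K^*=S_d^+$, which by Remark \ref{rem both qmi} is equivalent to $L$ being qmi with respect to $S_d^+$. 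Decompose $L=B^\top+M$ with $Mu:=\int \tr(\xi u)\,\mu(d\xi)$: if $x,u\in S_d^+$ satisfy $\tr(xu)=0$, then $\sqrt{u}x\sqrt{u}=0$ and hence $xu=ux=0$, yielding $\tr(xB^\top u)=\tr(u(x\beta+\beta^\top x))=0$; moreover $M$ even maps $S_d^+$ into itself since $\tr(\xi u)\ge 0$ for $\xi,u\in S_d^+$ and $\mu$ is $S_d^+$-valued. Hence $L$ is qmi. Since $\tau(L^\top)=\tau(L)<0$, Theorem \ref{thm: linag} delivers $\eta_0\in(K^*)^\circ=S_d^{++}$ with $(L^\top)^\top\eta_0 = L\eta_0\in -S_d^{++}$, as required.

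With all three hypotheses verified, Theorem \ref{prop:fldrift_trace} directly gives conservativeness, geometric ergodicity, positive Harris recurrence, and $\int_{S_d^+}(\|\xi\|^m+1)\,\pi(d\xi)<\infty$. I expect the main subtlety to be the bookkeeping around adjoints: one must carefully distinguish $L$ from $L^\top$ and apply Theorem \ref{thm: linag} to the correct map so that the assertion about $A^\top v$ becomes a statement about $L\eta_0$. Once that is handled, the qmi check is essentially the same positivity computation that justifies the parametric restrictions in Theorem \ref{th: main theorem} for Wishart parameters, and the rest is a direct invocation of the general theory.
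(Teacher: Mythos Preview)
Your proposal is correct and matches the paper's approach: the paper gives no explicit proof, stating only ``Using Theorem \ref{prop:fldrift_trace} we similarly get'' (i.e., argue as for Theorem \ref{cor: wish with jumps} but invoke Theorem \ref{prop:fldrift_trace} instead of Corollary \ref{cor:expmom}). Your write-up simply makes explicit the steps the paper leaves to the reader, including the adjoint bookkeeping needed to pass from $\tau(L)<0$ to the existence of $\eta_0\in S_d^{++}$ with $L\eta_0\in -S_d^{++}$ via Theorem \ref{thm: linag}.
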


\begin{remark}\label{rem: app dim 1}
For dimension $d=1$, vanishing $\mu(d\xi)$, and $m(d\xi)=c d e^{-d\xi}d\xi$ (exponentially distributed jumps with mean $d$, intensity $c$)
we obtain the corresponding results for the ``Basic affine jump diffusion" (BAJD), \cite[Theorem 5.6 and Theorem 6.3]{jin2016positive}, given by the SDE
\[
dX_t=a(\theta-X_t)dt+\sigma \sqrt{X_t}dW_t+d\hat J_t,\quad X_0\geq 0,
\]
Here $\hat J$ the compensated jump process.

Note that in this case, $\sigma>0$ implies strictly positive transition densities (as for $J=0$ they are non-central chi-square distributed. Alternatively,
{\ref{pointt2}} of Theorem \ref{cor: wish with jumps} also implies strictly positive densities). Furthermore, since $J$ is compensated, the standard assumption $a>0$
implies Condition {\ref{pointt1}} of Theorem \ref{cor: wish with jumps}.

In this special one-dimensional case \cite{JinKremerRuediger2017} could also establish ergodicity under log moments and geometric ergodicity under the finiteness of $\kappa>0$ moments and for $J$ being an infinite activity subordinator, i.e. only having state-independent jumps.
\end{remark}
\subsection{Affine processes on $\mathbb R_+^m$}
In the following, the inner product $\langle x,y\rangle=x^\top y$ is the standard Euclidean scalar product on $V=\mathbb R^m$.

Affine processes with state space $K=\mathbb R_+^m$ are fully characterized by the paper \cite{duffie2003}. In this case, the linear drift map $B^\top:\mathbb R^m\rightarrow\mathbb R^m$ is a $m\times m$ matrix with non-negative off-diagonal elements, hence it is easy to compute the eigenvalues of $B$ to find a good test vector satisfying condition \ref{thm12} of
Theorem \ref{prop:FLdrift_expo}. A corollary is therefore the following ergodicity result:
\begin{theorem}
Let $X$ be an affine process on $\mathbb R_+^m$ with $c=0$, $\gamma=0$. Assume that 
\begin{enumerate}
\item \label{thm11x} $X$ is $\nu$-irreducible with the support of $\nu$ having non-empty interior and aperiodic,
\item \label{thm12x} The eigenvalues of the linear map $B^\top+\int_{\mathbb R_+^m\setminus\{0\}}\langle \xi-\chi(\xi)\rangle \mu(d\xi)$ have strictly negative real parts, and for sufficiently small $w>0$, 
\begin{align*}
&\int_{\{ \|x\|\ge 1\}} e^{ \|w\xi\|} (m(d\xi)+\langle\mu(d\xi),x\rangle < \infty.
\end{align*} 
\end{enumerate} 
Then $X$ is conservative, and there exists $\eta_0\in\mathbb R_{++}^m$ such that for all $\eta\preceq \eta_0$, any $x\in\mathbb R_+^m$ and all $t\geq 0$ we have $\mathbb E^x[e^{\langle \eta,X_t\rangle}]<\infty$.

Furthermore, $X$ is geometrically ergodic, positive Harris recurrent. and for any $\eta\preceq \eta_0$ the stationary distribution $\pi$ has finite $\eta$-exponential moment, i.e. $\int_{K\setminus \{0\}} e^{\langle\eta_0, \xi\rangle} \pi(d\xi) < \infty$.
\end{theorem}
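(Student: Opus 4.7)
The plan is to reduce this statement to a direct application of Corollary \ref{cor:expmom}. The key observation is that on $K = \mathbb R_+^m$ (which is self-dual with respect to the standard inner product), the integral perturbation of $B^\top$ appearing in assumption (ii) is nothing but the full adjoint linear drift $\tilde B^\top$ of the compensated SDE, and it is automatically qmi with respect to $K^*=\mathbb R_+^m$. So Theorem \ref{thm: linag} converts the spectral assumption into the vector drift condition \eqref{eq:conditionbeta2xx} required by Corollary \ref{cor:expmom}.

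Concretely, I would first define $\tilde B^\top v := B^\top v + \int_{K\setminus\{0\}} \langle v,\xi-\chi(\xi)\rangle\,\mu(d\xi)$, noting that the integral converges since the exponential moment condition in (ii) implies in particular a first moment of $\mu$ at infinity. I would then verify directly that $\tilde B^\top$ is qmi with respect to $K^*$: for $u\in K^*$ and $x\in K$ with $\langle u,x\rangle=0$, condition (vii) of Theorem \ref{th: main theorem} gives $\langle x,B^\top u\rangle \geq \int\langle \chi(\xi),u\rangle\langle x,\mu(d\xi)\rangle$, while $\int\langle u,\xi\rangle\langle x,\mu(d\xi)\rangle\geq 0$ since $\langle u,\xi\rangle\geq 0$ and $\langle x,\mu(d\xi)\rangle$ is a nonnegative measure on $K$. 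Summing yields $\langle x,\tilde B^\top u\rangle\geq 0$. By Remark \ref{rem both qmi}, $\tilde B$ is then qmi with respect to $K$.

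Next I would apply Theorem \ref{thm: linag} to $A=\tilde B$: since $\tau(\tilde B)=\tau(\tilde B^\top)<0$ by hypothesis (ii), there exists $\eta_0^*\in(K^*)^\circ = \mathbb R_{++}^m$ with $\tilde B^\top \eta_0^* \in -(K^*)^\circ$. The condition is preserved under positive scaling, so I can replace $\eta_0^*$ by $c\,\eta_0^*$ with $c>0$ as small as I please; in particular I pick $c$ small enough that $\|c\,\eta_0^*\|\leq w$, where $w$ is the constant furnished by (ii), and define $\eta_0:=c\,\eta_0^*$. Then for every $\xi\in\mathbb R_+^m$, Cauchy--Schwarz yields $\langle \eta_0,\xi\rangle \leq \|\eta_0\|\|\xi\|\leq \|w\xi\|$, whence
\[
\int_{\{\|\xi\|\geq 1\}} e^{\langle \eta_0,\xi\rangle}\bigl(m(d\xi)+\langle \mu(d\xi),x\rangle\bigr) \leq \int_{\{\|\xi\|\geq 1\}} e^{\|w\xi\|}\bigl(m(d\xi)+\langle \mu(d\xi),x\rangle\bigr) < \infty
\]
for every $x\in K$. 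This is precisely hypothesis (ii) of Corollary \ref{cor:expmom}, including the drift condition \eqref{eq:conditionbeta2xx} and the exponential-integrability bound \eqref{eq:ass_exponentialm1y}. Combined with the standing assumption $c=\gamma=0$ and the irreducibility/aperiodicity hypothesis (i), Corollary \ref{cor:expmom} applies verbatim, delivering conservativeness, finite exponential moments, geometric ergodicity, positive Harris recurrence and finite $\eta$-exponential moments of $\pi$ for all $\eta\preceq \eta_0$.

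There is really no hard step here: the only point that requires care is the book-keeping of qmi properties and the coordination of the two free parameters ($\eta_0$ from Theorem \ref{thm: linag} and $w$ from the assumption), which is handled cleanly by the scale-invariance of the drift condition together with Cauchy--Schwarz. The rest is a translation from the spectral hypothesis on matrices with nonnegative off-diagonal entries into the interior drift condition of the general cone framework.
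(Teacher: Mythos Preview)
Your proposal is correct and matches the paper's approach: the paper presents this theorem explicitly as a corollary of Corollary~\ref{cor:expmom}, noting only that on $\mathbb R_+^m$ the linear drift map has nonnegative off-diagonals so one can find a test vector satisfying condition~\ref{thm12} of Theorem~\ref{prop:FLdrift_expo}. Your write-up fills in exactly the details the paper leaves implicit---in particular the verification that the \emph{perturbed} drift $\tilde B^\top$ (not just $B^\top$) is qmi, and the scaling/Cauchy--Schwarz step linking the spectral vector $\eta_0^*$ to the exponential-moment parameter $w$---and these are the only points requiring care.
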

We refrain from stating the obvious analogue with finite $m$-th moments. For conditions ensuring irreducibility and aperiodicity we refer to Remarks \ref{rem:diffdens}, \ref{rem:purejumpirr}, Proposition \ref{prop:purejumpirr} and \cite{ZhangGlynn2018} who discuss the canonical state space at length under a strong non-degeneracy assumption on the diffusive part which is stronger than the controllability assumption. 
\subsection{Affine term-structure models.}
We consider state spaces $K\times\mathbb R^n$ in this section, where $K$ is an $m$-dimensional generating, proper closed convex cone. There is no established theory of affine processes on these spaces,
and also our theory developed in Chapter \ref{sec:affine_ge} does not fall into this setup, as $K\times\mathbb R^n$ is not a proper cone.

Due to \cite[Lemma 7.1]{filipovic2009affine} the instantaneous diffusion of processes on $\mathbb R_+^m\times\mathbb R^n$ can always be brought into block-diagonal form. We take this statement as motivation to make the following assumption throughout. Let $Z:=(X,Y)$ be an affine diffusion process
on $K\times \mathbb R^n$ such that for each $z:=(x,y)\in K\times \mathbb R_+^n$, $X$ is the unique weak solution to
\begin{align}
dX_t&=(b+B(X_t))dt+\Sigma(X_t)dW^X_t,\quad X_0\in K,\\
dY_t&=(c+C(X_t)+D(Y_t))dt+\sigma(X_t)dW^Y_t,\quad Y_0\in \mathbb R^n,
\end{align}
where $W^X$ and $W^Y$ are independent standard Brownian motions of dimensions $m,n$, respectively.
Furthermore $b\in K$, $c\in\mathbb R^n$, and $B$ is a linear map, qmi with respect to $K$. Moreover, $C:\mathbb R^m\rightarrow\mathbb R^n$ and $D:K \rightarrow\mathbb R^n$ are linear maps. The affine structure implies
that $\langle u,\Sigma (x)\Sigma^\top (x)u\rangle=\langle Q(u,u),x\rangle$ as in Theorem \ref{th: main theorem} \ref{eq:lin_diffusion1}, while $\sigma\sigma^\top(x)=E+F x$,
where $E\in S_n^+$ and $F: K\rightarrow S_n^+$ is linear.

For the associated generalized Riccati differential equations, the origin in $ K\times \mathbb{R}^{n}$ is an asymptotically stable equilibrium, if the eigenvalues of the drift matrix
\[
\Xi:=\left(\begin{array}{ll} B & 0\\ C&D\end{array}\right)
\]
(or, equivalently, that of $\Xi^\top$) have strictly negative parts. This can be seen by linearizing the associated generalized Riccati differential equations for $\psi(t,u,v)=(\psi^X,\psi^Y)$ which are given by
\[
\partial_t\psi^X=B^\top \psi^X+C^\top \psi^Y+ \frac{1}{2}Q(\psi^X,\psi^X)+(\psi^Y)^\top F \psi^Y,
\]
whereas
\[
\partial_t \psi^Y=D^\top \psi^Y,
\]
subject to $\psi(t=0,u,v)=(u,v)$.
\begin{theorem}\label{megageil}
Suppose $Z$ satisfies the following:
\begin{enumerate}
\item \label{part1 term} $Z$ is $\nu$-irreducible with the support of $\nu$ having non-empty interior and aperiodic,
\item \label{part2 term} $\tau(\Xi)<0$ (or, equivalently, $\tau(B)<0$ and $\tau(D)<0$).
\end{enumerate}
Then $Z$ is geometrically ergodic and positive Harris recurrent. Furthermore, the stationary distribution $\pi$ has a finite second moment, i.e. $\int_{K\times\mathbb R^n} \|\xi\|^2 \pi(d\xi) < \infty$.
\end{theorem}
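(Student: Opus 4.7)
The plan is to build a quadratic Lyapunov function on $K\times\mathbb R^n$ and apply Theorem \ref{dmtergodicity}. Because $\Xi$ is block lower triangular with diagonal blocks $B$ and $D$, $\sigma(\Xi)=\sigma(B)\cup\sigma(D)$, so the hypothesis $\tau(\Xi)<0$ is equivalent to $\tau(B)<0$ and $\tau(D)<0$. The linear map $B$ is qmi with respect to $K$ (the parametric restriction ensuring the invariance of $K$ for the $X$-component), hence Theorem \ref{thm: linag} supplies $\eta\in(K^*)^{\circ}$ with $B^\top\eta\in -(K^*)^{\circ}$. Combined with $\eta\in(K^*)^\circ$, this yields $\lambda_1,\mu_1>0$ such that $\langle B^\top\eta,x\rangle\leq -\lambda_1\|x\|$ and $\langle\eta,x\rangle\geq\mu_1\|x\|$ for every $x\in K$. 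Since $\tau(D)<0$, classical Lyapunov theory produces a positive-definite symmetric $n\times n$ matrix $P$ with $D^\top P+PD=-I_n$. I then propose the test function
\[
V(x,y)\ :=\ a\langle\eta,x\rangle^{2}+y^{\top}Py+M,
\]
where $a,M>0$ are to be calibrated; $V$ is smooth, $\geq 1$ once $M\geq 1$, and of quadratic growth.

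Next I compute $\mathcal AV$ and bound it. Membership of $V$ in the domain of the extended generator follows, via Ito's formula, from uniform control of quadratic moments of $Z$ on bounded time intervals; this is standard for the $X$-component by Theorem \ref{thm: cons and pol} specialised to $m=2$ with vanishing jump measures, and for the $Y$-component from the linear-drift, linear-dispersion nature of its SDE. Explicitly,
\begin{align*}
\mathcal A V(x,y) &= 2a\langle\eta,x\rangle\langle\eta,b\rangle+2a\langle\eta,x\rangle\langle B^\top\eta,x\rangle+a\langle Q(\eta,\eta),x\rangle \\
&\quad + 2y^{\top}P\bigl(c+C(x)+D(y)\bigr)+\tr\bigl((E+Fx)P\bigr).
\end{align*}
The decisive negative contributions are $2a\langle\eta,x\rangle\langle B^\top\eta,x\rangle\leq -2a\lambda_1\mu_1\|x\|^2$ and $y^{\top}(D^\top P+PD)y=-\|y\|^2$. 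The $y$-$x$ cross term $2y^{\top}PC(x)$ is split by Young's inequality as $\tfrac12\|y\|^2+2\|PC(x)\|^2\leq \tfrac12\|y\|^2+C_1\|x\|^2$, while the remaining pieces $2a\langle\eta,x\rangle\langle\eta,b\rangle$, $a\langle Q(\eta,\eta),x\rangle$, $\tr(FxP)$, $2y^{\top}Pc$ and $\tr(EP)$ are of order $\|x\|$, $\|y\|$ or constant, and are absorbed into $\tfrac14\|x\|^2+\tfrac14\|y\|^2+\mathrm{const}$ by further Young steps. Choosing $a$ large enough that $2a\lambda_1\mu_1-C_1-\tfrac14>0$ delivers
\[
\mathcal A V(x,y)\ \leq\ -\alpha\bigl(\|x\|^2+\|y\|^2\bigr)+\beta
\]
for some $\alpha,\beta>0$. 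Since $V(x,y)$ is itself bounded above by a multiple of $\|x\|^2+\|y\|^2+1$, this rearranges into the Foster--Lyapunov drift inequality $\mathcal A V\leq -cV+c_2\mathbf{1}_{\mathcal C}$ on the complement of a sufficiently large closed ball $\mathcal C\subset K\times\mathbb R^n$.

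Finally, Assumption \ref{part1 term} supplies irreducibility with interior-supported measure and aperiodicity, while $Z$ is Feller (as an affine diffusion) and hence Borel right, so Proposition \ref{prop:compactpetite} guarantees that $\mathcal C$ is petite. Theorem \ref{dmtergodicity} then delivers $V$-uniform, and in particular geometric, ergodicity. Positive Harris recurrence and the second-moment bound $\int\|\xi\|^2\pi(d\xi)<\infty$ follow by the discrete-skeleton argument employed at the end of the proof of Theorem \ref{prop:FLdrift_expo}, since $V$ dominates $\|(x,y)\|^2$. I expect the main obstacle to lie in the balancing of Step~2: the Young inequalities must tame the cross term $y^{\top}PC(x)$ without destroying either the $x$- or the $y$-penalty, and the freedom to enlarge $a$ (in conjunction with the strict positivity of $\lambda_1\mu_1$ granted by Theorem \ref{thm: linag}) is precisely what makes $\tau(B)<0$ and $\tau(D)<0$ jointly sufficient.
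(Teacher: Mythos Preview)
Your proposal is correct and follows essentially the same route as the paper: build a quadratic Lyapunov function of the shape $(\text{weight})\cdot\langle\eta,x\rangle^{2}+\langle Py,y\rangle+\text{const}$, verify the Foster--Lyapunov drift inequality, and invoke Theorem~\ref{dmtergodicity} together with Proposition~\ref{prop:compactpetite}. The only tactical differences are that the paper picks $\eta_0$ as the Krein--Rutman eigenvector of $B^\top$ (via Lemma~\ref{lem eval qmi}) and handles the cross term $\langle C(x),Qy\rangle$ by \emph{shrinking} the $y$-weight $\vartheta_0$, whereas you obtain $\eta$ from Theorem~\ref{thm: linag} and \emph{enlarge} the $x$-weight $a$; these are dual manoeuvres yielding the same inequality. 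Your choice is arguably cleaner, since Theorem~\ref{thm: linag} hands you $\eta\in(K^*)^\circ$ directly, which is exactly what the lower bound $\langle\eta,x\rangle\geq\mu_1\|x\|$ needs.
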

\begin{remark}\label{megasuper}
\begin{enumerate}
\item The inclusion of finite activity and state-dependent jump behavior is possible (similarly to the main results of this paper).
\item For ``canonical state spaces" $\mathbb R_+^m\times\mathbb R^n$, sufficient conditions for irreducibility and aperiodicity (that is {\ref{part1 term}} of Theorem \ref{megageil}) can be can be found in \cite{ZhangGlynn2018}. 
\item The classical term-structure literature (for $K=\mathbb R_+^m$, see \cite[Section 2, (C1)--(C4)]{glasserman2010moment}) uses stronger conditions on drift and diffusion coefficients of $Z$. In particular, the linear
drift coefficients $D$ and $B$ must have real eigenvalues due to econometric identification issues.
\item \label{nacka bazzi} The ergodicity result of \cite[Theorem 4.1]{barczy2014stationarity} is the special case, where $K=\mathbb R_+$, $C=0$ and the constant diffusion matrix $E$ vanishes. They however
prove directly Assumption {\ref{part1 term}}. The condition {\ref{part2 term}} clearly is just implied by strictly negative linear drift coefficients.
\end{enumerate}
\end{remark}
\begin{proof}
Since $\tau(D)<0$, due to \cite[Theorem 2.2.1]{Hornetal1991} there exists $Q\in S_n^{++}$ such that $D^\top Q+Q D \in-S_n^{++}$ (and conversely, this implies $\tau(D)<0$.) 

Since $B$ is qmi, there exists $\eta_0\in K\setminus\{0\}$, such that $B^\top \eta_0=\tau(B)\eta_0$ (see Lemma \ref{lem eval qmi}). We may without loss of generality assume
$\inf_{x\in K, \|x\|=1}\langle \eta_0,x\rangle=1$.

Let $\vartheta_0$ be a positive constant satisfying
\begin{equation}\label{eq: theta bound}
\vartheta_0<\min\left(\vert\tau(D^\top Q+Q D)\vert^3,\frac{(2\vert\tau(B)\vert)^{3/2}}{\|C\|^3\|D\|^3}\right).
\end{equation}
We introduce the quadratic test function $f: K\times \mathbb R_+^n\rightarrow [1,\infty)$ by setting
\[
f(x,y):=1+\langle\eta_0,x\rangle^2+\vartheta_0 \langle y, Qy\rangle ,\quad x\in K,\quad y\in \mathbb R^n
\]
and obtain for any $(x,y)\in K\times\mathbb R_+^n$,
\begin{align*}
&\langle B(x) , f_x\rangle+\langle C(x)+D(y),f_y\rangle\\&\qquad=2\langle x, B^\top \eta_0\rangle\langle \eta_0,x\rangle+2\vartheta_0\langle C(x),Qy\rangle+ \vartheta_0\langle(D^\top Q+QD)y, y\rangle\\ 
&\qquad\leq -2\vert\tau(B)\vert\langle \eta_0,x\rangle^2-\vartheta_0\vert\tau(D^\top Q+Q D)\vert \|y\|^2+ 2\vartheta_0 \|C\|\|Q\|\|x\|\|y\|\\
&\qquad\leq -2\vert\tau(B)\vert\langle \eta_0,x\rangle^2-\vartheta_0\vert\tau(D^\top Q+Q D)\vert \|y\|^2+ 2\vartheta_0 \|C\|\|Q\|\langle \eta_0,x\rangle\|y\|\\
&\qquad\leq -2\vert\tau(B)\vert\langle \eta_0,x\rangle^2-\vartheta_0\vert\tau(D^\top Q+Q D)| \|y\|^2 +2 \vartheta_0^{1/3}\|C\| \|Q\|\langle \eta_0,x\rangle \vartheta_0^{2/3}\|y\|\\
&\qquad\leq -(2\vert\tau(B)\vert-\vartheta_0^{2/3}\|C\|^2\|Q\|^2)\langle \eta_0,x\rangle^2-\vartheta_0(\vert\tau(D^\top Q+QD)\vert-\vartheta_0^{1/3})\|y\|^2\\
&\qquad\leq -\varepsilon (f(x,y)-1)
\end{align*}
for some $\varepsilon>0$, where in the last but one estimate we have used the fact that $\inf_{\|x\|=1}\langle \eta_0,x\rangle=1$ and in the last one
we have used \eqref{eq: theta bound} and the fact that $\langle Qy,y\rangle$ is non-degenerate, therefore bounded from below by a positive multiple of $\|y\|^2$.

The process $Z$ is an affine diffusion, and thus $m$--polynomial for any $m\geq 2$, see \cite{cuchiero2012polynomial}. Hence $f$ lies in the domain
of the extended generator.

Since the infinitesimal generator $\mathcal A$ of $(X,Y)$ is a second order differential operator with affine drift and affine diffusion coefficients, its formal application
yields
\[
\mathcal Af(x,y)\leq -\varepsilon( f(x,y)-1)+O(\|x\|+\|y\|)
\]
and since $(f-1)^{1/2}$ is norm-like, we can find a large enough ball $\mathcal C=\{(x,y)\mid \|x\|^2+\|y\|^2\leq R^2\}$ such that
\[
\mathcal Af(x,y)\leq -\frac\varepsilon2 f(x,y)+d 1_{\mathcal C}(x,y),\quad x\in K,\quad y\in \mathbb R^n.
\]
We conclude that $(\mathcal A,f)$ satisfies the Lyapunov -Foster drift condition.

The rest of the proof follows the lines of the proof of Theorem \ref{prop:FLdrift_expo}.
\end{proof}
\theendnotes
\bibliographystyle{abbrvnat}

\begin{thebibliography}{85}
\providecommand{\natexlab}[1]{#1}
\providecommand{\url}[1]{\texttt{#1}}
\expandafter\ifx\csname urlstyle\endcsname\relax
  \providecommand{\doi}[1]{doi: #1}\else
  \providecommand{\doi}{doi: \begingroup \urlstyle{rm}\Url}\fi

\bibitem[Alfonsi(2015)]{Alfonsi2015}
A.~Alfonsi.
\newblock \emph{Affine diffusions and related processes: simulation, theory and
  applications}, volume~6 of \emph{Bocconi \& Springer Series}.
\newblock Springer, Cham; Bocconi University Press, Milan, 2015.

\bibitem[Alfonsi et~al.(2016)Alfonsi, Kebaier, and Rey]{AlfonsiKebaierRey2017}
A.~Alfonsi, A.~Kebaier, and C.~Rey.
\newblock Maximum likelihood estimation for {W}ishart processes.
\newblock \emph{Stochastic Processes and their Applications}, 126.\penalty0
  (11):\penalty0 3243--3282, 2016.

\bibitem[Baldeaux and Platen(2013)]{BaldeauxPlaten2013}
J.~Baldeaux and E.~Platen.
\newblock \emph{Functionals of multidimensional diffusions with applications to
  finance}, volume~5 of \emph{Bocconi \& Springer Series}.
\newblock Springer, Cham; Bocconi University Press, Milan, 2013.

\bibitem[Barczy et~al.(2014)Barczy, D{\"o}ring, Li, and
  Pap]{barczy2014stationarity}
M.~Barczy, L.~D{\"o}ring, Z.~Li, and G.~Pap.
\newblock Stationarity and ergodicity for an affine two-factor model.
\newblock \emph{Advances in Applied Probability}, 46\penalty0 (3):\penalty0
  878--898, 2014.

\bibitem[Barndorff-Nielsen and Shephard(2001)]{bns2001}
O.~E. Barndorff-Nielsen and N.~Shephard.
\newblock Non-{G}aussian {O}rnstein-{U}hlenbeck-based models and some of their
  uses in financial economics.
\newblock \emph{Journal of the Royal Statistical Society Series B Statiscal
  Methodology}, 63\penalty0 (2):\penalty0 167--241, 2001.

\bibitem[Barndorff-Nielsen and Stelzer(2007)]{barndorffstelzer2007}
O.~E. Barndorff-Nielsen and R.~Stelzer.
\newblock Positive-definite matrix processes of finite variation.
\newblock \emph{Probability and Mathematical Statistics}, 27\penalty0
  (1):\penalty0 3--43, 2007.

\bibitem[Benth and Vos(2013)]{BenthVos2013}
F.~E. Benth and L.~Vos.
\newblock Pricing of forwards and options in a multivariate non-{G}aussian
  stochastic volatility model for energy markets.
\newblock \emph{Advances in Applied Probability}, 45\penalty0 (2):\penalty0
  572--594, 2013.

\bibitem[Biagini et~al.(2018)Biagini, Gnoatto, and H{\"a}rtel]{Biagini2018}
F.~Biagini, A.~Gnoatto, and M.~H{\"a}rtel.
\newblock Long-term yield in an affine {HJM} framework on {$S_d^+$}.
\newblock \emph{Applied Mathematics {\&} Optimization}, 77\penalty0
  (3):\penalty0 405--441, 2018.

\bibitem[Boussama et~al.(2011)Boussama, Fuchs, and Stelzer]{fuchs}
F.~Boussama, F.~Fuchs, and R.~Stelzer.
\newblock Stationarity and geometric ergodicity of {BEKK} multivariate {GARCH}
  models.
\newblock \emph{Stochastic Processes and Their Applications}, 121\penalty0
  (10):\penalty0 2331--2360, 2011.

\bibitem[Branger et~al.(2017)Branger, Muck, Seifried, and
  Weisheit]{Brangeretal2018}
N.~Branger, M.~Muck, F.~T. Seifried, and S.~Weisheit.
\newblock Optimal portfolios when variances and covariances can jump.
\newblock \emph{Journal of Economic Dynamics \& Control}, 85:\penalty0 59--89,
  2017.

\bibitem[Brockwell and Schlemm(2013)]{BrockwellSchlemm2011}
P.~J. Brockwell and E.~Schlemm.
\newblock Parametric estimation of the driving {L\'evy} process of multivariate
  {CARMA} processes from discrete observations.
\newblock \emph{Journal of Multivariate Analysis}, 115:\penalty0 217--251,
  2013.

\bibitem[Bru(1991)]{bru}
M.-F. Bru.
\newblock {Wishart Processes}.
\newblock \emph{Journal of Theoretical Probability}, 4:\penalty0 725--751,
  1991.

\bibitem[Chiarella et~al.(2014)Chiarella, Da~Fonseca, and
  Grasselli]{ChiarellaFonsecaGrasselli2014}
C.~Chiarella, J.~Da~Fonseca, and M.~Grasselli.
\newblock Pricing range notes within {W}ishart affine models.
\newblock \emph{Insurance: Mathematics \& Economics}, 58:\penalty0 193--203,
  2014.

\bibitem[Cox et~al.(1985)Cox, Ingersoll, and Ross]{CIR1985}
J.~C. Cox, J.~E. Ingersoll, Jr., and S.~A. Ross.
\newblock A theory of the term structure of interest rates.
\newblock \emph{Econometrica}, 53\penalty0 (2):\penalty0 385--407, 1985.

\bibitem[Cuchiero and Teichmann(2013)]{cuchieroteichmann2013}
C.~Cuchiero and J.~Teichmann.
\newblock Path properties and regularity of affine processes on general state
  spaces.
\newblock In \emph{S\'eminaire de {P}robabilit\'es {XLV}}, volume 2078 of
  \emph{Lecture Notes in Math.}, pages 201--244. Springer, Cham, 2013.

\bibitem[Cuchiero et~al.(2011)Cuchiero, Filipovi{\'c}, Mayerhofer, and
  Teichmann]{cuchiero}
C.~Cuchiero, D.~Filipovi{\'c}, E.~Mayerhofer, and J.~Teichmann.
\newblock Affine processes on positive semidefinite matrices.
\newblock \emph{The {A}nnals of {A}pplied {P}robability}, 21\penalty0
  (2):\penalty0 397--463, 2011.

\bibitem[Cuchiero et~al.(2012)Cuchiero, Keller-Ressel, and
  Teichmann]{cuchiero2012polynomial}
C.~Cuchiero, M.~Keller-Ressel, and J.~Teichmann.
\newblock Polynomial processes and their applications to mathematical finance.
\newblock \emph{Finance and {S}tochastics}, 16\penalty0 (4):\penalty0 711--740,
  2012.

\bibitem[Cuchiero et~al.(2016)Cuchiero, Keller-Ressel, Mayerhofer, and
  Teichmann]{cuchiero2016affine}
C.~Cuchiero, M.~Keller-Ressel, E.~Mayerhofer, and J.~Teichmann.
\newblock Affine processes on symmetric cones.
\newblock \emph{Journal of {T}heoretical {P}robability}, 29\penalty0
  (2):\penalty0 359--422, 2016.

\bibitem[da~Fonseca and Grasselli(2011)]{daFonsecaGrasselli2011}
J.~da~Fonseca and M.~Grasselli.
\newblock Riding on the smiles.
\newblock \emph{Quantitative Finance}, 11:\penalty0 1609--1632, 2011.

\bibitem[Dai and Singleton(2000)]{dai2000}
Q.~Dai and K.~J. Singleton.
\newblock Specification analysis of affine term structure models.
\newblock \emph{The Journal of Finance}, 55\penalty0 (5):\penalty0 1943--1978,
  2000.

\bibitem[Dieudonn{\'e}(1969)]{dieudonne1969foundations}
J.~Dieudonn{\'e}.
\newblock \emph{Foundations of Modern Analysis, enlarged and corrected
  printing}.
\newblock Academic Press, 1969.

\bibitem[Donati-Martin(2008)]{donati2008}
C.~Donati-Martin.
\newblock Large deviations for {W}ishart processes.
\newblock \emph{Probability and Mathematical Statistics}, 28\penalty0
  (2):\penalty0 325--343, 2008.

\bibitem[Donati-Martin et~al.(2004)Donati-Martin, Doumerc, Matsumoto, and
  Yor]{donatietal2004}
C.~Donati-Martin, Y.~Doumerc, H.~Matsumoto, and M.~Yor.
\newblock Some properties of the {W}ishart processes and a matrix extension of
  the {H}artman-{W}atson laws.
\newblock \emph{Kyoto University. Research Institute for Mathematical Sciences.
  Publications}, 40\penalty0 (4):\penalty0 1385--1412, 2004.

\bibitem[Doukhan(1994)]{doukhan}
P.~Doukhan.
\newblock \emph{Mixing}, volume~85 of \emph{Lecture Notes in Statistics}.
\newblock Springer-Verlag, New York, 1994.
\newblock ISBN 0-387-94214-9.

\bibitem[Down et~al.(1995)Down, Meyn, and Tweedie]{DMT1995}
D.~Down, S.~P. Meyn, and R.~L. Tweedie.
\newblock Exponential and uniform ergodicity of {M}arkov processes.
\newblock \emph{The Annals of Probability}, 23\penalty0 (4):\penalty0
  1671--1691, 1995.

\bibitem[Duffie and Garleanu(2001)]{duffie2001risk}
D.~Duffie and N.~Garleanu.
\newblock Risk and valuation of collateralized debt obligations.
\newblock \emph{Financial Analysts Journal}, 57\penalty0 (1):\penalty0 41--59,
  2001.

\bibitem[Duffie et~al.(2000)Duffie, Pan, and Singleton]{duffieetal2000}
D.~Duffie, J.~Pan, and K.~Singleton.
\newblock Transform analysis and asset pricing for affine jump-diffusions.
\newblock \emph{Econometrica}, 68\penalty0 (6):\penalty0 1343--1376, 2000.

\bibitem[Duffie et~al.(2003)Duffie, Filipovi\'c, and Schachermayer]{duffie2003}
D.~Duffie, D.~Filipovi\'c, and W.~Schachermayer.
\newblock Affine processes and applications in finance.
\newblock \emph{The Annals of Probability}, 13\penalty0 (3):\penalty0
  984--1053, 2003.

\bibitem[Elsner(1974)]{elsner1974quasimonotonie}
L.~Elsner.
\newblock Quasimonotonie und {U}ngleichungen in halbgeordneten {R}{\"a}umen.
\newblock \emph{Linear Algebra and its Applications}, 8\penalty0 (3), 1974.

\bibitem[Ethier and Kurtz(2009)]{ethier2009markov}
S.~N. Ethier and T.~G. Kurtz.
\newblock \emph{Markov processes: characterization and convergence}, volume
  282.
\newblock John Wiley \& Sons, 2009.

\bibitem[Fan(1958)]{fan1958topological}
K.~Fan.
\newblock Topological proofs for certain theorems on matrices with non-negative
  elements.
\newblock \emph{Monatshefte f{\"u}r {M}athematik}, 62\penalty0 (3):\penalty0
  219--237, 1958.

\bibitem[Faraut(1994)]{faraut1994analysis}
J.~Faraut.
\newblock \emph{Analysis on symmetric cones}.
\newblock Clarendon Press, Oxford, 1994.

\bibitem[Filipovi\'c(2005)]{filipovic2005}
D.~Filipovi\'c.
\newblock Time-inhomogeneous affine processes.
\newblock \emph{Stochastic Processes and Their Applications}, 115\penalty0
  (4):\penalty0 639--659, 2005.

\bibitem[Filipovi\'{c} and Mayerhofer(2009)]{filipovic2009affine}
D.~Filipovi\'{c} and E.~Mayerhofer.
\newblock Affine diffusion processes: theory and applications.
\newblock In \emph{Advanced financial modelling}, volume~8 of \emph{Radon Ser.
  Comput. Appl. Math.}, pages 125--164. Walter de Gruyter, Berlin, 2009.

\bibitem[Galtchouk and Pergamenshchikov(2007)]{GaltchoukPergamenshchikov2007}
L.~Galtchouk and S.~Pergamenshchikov.
\newblock Uniform concentration inequality for ergodic diffusion processes.
\newblock \emph{Stochastic Processes and their Applications}, 117:\penalty0
  830--839, 2007.

\bibitem[Galtchouk and Pergamenshchikov(2014)]{GaltchoukPergamenshchikov2014}
L.~Galtchouk and S.~Pergamenshchikov.
\newblock Geometric ergodicity for classes of homogeneous {M}arkov chains.
\newblock \emph{Stochastic Processes and their Applications}, 124:\penalty0
  3362--3391, 2014.

\bibitem[Galtchouk and Pergamenshchikov(2011)]{GaltchoukPergamenshchikov20011}
L.~I. Galtchouk and S.~M. Pergamenshchikov.
\newblock Adaptive sequential estimation for ergodic diffusion processes in
  quadratic metric.
\newblock \emph{Journal of Nonparametric Statistics}, 23:\penalty0 255--285,
  2011.

\bibitem[Galtchouk and Pergamenshchikov(2015)]{GaltchoukPergamenshcikov2015}
L.~I. Galtchouk and S.~M. Pergamenshchikov.
\newblock Efficient pointwise estimation based on discrete data in ergodic
  nonparametric diffusions.
\newblock \emph{Bernoulli}, 2:\penalty0 2569--2594, 2015.

\bibitem[Genon-Catalot et~al.(2000)Genon-Catalot, Jeantheau, and
  Lar\'edo]{genoncatalotjeantheaularedo00}
V.~Genon-Catalot, T.~Jeantheau, and C.~Lar\'edo.
\newblock Stochastic volatility as hidden markov models and statistical
  applications.
\newblock \emph{Bernoulli}, 6:\penalty0 1051--1079, 2000.

\bibitem[Glasserman and Kim(2010)]{glasserman2010moment}
P.~Glasserman and K.-K. Kim.
\newblock Moment explosions and stationary distributions in affine diffusion
  models.
\newblock \emph{Mathematical Finance}, 20\penalty0 (1):\penalty0 1--33, 2010.

\bibitem[Gnoatto and Grasselli(2014)]{GnoattoGrasselli2014}
A.~Gnoatto and M.~Grasselli.
\newblock An affine multicurrency model with stochastic volatility and
  stochastic interest rates.
\newblock \emph{SIAM Journal on Financial Mathematics}, 5\penalty0
  (1):\penalty0 493--531, 2014.

\bibitem[Gourieroux and Sufana(2010)]{GourierouxSufana2010}
C.~Gourieroux and R.~Sufana.
\newblock Derivative pricing with {W}ishart multivariate stochastic volatility.
\newblock \emph{Journal of Business \& Economic Statistics}, 28\penalty0
  (3):\penalty0 438--451, 2010.

\bibitem[Graczyk and Vostrikova(2007)]{graczyk2006}
P.~Graczyk and L.~Vostrikova.
\newblock The moments of {W}ishart processes via {I}t\^o calculus.
\newblock \emph{Theory of Probability and its Applications}, 51\penalty0
  (4):\penalty0 609 -- 525, 2007.

\bibitem[Granelli and Veraart(2016)]{GranelliVeraart2016}
A.~Granelli and A.~E.~D. Veraart.
\newblock Modeling the variance risk premium of equity indices: the role of
  dependence and contagion.
\newblock \emph{SIAM Journal on Financial Mathematics}, 7\penalty0
  (1):\penalty0 382--417, 2016.

\bibitem[Grasselli and Miglietta(2016)]{GrasselliMiglietta2016}
M.~Grasselli and G.~Miglietta.
\newblock A flexible spot multiple-curve model.
\newblock \emph{Quantitative Finance}, 16\penalty0 (10):\penalty0 1465--1477,
  2016.

\bibitem[Hall(2005)]{Hall2005}
A.~R. Hall.
\newblock \emph{Generalized method of moments}.
\newblock Advanced Texts in Econometrics. Oxford University Press, Oxford,
  2005.

\bibitem[Hansen(1982)]{hansen82}
L.~P. Hansen.
\newblock Large sample properties of generalized method of moments estimators.
\newblock \emph{Econometrica}, 50:\penalty0 1029--1054, 1982.

\bibitem[Haug et~al.(2007)Haug, Kl\"uppelberg, Lindner, and Zapp]{Haugetal2007}
S.~Haug, C.~Kl\"uppelberg, A.~Lindner, and M.~Zapp.
\newblock Method of moment estimation in the {COGARCH}(1,1) model.
\newblock \emph{The Econometrics Journal}, 10:\penalty0 320--341, 2007.

\bibitem[Heston(1993)]{heston1993closed}
S.~L. Heston.
\newblock A closed-form solution for options with stochastic volatility with
  applications to bond and currency options.
\newblock \emph{The Review of Financial Studies}, 6\penalty0 (2):\penalty0
  327--343, 1993.

\bibitem[Horn and Johnson(1991)]{Hornetal1991}
R.~A. Horn and C.~R. Johnson.
\newblock \emph{Topics in Matrix Analysis}.
\newblock Cambridge University Press, Cambridge, 1991.

\bibitem[Jacod and Shiryaev(2003)]{Jacodetal2003}
J.~Jacod and A.~N. Shiryaev.
\newblock \emph{Limit Theorems for Stochastic Processes}, volume 288 of
  \emph{Grundlehren der Mathematischen Wissenschaften}.
\newblock Springer, Berlin, 2nd edition, 2003.

\bibitem[Jin et~al.(2016)Jin, R{\"u}diger, and Trabelsi]{jin2016positive}
P.~Jin, B.~R{\"u}diger, and C.~Trabelsi.
\newblock Positive {Harris} recurrence and exponential ergodicity of the basic
  affine jump-diffusion.
\newblock \emph{Stochastic {A}nalysis and {A}pplications}, 34\penalty0
  (1):\penalty0 75--95, 2016.

\bibitem[Jin et~al.(2019)Jin, Kremer, and R{\"u}diger]{JinKremerRuediger2017}
P.~Jin, J.~Kremer, and B.~R{\"u}diger.
\newblock Moments and ergodicity of the jump-diffusion {CIR} process.
\newblock \emph{Stochastics}, 2019.
\newblock to appear, DOI: 10.1080/17442508.2019.1576686.

\bibitem[Keller-Ressel and Mayerhofer(2015)]{KRM2015}
M.~Keller-Ressel and E.~Mayerhofer.
\newblock Exponential moments of affine processes.
\newblock \emph{The Annals of Applied Probability}, 25\penalty0 (2):\penalty0
  714--752, 2015.

\bibitem[Keller-Ressel et~al.(2013)Keller-Ressel, Schachermayer, and
  Teichmann]{kellerresseletal2013}
M.~Keller-Ressel, W.~Schachermayer, and J.~Teichmann.
\newblock Regularity of affine processes on general state spaces.
\newblock \emph{Electronic Journal of Probability}, 18:\penalty0 no. 43, 17,
  2013.

\bibitem[Klenke(2014)]{Klenke2014}
A.~Klenke.
\newblock \emph{Probability theory}.
\newblock Universitext. Springer, London, second edition, 2014.

\bibitem[Krein and Rutman(1948)]{krein1948linear}
M.~G. Krein and M.~A. Rutman.
\newblock Linear operators leaving invariant a cone in a {Banach} space.
\newblock \emph{Uspekhi Matematicheskikh Nauk}, 3\penalty0 (1):\penalty0 3--95,
  1948.

\bibitem[Lang(1986)]{Lang1986}
R.~Lang.
\newblock A note on the measurability of convex sets.
\newblock \emph{Archiv der Mathematik (Basel)}, 47\penalty0 (1):\penalty0
  90--92, 1986.

\bibitem[Leippold and Trojani(2010)]{LeippoldTrojani2008}
M.~Leippold and F.~Trojani.
\newblock {Asset Pricing with Matrix Jump Diffusions}.
\newblock Working paper, 2010.
\newblock available from:
  \url{http://papers.ssrn.com/sol3/papers.cfm?abstract_id=1572576}.

\bibitem[Lyapunov(1992)]{lyapunov1992general}
A.~M. Lyapunov.
\newblock The general problem of the stability of motion.
\newblock \emph{International Journal of Control}, 55\penalty0 (3):\penalty0
  531--772, 1992.

\bibitem[Mahoney(2016)]{Mahoney2016}
D.~Mahoney.
\newblock \emph{Modeling and Valuation of Energy Structures: Analytics,
  Econometrics, and Numerics}.
\newblock Applied Quantitative Finance. Palgrave Macmillan, New York, 2016.

\bibitem[Marcus and Rosen(2006)]{marcusrosen}
M.~B. Marcus and J.~Rosen.
\newblock \emph{Markov processes, {G}aussian processes, and local times},
  volume 100 of \emph{Cambridge Studies in Advanced Mathematics}.
\newblock Cambridge University Press, Cambridge, 2006.

\bibitem[Masuda(2004)]{masuda2004}
H.~Masuda.
\newblock On multidimensional {O}rnstein-{U}hlenbeck processes driven by a
  general {L}\'evy process.
\newblock \emph{Bernoulli}, 10\penalty0 (1):\penalty0 97--120, 2004.

\bibitem[M\'{a}ty\'{a}s(1999)]{Matyas1999}
L.~M\'{a}ty\'{a}s, editor.
\newblock \emph{Generalized method of moments estimation}.
\newblock Cambridge University Press, Cambridge, 1999.

\bibitem[Mayerhofer(2012)]{mayerhofer2012b}
E.~Mayerhofer.
\newblock Affine processes on positive semidefinite {$d\times d$} matrices have
  jumps of finite variation in dimension {$d>1$}.
\newblock \emph{Stochastic Processes and Their Applications}, 122\penalty0
  (10):\penalty0 3445--3459, 2012.

\bibitem[Mayerhofer(2014)]{mayerhofer2012}
E.~Mayerhofer.
\newblock Wishart processes and wishart distributions: an affine processes
  point of view.
\newblock In \emph{Modern methods in multivariate statistics, Travaux en
  Cours}. Hermann, Paris, 2014.

\bibitem[Mayerhofer(2019)]{mayerhofer2017}
E.~Mayerhofer.
\newblock On {W}ishart and non-central {W}ishart distributions on symmetric
  cones.
\newblock \emph{Transactions of the American Mathematical Society},
  371\penalty0 (10):\penalty0 7093--7109, 2019.

\bibitem[Mayerhofer et~al.(2011{\natexlab{a}})Mayerhofer, Muhle-Karbe, and
  Smirnov]{mayerhofermuhle}
E.~Mayerhofer, J.~Muhle-Karbe, and A.~G. Smirnov.
\newblock A characterization of the martingale property of exponentially affine
  processes.
\newblock \emph{Stochastic Processes and their Applications}, 121\penalty0
  (3):\penalty0 568--582, 2011{\natexlab{a}}.

\bibitem[Mayerhofer et~al.(2011{\natexlab{b}})Mayerhofer, Pfaffel, and
  Stelzer]{Mayerhofer2009}
E.~Mayerhofer, O.~Pfaffel, and R.~Stelzer.
\newblock On strong solutions for positive definite jump-diffusions.
\newblock \emph{Stochastic Processes and Their Applications}, 121:\penalty0
  2072--2086, 2011{\natexlab{b}}.

\bibitem[Meyn and Tweedie(1992)]{MT1}
S.~P. Meyn and R.~L. Tweedie.
\newblock Stability of {M}arkovian processes {I}: {C}riteria for discrete-time
  chains.
\newblock \emph{Advances in Applied Probability}, 24\penalty0 (3):\penalty0
  542--574, 1992.

\bibitem[Meyn and Tweedie(1993{\natexlab{a}})]{MT2}
S.~P. Meyn and R.~L. Tweedie.
\newblock Stability of {M}arkovian processes {II}: {C}ontinuous-time processes
  and sampled chains.
\newblock \emph{Advances in Applied Probability}, 25\penalty0 (3):\penalty0
  487--517, 1993{\natexlab{a}}.

\bibitem[Meyn and Tweedie(1993{\natexlab{b}})]{MT3}
S.~P. Meyn and R.~L. Tweedie.
\newblock Stability of {M}arkovian processes {III}: {F}oster-{L}yapunov
  criteria for continuous-time processes.
\newblock \emph{Advances in Applied Probability}, 25\penalty0 (3):\penalty0
  518--548, 1993{\natexlab{b}}.

\bibitem[Meyn and Tweedie(2009)]{MT}
S.~P. Meyn and R.~L. Tweedie.
\newblock \emph{Markov chains and stochastic stability}.
\newblock Cambridge University Press, 2009.

\bibitem[Muhle-Karbe et~al.(2011)Muhle-Karbe, Pfaffel, and
  Stelzer]{MuhleKarbePfaffelStelzer2009}
J.~Muhle-Karbe, O.~Pfaffel, and R.~Stelzer.
\newblock Option pricing in multivariate stochastic volatility models of {OU}
  type.
\newblock \emph{SIAM Journal on Financial Mathematics}, 3:\penalty0 66--94,
  2011.

\bibitem[Pigorsch and Stelzer(2009)]{PigorschetStelzer2007b}
C.~Pigorsch and R.~Stelzer.
\newblock On the definition, stationary distribution and second order structure
  of positive semi-definite {Ornstein-Uhlenbeck} type processes.
\newblock \emph{Bernoulli}, 15:\penalty0 754--773, 2009.

\bibitem[Rosenblatt(1984)]{rosenblatt1984}
M.~Rosenblatt.
\newblock Asymptotic normality, strong mixing and spectral density estimates.
\newblock \emph{The Annals of Probability}, 12\penalty0 (4):\penalty0
  1167--1180, 1984.

\bibitem[Sato and Yamazato(1984)]{satoyamazato1984}
K.~Sato and M.~Yamazato.
\newblock Operator-selfdecomposable distributions as limit distributions of
  processes of {Ornstein-Uhlenbeck} type.
\newblock \emph{Stochastic Processes and Their Applications}, 17:\penalty0
  73--100, 1984.

\bibitem[Schlemm and Stelzer(2012)]{Schlemm:Stelzer:2010b}
E.~Schlemm and R.~Stelzer.
\newblock Quasi maximum likelihood estimation for multivariate {L}\'evy-driven
  {CARMA} processes.
\newblock \emph{Electronic Journal of Statistics}, 6:\penalty0 2185--2234,
  2012.

\bibitem[Simon(2011)]{Simon2011}
T.~Simon.
\newblock On the absolute continuity of multidimensional {O}rnstein-{U}hlenbeck
  processes.
\newblock \emph{Probability Theory and Related Fields}, 151\penalty0
  (1-2):\penalty0 173--190, 2011.

\bibitem[Spreij and Veerman(2012)]{SpreijVeerman2012}
P.~Spreij and E.~Veerman.
\newblock Affine diffusions with non-canonical state space.
\newblock \emph{Stochastic Analysis and Applications}, 30\penalty0
  (4):\penalty0 605--641, 2012.

\bibitem[Stelzer and Vestweber(2019)]{StelzerVestweber2017}
R.~Stelzer and J.~Vestweber.
\newblock Geometric ergodicity of the multivariate {COGARCH(1,1)} process.
\newblock Technical report, 2019.
\newblock arXiv eprint 1701.07859.

\bibitem[Tuominen and Tweedie(1979)]{TT1979}
P.~Tuominen and R.~L. Tweedie.
\newblock Exponential decay and ergodicity of general {M}arkov processes and
  their discrete skeletons.
\newblock \emph{Advances in {A}pplied {P}robability}, 11\penalty0 (4):\penalty0
  784--803, 1979.

\bibitem[Verhulst(1990)]{Verhulst1990}
F.~Verhulst.
\newblock \emph{Nonlinear Differential Equations and Dynamical Systems}.
\newblock Springer, Berlin, 1990.

\bibitem[Volkmann(1973)]{volkmann1973invarianz}
P.~Volkmann.
\newblock {\"U}ber die invarianz konvexer mengen und differentialungleichungen
  in einem normierten raume.
\newblock \emph{Mathematische Annalen}, 203\penalty0 (3):\penalty0 201--210,
  1973.

\bibitem[{Zhang} and {Glynn}(2018)]{ZhangGlynn2018}
X.~{Zhang} and P.~W. {Glynn}.
\newblock Affine jump-diffusions: Stochastic stability and limit theorems.
\newblock Technical report, Oct. 2018.
\newblock ArXiv e-print 1811.00122.

\end{thebibliography}
\providecommand{\MR}[1]{}

\end{document}